\theoremstyle{plain}
\newtheorem{theorem}{Theorem}[section]
\newtheorem{corollary}[theorem]{Corollary}
\newtheorem{lemma}[theorem]{Lemma}
\newtheorem{proposition}[theorem]{Proposition}
\theoremstyle{definition}
\newtheorem{definition}[theorem]{Definition}
\newtheorem{example}[theorem]{Example}
\theoremstyle{remark}
\newtheorem{remark}[theorem]{Remark}
\newcommand{\A}{\mathcal{A}}
\newcommand{\scB}{\mathcal{B}}
\newcommand{\C}{\mathbb{C}}
\newcommand{\F}{\mathbb{F}}
\newcommand{\Q}{\mathbb{Q}}
\newcommand{\R}{\mathbb{R}}
\newcommand{\scS}{\mathcal{S}}
\newcommand{\scT}{\mathcal{T}}
\newcommand{\Z}{\mathbb{Z}}
\newcommand{\M}{\mathcal{M}}
\newcommand{\Ker}{\operatorname{Ker}}
\newcommand{\lcm}{\operatorname{lcm}}
\newcommand{\Hom}{\operatorname{Hom}}
\newcommand{\quasi}{\operatorname{quasi}}
\newcommand{\semi}{\operatorname{semi}}
\renewcommand{\top}{\operatorname{top}}
\newcommand{\arith}{\operatorname{arith}}
\newcommand{\free}{\operatorname{free}}
\newcommand{\tor}{\operatorname{tor}}
\newcommand{\Tor}{\operatorname{Tor}}
\newcommand{\rank}{\operatorname{rank}}
\newcommand{\torus}{\operatorname{torus}}
\newcommand{\mer}{\operatorname{merid}}
\begin{document}

\title[$G$-Tutte]{$G$-Tutte polynomials and abelian Lie group arrangements}

\date{\today}

\begin{abstract}
We introduce and study the notion of the $G$-Tutte polynomial for 
a list $\mathcal{A}$ of elements in a finitely generated abelian group $\Gamma$ and 
an abelian group $G$, which is defined 
by counting 
the number of homomorphisms from associated finite abelian groups 
to $G$. 

The $G$-Tutte polynomial is a common generalization of the (arithmetic) 
Tutte polynomial for realizable (arithmetic) matroids, 
the characteristic quasi-polynomial for integral arrangements, 
Br\"and\'en-Moci's arithmetic version of the partition function 
of an abelian group-valued Potts model, and 
the modified Tutte-Krushkal-Renhardy polynomial for a finite CW-complex. 

As in the classical case, $G$-Tutte polynomials carry 
topological and enumerative information (e.g., the Euler characteristic, 
point counting and the Poincar\'e polynomial) 
of abelian Lie group arrangements. 

We also discuss differences between the arithmetic Tutte and the 
$G$-Tutte polynomials related to the axioms for arithmetic matroids and the 
(non-)positivity of coefficients. 
\end{abstract}

\author{Ye Liu}
\address{Ye Liu, Department of Mathematics, Hokkaido University, Kita 10, Nishi 8, Kita-Ku, Sapporo 060-0810, Japan.}
\email{liu@math.sci.hokudai.ac.jp}
\author{Tan Nhat Tran}
\address{Tan Nhat Tran, Department of Mathematics, Hokkaido University, Kita 10, Nishi 8, Kita-Ku, Sapporo 060-0810, Japan.}
\email{trannhattan@math.sci.hokudai.ac.jp}
\author{Masahiko Yoshinaga}
\address{Masahiko Yoshinaga, Department of Mathematics, Faculty of Science, Hokkaido University, Kita 10, Nishi 8, Kita-Ku, Sapporo 060-0810, Japan.}
\email{yoshinaga@math.sci.hokudai.ac.jp}

%\dedicatory{}

%\subjclass[2010]{Primary 52C35, Secondary 20F55}
%%%Primary 14C21,14F99, 32S22 ; Secondary 14E05, 14H50.}
\keywords{Tutte polynomial, characteristic quasi-polynomial, Poincare polynomial, arithmetic matroids}

\date{\today}
\maketitle

\tableofcontents

\section{Introduction}
\label{sec:intro}

The Tutte polynomial is one of the most important invariants 
of a graph. The significance of the Tutte polynomial 
is that it has several important specializations, including 
chromatic polynomials, partition functions of Potts models (\cite{sokal}), 
and Jones polynomials for alternating links (\cite{thi}). 
Another noteworthy aspect of the Tutte polynomial 
is that it depends only on the (graphical) matroid structure, 
and thus one can define the Tutte polynomial for a matroid. 
Matroids and (specializations of) Tutte polynomials play a role 
in several diverse areas of mathematics (\cite{oxley, welsh}). 

Matroids and Tutte polynomials 
are particularly important in the study 
of hyperplane arrangements (\cite{ot}), 
because the Tutte polynomial and one of its specializations, 
the characteristic polynomial, 
carry enumerative and topological information about the 
arrangement. For instance, the number of points over a finite field, 
the number of chambers for a real arrangement and the Betti numbers 
for a complex arrangement are all obtained from the characteristic polynomial. 

%In the context of hyperplane arrangements, 
%matroids are considered to be the data that 
%encode the pattern of intersecting hyperplanes. 
%It should be noted that the isomorphism class of a subspace is 
%determined by its dimension (or codimension), or equivalently, 
%by the rank function in matroid theory. 
%This is the reason that matroids are extremely powerful in the 
%study of hyperplane arrangements. 

It is natural to consider arrangements of subsets of other types. 
Recently, arrangements of subtori in a torus, or so-called 
toric arrangements, have received considerable attention 
(\cite{dec-pro}), 
which has origin in the study of the moduli space of curves (\cite{loo-coh}) 
and regular semisimple elements in an algebraic 
group (\cite{leh-tor}). 

%However, beyond linear subspaces, the notion of rank is 
%no longer sufficient to determine the isomorphism class of intersections 
%of an arrangement. 
%We need additional structure to describe intersection patterns 
%combinatorially. 

The notions of arithmetic Tutte polynomials and arithmetic matroids invented 
by Moci and collaborators 
(\cite{moci-tor, dad-mo, br-mo, fink-moci}) are particularly useful 
for studying toric arrangements. 
As in the case of hyperplane arrangements, arithmetic 
Tutte polynomials carry enumerative and topological information about 
toric arrangements. It is generally difficult to explicitly compute 
the arithmetic Tutte polynomial. 
Arithmetic Tutte polynomials for classical root systems were computed by 
Ardila, Castillo and Henley (\cite{ar-ca-he}). 

Another (quasi-)polynomial invariant for a hyperplane 
arrangement defined over integers, the characteristic quasi-polynomial 
introduced by Kamiya, Takemura and Terao \cite{ktt-cent}, is 
a refinement of the characteristic polynomial of an arrangement. 
The notion of the characteristic quasi-polynomial is closely related to 
Ehrhart theory on counting lattice points, and has increased in combinatorial 
importance recently. 
The characteristic quasi-polynomial for root systems was essentially 
computed by Suter \cite{sut} (see also \cite{ktt-quasi}). 
By comparing the computations of Suter with those 
of Ardila, Castillo and Henley, 
it has been observed that the most degenerate constituent of 
the characteristic quasi-polynomial is a specialization 
of the arithmetic Tutte polynomial. 

The purpose of this paper is to introduce and study a new class of 
polynomial invariant that forms a common generalization of the 
Tutte, arithmetic Tutte and characteristic quasi-polynomials, among others. 

The key observation to unify the above 
``Tutte-like polynomials'' is that they are all defined by means of counting 
homomorphisms between certain abelian groups (this formulation 
appeared in \cite[\S 7]{br-mo}). 
This observation has prompted us to introduce the notion of the 
\emph{$G$-Tutte polynomial} 
$T_{\A}^G(x, y)$ for a list of elements $\A$ in a finitely generated 
abelian group $\Gamma$ and an abelian group $G$ with a certain 
finiteness assumption on the torsion elements (see \S \ref{sec:GTutte} 
for details). We mainly consider abelian Lie groups 
$G$ of the form 
\begin{equation*}
G=F\times(S^1)^p\times \R^q, 
\end{equation*}
where $F$ is a finite abelian group and $p, q\geq 0$. 
Typical examples are $\C\simeq\R^2$ and 
$\C^\times\simeq S^1\times\R$. 

When the group $G$ is $\C$, $\C^\times$, or the finite cyclic group 
$\Z/k\Z$, 
the $G$-Tutte polynomial is precisely the Tutte polynomial, 
the arithmetic Tutte polynomial, or a constituent of the characteristic 
quasi-polynomial, respectively. 
We will see that many known properties (deletion-contraction formula, 
Euler characteristic of the complement, 
point counting, Poincar\'e polynomial, convolution formula) 
for (arithmetic) Tutte polynomials are shared by $G$-Tutte polynomials. 
(See \cite{de-ri} for another attempt to generalize arithmetic 
Tutte polynomials.) 

The organization of this paper is as follows. 

\S \ref{sec:background} gives a summary of background material. 
We recall definitions of the 
Tutte polynomial $T_{\A}(x, y)$, arithmetic Tutte polynomial 
$T_{\A}^{\arith}(x, y)$ and the characteristic quasi-polynomial 
$\chi_{\A}^{\quasi}(q)$ for a given list of elements $\A$ in 
$\Gamma=\Z^\ell$. 

As pointed out by D'Adderio-Moci \cite{dad-mo}, it is more convenient to 
consider a list $\A$ in a finitely generated abelian group $\Gamma$. 
Following their ideas, in \S \ref{sec:arr} we define 
arrangements $\A(G)$ of subgroups in $\Hom(\Gamma, G)$ 
and its complements $\M(\A; \Gamma, G)$ for arbitrary 
abelian group $G$. 
We see that the set-theoretic deletion-contraction formula holds. 

In \S \ref{sec:GTutte}, the $G$-Tutte polynomial $T_{\A}^G(x, y)$ 
is defined using the number of homomorphisms 
of certain finite abelian groups to $G$ (the $G$-multiplicities). 
We also define the multivariate version $Z_{\A}^G(q, \bm{v})$ and the 
$G$-characteristic polynomial $\chi_{\A}^G(t)$. 
We then show that the $G$-Tutte polynomial satisfies the 
deletion-contraction formula . We also see that the $G$-Tutte polynomial 
has several specializations. 

In \S \ref{sec:eulercount}, we show that the Euler characteristic 
$e(\M(\A; \Gamma, G))$ of the complement can be computed as a special value of the 
$G$-Tutte polynomial (or $G$-characteristic polynomial) when $G$ is 
an abelian Lie group with finitely many components. 
As a special case, when $G$ is finite, we obtain a formula that counts 
the cardinality $\#\M(\A; \Gamma, G)$. The equality between 
the arithmetic characteristic polynomial and the most degenerate 
constituent of the characteristic quasi-polynomial is also proved. 

In \S \ref{sec:example} we compute the Poincar\'e polynomial for 
toric arrangements associated with root systems (considering positive roots 
to be a list in the root lattice). 
Applying recent results on characteristic 
quasi-polynomials, we show that the Poincar\'e polynomial satisfies a certain 
self-duality when the root system differs from $E_7, E_8$. 
We also recover Moci's results on Euler characteristics 
\cite[Corollary 7.3]{moci-tor}. 

In \S \ref{sec:poinc}, we prove a formula that expresses 
the Poincar\'e polynomial 
of $\M(\A; \Gamma, G)$ in terms of $G$-characteristic polynomials under the assumption 
that $G$ is a non-compact abelian Lie group with finitely many connected components. 
This formula covers several classical results, including 
hyperplane arrangements (Orlik-Solomon \cite{os} and 
Zaslavsky \cite{zas-face}), 
certain subspace arrangements 
(Goresky-MacPherson \cite{gor-mac}, Bj\"orner \cite{bjo}) and toric 
arrangements 
(De Concini-Procesi \cite{dec-pro}, Moci \cite{moci-tor}). 

If $G=S^1$ or $\C^\times$, then the $G$-multiplicities satisfy the 
five axioms of arithmetic matroids given in \cite{dad-mo}. 
A natural question to ask is whether the 
$G$-multiplicities satisfy these axioms for general groups $G$. 
In \S \ref{sec:matroid}, we show that four of the five axioms 
are satisfied by the $G$-multiplicities. 
However, one of the 
axioms is not necessarily satisfied. We also present a counter-example that 
does not satisfy this axiom. 
However, 
we prove that the $G$-multiplicity function 
satisfies another important formula, 
the so-called convolution formula, which has been a formula 
of interest recently \cite{ba-le, de-mo}. 
Unlike the cases of arithmetic Tutte polynomials, the coefficients 
of $G$-Tutte polynomials are not necessarily positive. We show this 
with an example. 

For the purpose of giving a combinatorial framework that 
describes intersection patterns of arrangements 
over an abelian Lie group $G$, 
an interesting problem would be to axiomatize $G$-multiplicities, 
which is left for future research. 

\medskip

\noindent
\emph{Conventions}: 
In this paper, the term {\bf list} is synonymous with multiset. 
We follow the convention in \cite[\S 2.1]{dad-mo}. For example, 
the list $\A=\{\alpha, \alpha\}$ has $4$ distinct sublists: 
$\scS_1=\emptyset, 
\scS_2=\{\alpha\},  
\scS_3=\{\alpha\},  
\scS_4=\{\alpha, \alpha\}=\A$. We distinguish $\scS_2$ and $\scS_3$, 
and hence $\A\smallsetminus\scS_2=\scS_3$. 
If $\A$ is a list, then $\scS\subset\A$ indicates 
that $\scS$ is a sublist of $\A$. 

(In \S \ref{subsec:finiteGamma} and \S \ref{sec:matroid}) 
A dot under a letter 
indicates the parameter in the summation. 
For instance, $\sum_{\scS\subset\underset{\bullet}{\scB}\subset\scT}$ 
indicates that 
$\scS$ and $\scT$ are fixed, and $\scB$ is running between them.

\section{Background}

\label{sec:background}

In this section, we recall the definitions of Tutte polynomials, 
arithmetic Tutte polynomials, 
characteristic quasi-polynomials and related results.

\subsection{(Arithmetic) Tutte polynomials}

\label{subsec:arithmtutte}

Let $\A=\{\alpha_1, \dots, \alpha_n\}\subset \Z^\ell$ be a list of 
integer vectors, let $\alpha_i=(a_{i1}, \dots, a_{i\ell})$. 
We may consider $\alpha_i$ to be a linear form defined by 
\begin{equation*}
\alpha_i(x_1, \dots, x_\ell)=a_{i1}x_1+\cdots +a_{i\ell}x_\ell. 
\end{equation*}
A sublist $\scS\subset\A$ determines a homomorphism 
$\alpha_\scS:\Z^\ell\longrightarrow\Z^{\#\scS}$. 

Let $G$ be an abelian group. 
Define the subgroup $H_{\alpha_i, G}$ of $G^\ell$ by 
\begin{equation*}
H_{\alpha_i, G}=\Ker(\alpha_i\otimes G: G^\ell\longrightarrow G). 
\end{equation*}
The list $\A$ determines an arrangement 
$\A(G)=\{H_{\alpha, G}\mid\alpha\in\A\}$ 
of subgroups in $G^\ell$. 
Denote their complement by 
\begin{equation*}
\M(\A; \Z^\ell, G):=G^\ell\smallsetminus
\bigcup_{\alpha_i\in\A}H_{\alpha_i, G}. 
\end{equation*}
The arrangement $\A(G)$ of subgroups and its complement 
$\M(\A; \Z^\ell, G)$ are important objects of study in many contexts. 
We list some of these below. 
\begin{itemize}
\item[(i)] 
When $G$ is the additive group of a field (e.g., $G=\C, \R, \F_q$), 
$\A(G)$ is the associated hyperplane arrangement (\cite{ot}). 
\item[(ii)] 
When $G=\R^c$ with $c>0$, $\A(G)$ is called the $c$-plexification of 
$\A$ (see \cite[\S 5.2]{bjo}). 
\item[(iii)] 
When $G$ is $\C^\times$ or $S^1$, $\A(G)$ is called a toric arrangement. 
\item[(iv)] 
When $G=S^1\times S^1$ (viewed as an elliptic curve), $\A(G)$ is called an 
elliptic (or abelian) arrangement. (\cite{bibby}). 
\item[(v)] 
When $G$ is a finite cyclic group $\Z/q\Z$, $\A(G)$ is related to the 
characteristic quasi-polynomial studied in 
\cite{ktt-cent, ktt-quasi} (see \ref{subsec:cqp}). 
There is also an important connection with Ehrhart theory and enumerative 
problems (\cite{bl-sa, yos-wor, yos-linial}). 
\end{itemize}
To define the arithmetic Tutte polynomial, we need further notation. 
The linear map $\alpha_\scS$ is expressed by 
the matrix $M_{\scS}=(a_{ij})_{i\in\scS, 1\leq j\leq\ell}$ of size 
$\#\scS\times \ell$. 
Denote by $r_\scS$ the rank of $M_{\scS}$. 
Suppose that the Smith normal form of $M_{\scS}$ is 
\begin{equation*}
%\label{eq:smith}
\begin{pmatrix}
d_{\scS, 1}&0&\cdots&0&\cdots&\cdots&0\\
0&d_{\scS, 2}& &\vdots & & &\vdots\\
\vdots& &\ddots&0& & & \\
0&\cdots&0& d_{\scS, r_\scS}&&&\\
\vdots& & & &0&& \\
\vdots& & & & &\ddots&\vdots\\
0&\cdots& & & &\cdots&0
\end{pmatrix}, 
\end{equation*}
where $1\leq d_{\scS, i}$ is a positive integer 
and $d_{\scS, i}$ divides $d_{\scS, i+1}$. 

The Tutte polynomial $T_{\A}(x, y)$ and the 
arithmetic Tutte polynomial $T_{\A}^{\arith}(x, y)$ 
of $\A$ are defined as follows (\cite{moci-tor, br-mo}). 
\begin{equation*}
\begin{split}
T_{\A}(x, y)&=\sum_{\scS\subset\A}(x-1)^{r_\A-r_\scS}(y-1)^{\#\scS-r_\scS}, \\
T_{\A}^{\arith}(x, y)&=\sum_{\scS\subset\A}m(\scS)(x-1)^{r_\A-r_\scS}
(y-1)^{\#\scS-r_\scS}, 
\end{split}
\end{equation*}
where $m(\scS)=\prod_{i=1}^{r_\scS}d_{\scS, i}$. 
($m(\scS)$ can also be defined as the cardinality of 
torsion subgroup of $\Z^\ell$ quotient by the subgroup generated by 
the row vectors of $M_{\scS}$. 
See also \S \ref{subsec:gtutte}). It should be noted that 
the arithmetic Tutte polynomial $T_{\A}^{\arith}(x, y)$ is defined for 
more general objects, arithmetic matroids \cite{dad-mo} and 
matroids over $\Z$ \cite{fink-moci}.

These polynomials encode combinatorial and topological information about 
the arrangements. For instance, the characteristic polynomial 
of the ranked poset of flats of the hyperplane arrangement is 
$\chi_\A(t)=(-1)^{r_\A}t^{\ell-r_\A}T_{\A}(1-t,0)$, and 
the Poincar\'e polynomial 
of $\M(\A; \Z^\ell, \R^c)$ is (\cite{gor-mac, bjo}) 
\begin{equation}
\label{eq:bjorner}
P_{\M(\A; \Z^\ell, \R^c)}(t)=t^{r_\A\cdot (c-1)}\cdot T_\A\left(\frac{1+t}{t^{c-1}}, 
0\right). 
\end{equation}
Note that the special cases $c=1$ and $c=2$ reduce to the famous formulas 
given by 
Zaslavsky \cite{zas-face} and Orlik-Solomon \cite{os}, respectively. 
Similarly, as proved by De Concini-Procesi \cite{dec-pro} and Moci 
\cite{moci-tor}, 
the characteristic polynomial of the layers (connected components of 
intersections) of the corresponding toric arrangement is 
$\chi_\A^{\arith}(t)=(-1)^{r_\A}t^{\ell-r_\A}T_{\A}^{\arith}(1-t,0)$, and 
the Poincar\'e polynomial of $\M(\A; \Z^\ell, \C^\times)$ is 
\begin{equation}
\label{eq:torpoin}
P_{\M(\A; \Z^\ell, \C^\times)}(t)=
(1+t)^{\ell-r_\A}\cdot 
t^{r_\A}\cdot T_\A^{\arith}\left(\frac{1+2t}{t}, 0\right). 
\end{equation}
The cohomology ring structure of $\M(\A; \Z^\ell, \C^\times)$ was recently described in \cite{cddmp}.

Contrary to the above cases, 
Bibby \cite[Remark 4.4]{bibby} pointed out that 
when $G=S^1\times S^1$, 
a similar formula for the Poincar\'e polynomial 
does not hold. 
We will see that the non-compactness of $G$ plays an important role in 
Poincar\'e polynomial formulas (Theorem \ref{thm:poin} and 
Remark \ref{rem:noncpt}). 

\subsection{Characteristic quasi-polynomials}
\label{subsec:cqp}

In \cite{ktt-cent}, 
Kamiya, Takemura and Terao proved that 
$\#\M(\A; \Z^\ell, \Z/q\Z)$ is a quasi-polynomial in $q$ ($q\in\Z_{>0}$), 
denoted by $\chi^{\quasi}_{\A}(q)$, with period 
\begin{equation*}
%\label{eq:rhoA}
\rho_\A:=\lcm (d_{\scS, r_{\scS}}\mid \scS\subset\A). 
\end{equation*}
More precisely, there exist polynomials 
$f_1(t), f_2(t), \cdots, f_{\rho_\A}(t)\in\Z[t]$ such that 
for any positive integer $q$, 
\begin{equation*}
\chi^{\quasi}_{\A}(q):=\#\M(\A; \Z^\ell, \Z/q\Z) =f_k(q), 
\end{equation*}
where $k\equiv q\mod \rho_\A$. 
The polynomial $f_k(t)$ is called the $k$-constituent. 
They also proved that $f_k(t)=f_m(t)$ if $\gcd(k, \rho_\A)=
\gcd(m, \rho_\A)$. 
Furthermore, the $1$-constituent $f_1(t)$ (and more generally, 
$f_k(t)$ with $\gcd(k, \rho_\A)=1$) is known to be equal to the 
characteristic polynomial $\chi_\A(t)$ 
(\cite{ath-adv}). 

We will show that the most degenerate constituent $f_{\rho_\A}(t)$ 
is obtained as a specialization of the arithmetic Tutte polynomial, 
and that the other constituents can also be described in terms of 
the $G$-Tutte polynomials introduced later (Theorem \ref{thm:cqp}, 
Corollary \ref{cor:toricchar}). 

\section{Arrangements over abelian groups and deletion-contraction} 

\label{sec:arr}

Let $\Gamma$ be a finitely generated abelian group, 
$\A=\{\alpha_1, \dots, \alpha_n\}\subset\Gamma$ a list 
(multiset) of finitely many elements, 
and $G$ an arbitrary abelian group 
with the unit $e\in G$. 
In this section, 
we define arrangements over $G$, and prove the 
set-theoretic deletion-contraction formula 
by generalizing an idea of D'Adderio-Moci \cite[\S 3.2]{dad-mo}. 

\subsection{$G$-plexification}

\label{subsec:G-plexif}

Let us denote the subgroup of torsion elements of $\Gamma$ by 
$\Gamma_{\tor}\subset\Gamma$, and 
the rank of $\Gamma$ by $r_\Gamma$. 
More generally, for a 
list 
$\scS\subset\Gamma$, denote the rank 

of the subgroup $\langle\scS\rangle\subset\Gamma$ 
generated by $\scS$ by 
\begin{equation*}
r_\scS=\rank \langle\scS\rangle. 
\end{equation*}

We now define the ``arrangement'' associated with a list $\A$ over 
an arbitrary abelian group $G$. 
The total space is the abelian group 
\begin{equation*}
\Hom(\Gamma, G)=\{\varphi :\Gamma\longrightarrow G\mid\mbox{ $\varphi$ is a 
homomorphism}\} 
\end{equation*}
of all homomorphisms from $\Gamma$ to $G$. The element $\alpha\in\Gamma$ 
naturally determines a homomorphism 
\begin{equation*}
\alpha:\Hom(\Gamma, G)\longrightarrow G, \varphi\longmapsto\varphi(\alpha), 
\end{equation*}
with the kernel 
\begin{equation*}
H_{\alpha, G}:=\{\varphi \in\Hom(\Gamma, G)\mid \varphi (\alpha)= e\}. 
\end{equation*}
The collection of subgroups $\A(G)=\{H_{\alpha, G}\mid\alpha\in\A\}$ is 
called the $G$-plexification of $\A$. Denote the complement of $\A(G)$ by 
\begin{equation*}
\M(\A; \Gamma, G):=\Hom(\Gamma, G)\smallsetminus\bigcup_{\alpha\in\A}H_{\alpha, G}. 
\end{equation*}

\begin{example}
(1) 
Suppose that $\Gamma=\Z^\ell$. 
Then $\Hom(\Gamma, G)\simeq G^\ell$. 

(2) 
Suppose that $\Gamma=\Z/d\Z$. 
Then 
\begin{equation*}
\Hom(\Gamma, G)\simeq G[d]:=\{x\in G\mid d\cdot x=e\}
\end{equation*}
is the subgroup of $d$-torsion points. 
\end{example}

\begin{example}
\label{ex:smallex}
Let $\Gamma=\Z\oplus\Z/4\Z$ and $G=\C^\times$. Then 
$\Hom(\Gamma, G)\simeq \C^\times\times \{\pm 1, \pm i\}$, 
which is a (real $2$-dimensional) 
Lie group with $4$ connected components. 
If $\alpha_1=(2, 2)\in\Gamma$, then $H_{\alpha_1, G}=\{(\pm 1, \pm 1), 
(\pm i, \pm i)\}$ 
consists of $8$ points. 
If $\alpha_2=(0, 2)\in\Gamma$, then $H_{\alpha_2, G}=\C^\times\times\{\pm 1\}$ 
is a union of two copies of $\C^\times$. 
\end{example}

\begin{remark}
$G$-plexification can be considered as a generalization of 
complexification, $c$-plexification, toric arrangements and 
$\Z/q\Z$ reduction (see \S \ref{sec:background}). 
\end{remark}

\subsection{Set-theoretic deletion-contraction formula}
\label{subsec:delcont}

D'Adderio-Moci \cite{dad-mo} 
defined two other lists for a fixed $\alpha\in\A$, 
the deletion $\A'$ and the contraction $\A''=\A/\alpha$. 
The deletion is just $\A'=\A\smallsetminus\{\alpha\}$, a 
list of elements in the same group $\Gamma':=\Gamma$. 
To define $\A''$, 
let $\Gamma'':=\Gamma/\langle\alpha\rangle$, and 
$\A'':=\{\overline{\alpha'}\mid\alpha'\in\A'\}\subset
\Gamma''$. By the exact sequence 
\begin{equation*}
0
\longrightarrow\Hom(\Gamma'', G)
\longrightarrow\Hom(\Gamma, G)
\longrightarrow\Hom(\langle\alpha\rangle, G), 
\end{equation*}
the group $\Hom(\Gamma'', G)$ can be 
identified with 
\begin{equation*}
H_{\alpha, G}=
\{\varphi\in\Hom(\Gamma, G)\mid \varphi(\alpha)=e\}. 
\end{equation*}
Therefore, we can consider both 
$\M(\A; \Gamma, G)$ and $\M(\A''; \Gamma'', G)$ as subsets of $\Hom(\Gamma, G)$ 
(actually, the subsets of $\M(\A'; \Gamma', G)$). 
These three sets are related by the following deletion-contraction formula. 
\begin{proposition}
\label{prop:set}
Using the above identification, we have the following decomposition: 
\begin{equation*}
\M(\A'; \Gamma', G)=
\M(\A''; \Gamma'', G)\sqcup\M(\A; \Gamma, G). 
\end{equation*}
\end{proposition}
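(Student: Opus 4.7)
The plan is to show both inclusions $\supseteq$ and $\subseteq$ after first checking disjointness on the right-hand side, treating the decomposition as a bookkeeping exercise about whether or not $\varphi(\alpha)=0$.

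First I would record the identification coming from the exact sequence displayed just before the proposition: the isomorphism $\Hom(\Gamma'', G)\cong H_{\alpha, G}$ is induced by precomposition with the quotient $\pi:\Gamma\to\Gamma''$, so a homomorphism $\varphi:\Gamma\to G$ vanishing on $\alpha$ corresponds to $\overline{\varphi}:\Gamma''\to G$ with $\overline{\varphi}\circ\pi=\varphi$. Under this identification, for any $\alpha'\in\A'$ one has the key compatibility $\overline{\varphi}(\overline{\alpha'})=\varphi(\alpha')$, since $\overline{\alpha'}=\pi(\alpha')$. This is the only ingredient of substance.

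Next I would establish the two inclusions $\M(\A;\Gamma,G)\subset\M(\A';\Gamma',G)$ and $\M(\A'';\Gamma'',G)\subset\M(\A';\Gamma',G)$. The first is immediate because $\A'\subset\A$ as lists, so the union defining the complement for $\A'$ is contained in the union defining the complement for $\A$. For the second, a point $\overline{\varphi}\in\M(\A'';\Gamma'',G)$ corresponds to a $\varphi\in H_{\alpha,G}\subset\Hom(\Gamma,G)$ with $\varphi(\alpha')=\overline{\varphi}(\overline{\alpha'})\neq 0$ for every $\alpha'\in\A'$, which is exactly the defining condition of $\M(\A';\Gamma',G)$. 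Disjointness is then automatic: members of $\M(\A;\Gamma,G)$ satisfy $\varphi(\alpha)\neq 0$, while members of $\M(\A'';\Gamma'',G)$ sit inside $H_{\alpha,G}$ and so satisfy $\varphi(\alpha)=0$.

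Finally, for the reverse inclusion I would take any $\varphi\in\M(\A';\Gamma',G)$ and split into cases. If $\varphi(\alpha)\neq 0$, then combined with $\varphi(\alpha')\neq 0$ for all $\alpha'\in\A'$ this gives $\varphi\in\M(\A;\Gamma,G)$. Otherwise $\varphi(\alpha)=0$, so $\varphi\in H_{\alpha,G}$, it factors as $\overline{\varphi}\circ\pi$, and the compatibility $\overline{\varphi}(\overline{\alpha'})=\varphi(\alpha')\neq 0$ places $\overline{\varphi}$ in $\M(\A'';\Gamma'',G)$.

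There is no real obstacle here; the only thing that requires care is keeping the identification $H_{\alpha,G}\cong\Hom(\Gamma'',G)$ consistent so that the condition ``$\varphi(\alpha')\neq 0$'' for the original homomorphism matches the condition ``$\overline{\varphi}(\overline{\alpha'})\neq 0$'' for the induced one on the quotient. Once that is in place, the proposition is a one-line case distinction on the value $\varphi(\alpha)$.
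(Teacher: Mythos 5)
Your proof is correct and follows essentially the same approach as the paper: partition $\M(\A';\Gamma',G)$ according to whether $\varphi(\alpha)=0$ or $\varphi(\alpha)\neq 0$, and identify the two pieces with $\M(\A'';\Gamma'',G)$ and $\M(\A;\Gamma,G)$ respectively. You simply spell out the identification $H_{\alpha,G}\cong\Hom(\Gamma'',G)$ and the compatibility $\overline{\varphi}(\overline{\alpha'})=\varphi(\alpha')$ more explicitly than the paper does.
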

\begin{proof}
The set $\M(\A'; \Gamma', G)$ can be decomposed as 
$\{\varphi\in\M(\A'; \Gamma', G)\mid \varphi(\alpha)=e\}\sqcup
\{\varphi\in\M(\A'; \Gamma', G)\mid \varphi(\alpha)\neq e\}$. 
The first term on the right-hand side can be 
identified with 
$\{\varphi
\in\M(\A'; \Gamma', G) 
\mid\varphi(\alpha)=e\}\simeq\M(\A''; \Gamma'', G)$, and 
the second term is equal to $\M(\A; \Gamma, G)$. 
\end{proof}

More generally, 
for any sublist $\scS\subset\A$, we can define the contraction 
$\A/\scS$ as the list of cosets $\{\overline{\alpha}\mid 
\alpha\in\A\smallsetminus\scS\}$ in the group $\Gamma/\langle\scS\rangle$. 
Because $\Hom(\Gamma/\langle\scS\rangle, G)$ can be naturally identified with 
the set $\{\varphi\in\Hom(\Gamma, G)\mid \varphi(\alpha)=e, 
\forall \alpha\in\scS\}$, it can be seen as a subset of 
$\Hom(\Gamma, G)$. 
Under this identification, we can describe $\M(\A/\scS; \Gamma/\langle\scS\rangle, G)$ as 
\begin{equation*}
\M(\A/\scS; \Gamma/\langle\scS\rangle, G)=
\left\{
\varphi\in\Hom(\Gamma, G)
\left|
\begin{array}{cc}
\varphi(\alpha)=e, &\mbox{ for }\alpha\in\scS\\
\varphi(\alpha)\neq e, &\mbox{ for }\alpha\in\A\smallsetminus\scS
\end{array}
\right.
\right\}.
\end{equation*}
It is easily seen that for any $\varphi\in\Hom(\Gamma, G)$, 
$\scS=\A_{\varphi}:=\{\alpha\in\A\mid\varphi(\alpha)=e\}$ is the unique 
sublist $\scS\subset\A$ that satisfies $\varphi\in\M(\A/\scS; \Gamma/\langle\scS\rangle, G)$. 
This yields the following. 
\begin{proposition}
\label{prop:disjoint}
Let $\Gamma$ be a finitely generated abelian group, $\A$ be a 
finite 
list of elements in $\Gamma$, and $G$ be an abelian group. 
Then 
\begin{equation*}
\Hom(\Gamma, G)=\bigsqcup_{\scS\subset\A}\M(\A/\scS; \Gamma/\langle\scS\rangle, G). 
\end{equation*}
\end{proposition}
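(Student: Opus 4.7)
The plan is to use the labeling map $\varphi \mapsto \A_\varphi := \{\alpha \in \A \mid \varphi(\alpha) = 0\}$ that is already highlighted in the paragraph preceding the proposition. For each $\varphi \in \Hom(\Gamma, G)$, I would show that $\scS = \A_\varphi$ is the unique sublist for which $\varphi$ lies in the corresponding piece $\M(\A/\scS; \Gamma/\langle\scS\rangle, G)$. This simultaneously gives coverage (every $\varphi$ lies in some piece) and disjointness (no $\varphi$ lies in two pieces).

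First I would verify membership. Setting $\scS = \A_\varphi$, the homomorphism $\varphi$ vanishes on $\langle\scS\rangle$ by construction, so it descends to a homomorphism $\overline{\varphi} \in \Hom(\Gamma/\langle\scS\rangle, G)$, and under the identification fixed in Section \ref{subsec:delcont} this $\overline{\varphi}$ corresponds to $\varphi$ itself viewed in the subset $\{\psi \in \Hom(\Gamma, G) \mid \psi(\alpha) = 0, \forall \alpha \in \scS\}$. Moreover, by the very definition of $\A_\varphi$, we have $\varphi(\alpha) \neq 0$ for every $\alpha \in \A \smallsetminus \scS$. Comparing with the explicit description of $\M(\A/\scS; \Gamma/\langle\scS\rangle, G)$ given just before the proposition, this places $\varphi$ inside that piece.

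Next I would argue uniqueness. Suppose $\varphi \in \M(\A/\scS; \Gamma/\langle\scS\rangle, G)$ for some $\scS \subset \A$. The two defining conditions, $\varphi(\alpha) = 0$ for $\alpha \in \scS$ and $\varphi(\alpha) \neq 0$ for $\alpha \in \A \smallsetminus \scS$, immediately give the two inclusions $\scS \subset \A_\varphi$ and $\A_\varphi \subset \scS$ respectively, so $\scS = \A_\varphi$. Combined with the previous step, this shows the pieces are pairwise disjoint and cover $\Hom(\Gamma, G)$.

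The only bit that deserves care is the multiset convention: since $\A$ is a list, $\A_\varphi$ must include every copy of every element mapped to $0$ by $\varphi$, and the statement $\scS \subset \A$ must be interpreted as the sublist convention fixed in the Conventions paragraph. This is purely bookkeeping rather than a real obstacle; once the identification $\Hom(\Gamma/\langle\scS\rangle, G) \hookrightarrow \Hom(\Gamma, G)$ from Section \ref{subsec:delcont} is in place, the proof reduces to the two-line argument above.
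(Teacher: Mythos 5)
Your proposal is correct and is essentially the same argument the paper gestures at: the paper states (in the sentence right before the proposition) that $\scS=\A_\varphi$ is the unique sublist with $\varphi\in\M(\A/\scS;\Gamma/\langle\scS\rangle,G)$, and you simply spell out the membership and uniqueness checks implicit in that claim.
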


The structure of the 
intersection of the $H_{\alpha, G}$ is described by the next proposition. 

\begin{proposition}
\label{prop:intersect}
Let $\A$ be a finite list of elements in a finitely generated 
abelian group $\Gamma$. Then 
\begin{equation*}
\begin{split}
\bigcap_{\alpha\in\A}H_{\alpha, G}
&\simeq
\Hom(\Gamma/\langle\A\rangle, G)
\\
&\simeq
\Hom((\Gamma/\langle\A\rangle)_{\tor}, G)\times
G^{r_{\Gamma}-r_{\A}}. 
\end{split}
\end{equation*}
\end{proposition}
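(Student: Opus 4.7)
The plan is to split the statement into its two isomorphisms and establish each in turn using standard tools: the universal property of quotients for the first, and the structure theorem for finitely generated abelian groups for the second.

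For the first isomorphism, I would start from the definitions. A homomorphism $\varphi \in \Hom(\Gamma, G)$ lies in $\bigcap_{\alpha \in \A} H_{\alpha, G}$ exactly when $\varphi(\alpha) = 0$ for every $\alpha \in \A$, which is the same as saying $\varphi$ vanishes on the subgroup $\langle \A \rangle \subset \Gamma$. By the universal property of the quotient $\Gamma \twoheadrightarrow \Gamma/\langle\A\rangle$, such homomorphisms are in natural bijection with homomorphisms $\Gamma/\langle\A\rangle \to G$, yielding the isomorphism $\bigcap_{\alpha \in \A} H_{\alpha, G} \simeq \Hom(\Gamma/\langle\A\rangle, G)$.

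For the second isomorphism, I would apply the structure theorem to the finitely generated abelian group $\Gamma/\langle\A\rangle$ to obtain a (non-canonical) decomposition
\begin{equation*}
\Gamma/\langle\A\rangle \simeq (\Gamma/\langle\A\rangle)_{\tor} \oplus \Z^{s},
\end{equation*}
where $s = \rank(\Gamma/\langle\A\rangle)$. Since $\Hom(-, G)$ converts direct sums in the first argument into direct products, and $\Hom(\Z, G) \simeq G$, this gives
\begin{equation*}
\Hom(\Gamma/\langle\A\rangle, G) \simeq \Hom\bigl((\Gamma/\langle\A\rangle)_{\tor}, G\bigr) \times G^{s}.
\end{equation*}

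The remaining point, and the only place where one needs to do anything beyond formalities, is the rank identity $s = r_\Gamma - r_\A$. This I would obtain by tensoring the short exact sequence $0 \to \langle\A\rangle \to \Gamma \to \Gamma/\langle\A\rangle \to 0$ with $\Q$, so that ranks add: $r_\Gamma = \rank\langle\A\rangle + s = r_\A + s$. Substituting $s = r_\Gamma - r_\A$ into the display above completes the proof. I do not anticipate a genuine obstacle here; the only thing to be careful about is that the splitting of $\Gamma/\langle\A\rangle$ as torsion part plus free part is not canonical, but this is harmless since the statement only claims an (abstract) isomorphism.
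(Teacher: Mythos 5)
Your proof is correct and follows essentially the same route as the paper: the first isomorphism via the universal property of the quotient, and the second via the structure theorem applied to $\Gamma/\langle\A\rangle$. You spell out the rank bookkeeping (tensoring with $\Q$) that the paper leaves implicit, but this is only filling in a routine detail, not a different argument.
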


\begin{proof}
Recall that $H_{\alpha, G}\simeq\Hom(\Gamma/\langle\alpha\rangle, G)$. Hence 
$\bigcap_{\alpha\in\A}H_{\alpha, G}\simeq
\Hom(\Gamma/\langle\A\rangle, G)$. From the structure theorem 
for finitely generated abelian groups, we may assume that 
$\Gamma/\langle\A\rangle\simeq(\Gamma/\langle\A\rangle)_{\tor}\oplus
\Z^{r_{\Gamma}-r_{\A}}$. The result follows from this isomorphism. 
\end{proof}

\section{$G$-Tutte polynomials}

\label{sec:GTutte}

In this section, we define the (multivariate) $G$-Tutte polynomial and 
the $G$-characteristic polynomial for a finite list 
$\A\subset\Gamma$ and an 
abelian group $G$. 
We present the deletion-contraction formulas for these polynomials. 
We also give several specializations of $G$-Tutte polynomials. 

\subsection{Torsion-wise finite abelian groups}
\label{subsec:torsionfin}

\begin{definition}
\label{def:torfin}
An abelian group $G$ is called 
\emph{torsion-wise finite} if the subgroup of $d$-torsion points 
$G[d]$ is finite for all $d>0$. 
\end{definition}

\begin{example}
The following are examples of torsion-wise finite abelian groups. 
\begin{itemize}
\item 
Every torsion-free abelian group (e.g., $\{0\}, \Z, \R, \C$) 
is torsion-wise finite. 
\item 
Every finitely generated abelian group is torsion-wise finite. 
\item 
Every subgroup of 
the multiplicative group $K^\times$ for any field $K$ is 
torsion-wise finite 
(e.g., $(S^1, \times)$ and $(\C^\times, \times)$). 
\end{itemize}
\end{example}

\begin{example}
$(\Z/2\Z)^\infty$ is not a torsion-wise finite group. 
\end{example}

The class of torsion-wise finite groups is closed under 
taking subgroups and finite direct products. 
We mainly study torsion-wise finite groups of the form 
\begin{equation*}
%\label{eq:mainex}
G\simeq F\times (S^1)^p\times \R^q, 
\end{equation*}
where $F$ is a finite abelian group and $p, q\geq 0$. 

\begin{proposition}
\label{prop:finitness}
Let $G$ be a torsion-wise finite abelian group. 
Let $F$ be a finite abelian group. Then 
$\Hom(F, G)$ is finite. 
\end{proposition}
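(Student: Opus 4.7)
The plan is to reduce the statement to the definition of torsion-wise finiteness via the structure theorem for finite abelian groups.

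First, I would invoke the structure theorem to write $F$ as a finite direct sum of cyclic groups, say $F \cong \bigoplus_{i=1}^{k} \Z/d_i\Z$ with each $d_i$ a positive integer. Since $\Hom(-,G)$ converts finite direct sums in the first argument into finite direct products, this yields the canonical isomorphism
\begin{equation*}
\Hom(F,G) \cong \prod_{i=1}^{k} \Hom(\Z/d_i\Z, G).
\end{equation*}

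Next, I would identify each factor. A homomorphism $\Z/d_i\Z \to G$ is determined by the image of the generator, which must be a $d_i$-torsion element of $G$; conversely any $d_i$-torsion element defines such a homomorphism. This gives $\Hom(\Z/d_i\Z, G) \cong G[d_i]$, exactly as already noted in Example following the definition. By the assumption that $G$ is torsion-wise finite, each $G[d_i]$ is finite.

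Finally, a finite product of finite sets is finite, so $\Hom(F,G)$ is finite, completing the proof. There is really no obstacle here: the argument is a direct unpacking of the two hypotheses (finiteness of $F$ via the structure theorem, torsion-wise finiteness of $G$ via the definition), and the only thing to be careful about is that one only needs $F$ to be finitely generated and torsion for the reduction to $G[d_i]$ to work, while finiteness of $F$ ensures there are only finitely many factors in the product.
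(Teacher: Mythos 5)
Your proof is correct and matches the paper's argument exactly: both invoke the structure theorem to write $F$ as a finite direct sum of cyclic groups, identify $\Hom(\Z/d_i\Z, G)$ with $G[d_i]$, and conclude by finiteness of each factor. The extra remark at the end about only needing $F$ finitely generated and torsion is accurate but not needed here.
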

\begin{proof}
By the structure theorem, we may assume that $F\simeq \Z/d_1\Z\times\cdots
\times\Z/d_k\Z$. Then 
\begin{equation*}
\begin{split}
\Hom(F, G)&=\Hom(\Z/d_1\Z\times\cdots
\times\Z/d_k\Z, G)\\
&=
G[d_1]\times\cdots\times G[d_k] 
\end{split}
\end{equation*}
is finite by definition. 
\end{proof}

The next proposition is useful later 
(we omit the proofs). 
\begin{proposition}
\label{prop:finitetf}
(1) $\Hom(\Gamma, G_1\times G_2)\simeq \Hom(\Gamma, G_1)\times 
\Hom(\Gamma, G_2)$. In particular, if $\Hom(\Gamma, G_1\times G_2)$ is finite, 
then $\#\Hom(\Gamma, G_1\times G_2)=\# \Hom(\Gamma, G_1)\times 
\#\Hom(\Gamma, G_2)$. 

(2) Let $d_1, d_2$ be positive integers. Then 
\begin{equation*}
\Hom(\Z/d_1\Z, \Z/d_2\Z)\simeq
\Hom(\Z/d_2\Z, \Z/d_1\Z)\simeq
\Z/\gcd(d_1, d_2)\Z. 
\end{equation*}
In particular, 
$\#\Hom(\Z/d_1\Z, \Z/d_2\Z)=\gcd(d_1, d_2)$. 
\end{proposition}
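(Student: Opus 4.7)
The plan is to treat the two parts of Proposition \ref{prop:finitetf} separately, using only the universal property of direct products and the structure of cyclic groups. Both are standard computations, and I expect no real obstacles—the most delicate step is organizing the $\gcd$-calculation in part (2).

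For part (1), my approach would be to invoke the universal property of the product. Given the projections $\pi_i:G_1\times G_2\to G_i$ (for $i=1,2$), I would send a homomorphism $\varphi\in\Hom(\Gamma,G_1\times G_2)$ to the pair $(\pi_1\circ\varphi,\pi_2\circ\varphi)\in\Hom(\Gamma,G_1)\times\Hom(\Gamma,G_2)$. The inverse map sends $(\varphi_1,\varphi_2)$ to the homomorphism $\gamma\mapsto(\varphi_1(\gamma),\varphi_2(\gamma))$. One checks that these two constructions are mutually inverse group homomorphisms (additivity of each is immediate from componentwise operations in $G_1\times G_2$). The cardinality assertion then follows since any bijection between finite sets preserves cardinality, and $\#(X\times Y)=\#X\cdot\#Y$.

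For part (2), I would use the fact that a homomorphism $\varphi:\Z/d_1\Z\to\Z/d_2\Z$ is completely determined by $\varphi(\bar 1)\in\Z/d_2\Z$, subject to the well-definedness condition $d_1\cdot \varphi(\bar 1)=\bar 0$ in $\Z/d_2\Z$, i.e.\ $d_2\mid d_1\cdot k$ where $k$ is a representative of $\varphi(\bar 1)$. Writing $g=\gcd(d_1,d_2)$, this condition is equivalent to $(d_2/g)\mid k$, so the admissible values of $\varphi(\bar 1)$ form the cyclic subgroup of $\Z/d_2\Z$ generated by $\overline{d_2/g}$, which has order $g$. The assignment $\varphi\mapsto \varphi(\bar 1)$ is manifestly a group homomorphism, giving $\Hom(\Z/d_1\Z,\Z/d_2\Z)\simeq \Z/g\Z$. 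By symmetry of $\gcd$ in $d_1$ and $d_2$, the same argument applied with the roles swapped yields $\Hom(\Z/d_2\Z,\Z/d_1\Z)\simeq \Z/g\Z$ as well, and the cardinality statement is immediate.

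The main (minor) obstacle is presenting the $\gcd$-calculation cleanly; writing out $d_2\mid d_1 k\Leftrightarrow (d_2/g)\mid k$ using the coprimality of $d_1/g$ and $d_2/g$ keeps things transparent. Everything else is formal.
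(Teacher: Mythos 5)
Your proposal is correct. The paper itself omits the proof of Proposition~\ref{prop:finitetf} (it explicitly says ``we omit the proofs''), so there is nothing to compare against; your argument via the universal property of the product for part (1) and the standard identification of $\Hom(\Z/d_1\Z,\Z/d_2\Z)$ with the $d_1$-torsion subgroup of $\Z/d_2\Z$ for part (2) is exactly the expected route, and it is carried out correctly.
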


\subsection{$G$-Tutte polynomials for torsion-wise finite abelian groups}

\label{subsec:gtutte}

\begin{definition}
\label{def:multiplicit}
Let $\Gamma$ be a finitely generated abelian group, 
$\A$ a list of elements in $\Gamma$, and $G$ a torsion-wise finite 
abelian group. 
Define the $G$-multiplicity $m(\A; G)\in\Z$ by 
\begin{equation*}
m(\A; G):=
\#\Hom\left((\Gamma/\langle\A\rangle)_{\tor}, G\right). 
\end{equation*}
(Recall that $(\Gamma/\langle\A\rangle)_{\tor}$ is the torsion part of 
the group $\Gamma/\langle\A\rangle$.) 
\end{definition}
Let us describe $m(\A; G)$ more explicitly. 
Because $\Gamma/\langle\A\rangle$ is a finitely generated abelian group, 
it is isomorphic to a group of the form 
$\Z/d_1\Z\oplus\cdots\oplus\Z/d_r\Z\oplus\Z^{r_\Gamma-r_\A}$. Thus 
$(\Gamma/\langle\A\rangle)_{\tor}\simeq\bigoplus_{i=1}^r\Z/d_i\Z$, and 
\begin{equation*}
\Hom\left((\Gamma/\langle\A\rangle)_{\tor}, G\right)\simeq
\bigoplus_{i=1}^r G[d_i]. 
\end{equation*}
Therefore, $m(\A; G)=\prod_{i=1}^r\#G[d_i]$. 

\begin{remark}
It is also easily seen that 
$\Hom\left((\Gamma/\langle\A\rangle)_{\tor}, G\right)$ is (non-canonically) 
isomorphic to $\Tor_1^{\Z}(\Gamma/\langle\A\rangle, G)$. 
Hence we may define 
$m(\A; G):=\#\Tor_1^{\Z}(\Gamma/\langle\A\rangle, G)$. 
\end{remark}

\begin{definition}
\label{def:main}
Let 
$\A=\{\alpha_1, \dots, \alpha_n\}\subset\Gamma$ be a finite list of elements 
in a finitely generated abelian group $\Gamma$, and 
$G$ a torsion-wise finite group. 
Recall that $r_\Gamma$ and $r_{\scS}$ denote the rank of 
$\Gamma$ and $\langle\scS\rangle$, respectively (\S \ref{subsec:G-plexif}). 
\begin{itemize}
\item[(1)] 
Define the \emph{multivariate $G$-Tutte polynomial} of $\A$ and $G$ by 
\begin{equation*}
Z_{\A}^{G}(q, v_1, \dots, v_n):=
\sum_{\scS\subset\A}m(\scS; G)q^{-r_\scS}\prod_{\alpha_i\in\scS}v_i. 
\end{equation*}
\item[(2)] 
Define the \emph{$G$-Tutte polynomial} of $\A$ and $G$ by 
\begin{equation*}
T_{\A}^{G}(x, y):=
\sum_{\scS\subset\A}m(\scS; G)(x-1)^{r_\A-r_\scS}(y-1)^{\#\scS-r_\scS}. 
\end{equation*}
\item[(3)] 
Define the \emph{$G$-characteristic polynomial} of $\A$ and $G$ by 
\begin{equation*}
%\label{eq:Gchar}
\chi_{\A}^G(t):=
\sum_{\scS\subset\A}(-1)^{\#\scS}
m(\scS; G)\cdot t^{r_{\Gamma}-r_\scS}. 
\end{equation*}
\end{itemize}
\end{definition}
These three polynomials are related by the following formulas, as 
in the cases of the Tutte and the arithmetic Tutte polynomials 
(\cite{br-mo, moci-tor, sokal}). 
\begin{equation*}
\begin{split}
T_{\A}^{G}(x, y)
&=
(x-1)^{r_\A}\cdot 
Z_{\A}^{G}((x-1)(y-1), y-1, \dots, y-1), \\
\chi_{\A}^G(t)
&=
(-1)^{r_\A}\cdot t^{r_{\Gamma}-r_\A}\cdot T_{\A}^{G}(1-t, 0). 
\end{split}
\end{equation*}

Recall that $\alpha\in\A$ is called a loop (resp.\ coloop) 
if $\alpha\in\Gamma_{\tor}$ (resp.\ $r_\A=r_{\A\smallsetminus\{\alpha\}}+1$). 
An element $\alpha$ that is neither a loop nor a coloop is called proper 
(\cite[\S 4.4]{dad-mo}). 

\begin{lemma}
\label{lem:del-cont}
Let $(\A, \A', \A'')$ be the triple associated with 
$\alpha_i\in\A$. Then 
\begin{equation*}
Z_{\A}^G(q, \bm{v})=
\left\{
\begin{array}{ll}
Z_{\A'}^G(q, \bm{v})+
v_i\cdot Z_{\A''}^G(q, \bm{v}), & \mbox{ if $\alpha_i$ is a loop,} \\
Z_{\A'}^G(q, \bm{v})+
v_i\cdot q^{-1}\cdot Z_{\A''}^G(q, \bm{v}), & \mbox{ otherwise.} 
\end{array}
\right.
\end{equation*}
\end{lemma}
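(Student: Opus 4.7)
The plan is to split the defining sum for $Z_{\A}^G(q,\bm{v})$ according to whether $\alpha_i\in\scS$ or not, and then match the two resulting pieces with $Z_{\A'}^G$ and $Z_{\A''}^G$ respectively. More precisely, I write
\begin{equation*}
Z_{\A}^G(q,\bm v)=\sum_{\scS\subset\A,\,\alpha_i\notin\scS} m(\scS;G)\,q^{-r_\scS}\prod_{\alpha_j\in\scS}v_j\ +\ \sum_{\scS\subset\A,\,\alpha_i\in\scS} m(\scS;G)\,q^{-r_\scS}\prod_{\alpha_j\in\scS}v_j.
\end{equation*}
The sublists $\scS\subset\A$ not containing $\alpha_i$ are exactly the sublists of $\A'=\A\smallsetminus\{\alpha_i\}$, and since rank, multiplicity and the product of $v_j$'s are computed identically whether we view $\scS$ inside $\A$ or inside $\A'$, the first sum is visibly $Z_{\A'}^G(q,\bm v)$.

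For the second sum I write each $\scS\ni\alpha_i$ as $\scS=\scT\sqcup\{\alpha_i\}$ with $\scT\subset\A'$, and use the bijection $\scT\leftrightarrow\bar\scT\subset\A''$ sending $\scT$ to its image in $\Gamma''=\Gamma/\langle\alpha_i\rangle$. The factor $\prod_{\alpha_j\in\scS}v_j$ becomes $v_i\cdot\prod_{\alpha_j\in\scT}v_j$, matching the variables of $Z_{\A''}^G$. It then remains to compare (a) multiplicities and (b) ranks.

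For (a), the third isomorphism theorem gives
\begin{equation*}
\Gamma/\langle\scT\cup\{\alpha_i\}\rangle\ \simeq\ \bigl(\Gamma/\langle\alpha_i\rangle\bigr)\big/\langle\bar\scT\rangle\ =\ \Gamma''/\langle\bar\scT\rangle,
\end{equation*}
so the torsion parts agree and hence $m(\scT\cup\{\alpha_i\};G)=m(\bar\scT;G)$ by Definition \ref{def:multiplicit}. For (b), I split into the two cases of the statement. If $\alpha_i$ is a loop, i.e.\ $\alpha_i\in\Gamma_{\tor}$, then $\langle\alpha_i\rangle$ is finite, so adjoining $\alpha_i$ to $\scT$ does not change rank, and the quotient map $\langle\scT\cup\{\alpha_i\}\rangle\twoheadrightarrow\langle\bar\scT\rangle$ has torsion kernel, giving $r_{\scT\cup\{\alpha_i\}}=r_\scT=r_{\bar\scT}$. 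If $\alpha_i$ is not a loop, then $\langle\alpha_i\rangle$ has rank $1$, and the same quotient map has kernel $\langle\scT\cup\{\alpha_i\}\rangle\cap\langle\alpha_i\rangle=\langle\alpha_i\rangle$ of rank $1$; by rank additivity in short exact sequences of finitely generated abelian groups, $r_{\scT\cup\{\alpha_i\}}=r_{\bar\scT}+1$.

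Combining these identifications, the second sum equals $v_i\cdot Z_{\A''}^G(q,\bm v)$ in the loop case and $v_i\cdot q^{-1}\cdot Z_{\A''}^G(q,\bm v)$ otherwise, which is exactly the claimed formula. The only subtle point I expect in carrying this out carefully is the rank bookkeeping when $\alpha_i$ interacts with torsion sitting inside $\langle\scT\rangle$; but since rank is additive in short exact sequences of finitely generated abelian groups modulo torsion, this reduces to the clean dichotomy above.
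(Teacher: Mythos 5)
Your proof is correct and follows the same standard decomposition (split the sum over $\scS\subset\A$ according to whether $\alpha_i\in\scS$, then use the third isomorphism theorem and rank additivity to match the $\alpha_i\in\scS$ piece with $Z_{\A''}^G$); the paper simply cites the analogous Lemma 3.2 of Br\"and\'en--Moci for this same argument rather than spelling it out. The rank bookkeeping you flag as subtle is handled correctly: $\langle\alpha_i\rangle\subset\langle\scT\cup\{\alpha_i\}\rangle$, so the kernel of $\langle\scT\cup\{\alpha_i\}\rangle\twoheadrightarrow\langle\bar\scT\rangle$ is exactly $\langle\alpha_i\rangle$, which has rank $0$ or $1$ according as $\alpha_i$ is or is not a loop.
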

\begin{proof}
Similar to \cite[Lemma 3.2]{br-mo}. 
\end{proof}

\begin{corollary}
\label{cor:del-cont1}
The $G$-Tutte polynomials satisfy 
\begin{equation*}
T_{\A}^G(x, y)=
\left\{
\begin{array}{ll}
T_{\A'}^G(x, y)+
(y-1) T_{\A''}^G(x, y), & \mbox{ if $\alpha_i$ is a loop,} \\
(x-1) T_{\A'}^G(x, y)+
T_{\A''}^G(x, y), & \mbox{ if $\alpha_i$ is a coloop,} \\
T_{\A'}^G(x, y)+
T_{\A''}^G(x, y),  & \mbox{ if $\alpha_i$ is proper.} 
\end{array}
\right.
\end{equation*}
\end{corollary}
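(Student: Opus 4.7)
The plan is to deduce the corollary directly from the multivariate deletion-contraction formula (Lemma 4.6) by applying the substitution $q = (x-1)(y-1)$, $v_1 = \cdots = v_n = y-1$, under which the identity $T_\A^G(x,y) = (x-1)^{r_\A} Z_\A^G(q, \bm{v})$ recorded at the end of \S \ref{sec:GTutte} turns each multivariate identity into an identity of $G$-Tutte polynomials. Under this specialization, the ``correction factor'' $v_i q^{-1}$ appearing in the non-loop branch of Lemma 4.6 collapses to $(x-1)^{-1}$, so the whole computation reduces to bookkeeping of the exponent of $(x-1)$.

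Before doing the algebra I will record the rank relations between $\A$, $\A'$, and $\A'' = \A/\alpha_i$ in each of the three cases. If $\alpha_i$ is a loop, i.e., $\alpha_i \in \Gamma_{\tor}$, then $r_\A = r_{\A'}$ trivially, and since $\langle\alpha_i\rangle$ is finite the quotient $\Gamma/\langle\alpha_i\rangle$ still has rank $r_\Gamma$; because $\langle\A''\rangle$ is the image of $\langle\A'\rangle$ under a quotient with finite kernel, we also get $r_{\A''} = r_{\A'} = r_\A$. If $\alpha_i$ is a coloop, it must have infinite order (else $r_{\A'} = r_\A$), so $\langle\alpha_i\rangle \cong \Z$; using $\langle\A''\rangle \cong \langle\A\rangle/\langle\alpha_i\rangle$ and tensoring with $\Q$ gives $r_{\A''} = r_\A - 1 = r_{\A'}$. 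If $\alpha_i$ is proper, it again has infinite order, and the same argument gives $r_{\A''} = r_\A - 1 = r_{\A'} - 1$.

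With these identities in hand, each case is a transparent rewriting of Lemma 4.6. For instance, in the proper case Lemma 4.6 becomes
\begin{equation*}
(x-1)^{r_\A} Z_\A^G = (x-1)^{r_\A} Z_{\A'}^G + (x-1)^{r_\A - 1} Z_{\A''}^G,
\end{equation*}
and the identities $r_\A = r_{\A'}$ and $r_\A - 1 = r_{\A''}$ immediately repackage this as $T_\A^G = T_{\A'}^G + T_{\A''}^G$. The coloop case is identical except that $r_\A = r_{\A'} + 1$ introduces an extra factor $(x-1)$ in front of $T_{\A'}^G$. The loop case uses the loop branch of Lemma 4.6, contributing the factor $v_i = y - 1$ in front of $T_{\A''}^G$ while the exponents match automatically. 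The one point that will need a line of care is the infinite-order rank identity $r_{\A''} = r_\A - 1$; everything else is purely mechanical.
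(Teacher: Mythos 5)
Your proof is correct and follows exactly the route the paper intends (the paper states this as a corollary of Lemma \ref{lem:del-cont} without writing out the substitution): specialize the multivariate deletion-contraction formula at $q=(x-1)(y-1)$, $v_i=y-1$, and match exponents using the rank relations $r_{\A''}=r_{\A'}=r_{\A}$ (loop), $r_{\A''}=r_{\A'}=r_{\A}-1$ (coloop), and $r_{\A''}=r_{\A'}-1=r_{\A}-1$ (proper). Your justification of the rank drop $r_{\A''}=r_{\A}-1$ for infinite-order $\alpha_i$ via $\langle\A''\rangle\simeq\langle\A\rangle/\langle\alpha_i\rangle$ is also exactly the right piece of bookkeeping.
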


\begin{corollary}
\label{cor:del-cont2}
The $G$-characteristic polynomials satisfy 
\begin{equation*}
\chi_{\A}^G(t)=
\chi_{\A'}^G(t)-
\chi_{\A''}^G(t). 
\end{equation*}
\end{corollary}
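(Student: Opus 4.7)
The plan is to prove the identity directly from the definition of $\chi_\A^G(t)$ by splitting the defining sum according to whether $\alpha_i \in \scS$ or not. This is cleaner than substituting $(x,y) = (1-t,0)$ into Corollary \ref{cor:del-cont1} and then separately verifying the three loop/coloop/proper cases (which also works, but requires keeping track of how $r_\A$ and $r_\Gamma$ each change under deletion and contraction).

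First I would write
$$\chi_{\A}^G(t) \;=\; \sum_{\substack{\scS\subset\A\\ \alpha_i\notin\scS}}(-1)^{\#\scS}m(\scS;G)\, t^{r_\Gamma - r_\scS} \;+\; \sum_{\substack{\scS\subset\A\\ \alpha_i\in\scS}}(-1)^{\#\scS}m(\scS;G)\, t^{r_\Gamma - r_\scS}.$$
The first sum ranges over sublists of $\A' = \A \smallsetminus \{\alpha_i\}$ inside the same ambient group $\Gamma' = \Gamma$, with identical summands, so it is exactly $\chi_{\A'}^G(t)$. For the second sum I would write each $\scS = \scS_0 \sqcup \{\alpha_i\}$ with $\scS_0 \subset \A'$, and pair it with the image sublist $\scT = \{\bar\alpha : \alpha \in \scS_0\} \subset \A''$ in $\Gamma'' = \Gamma/\langle\alpha_i\rangle$; this is a bijection with $\#\scT = \#\scS - 1$. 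The third isomorphism theorem gives a canonical identification
$$\Gamma/\langle\scS_0 \cup \{\alpha_i\}\rangle \;\cong\; \Gamma''/\langle\scT\rangle,$$
which simultaneously yields $r_\Gamma - r_{\scS_0 \cup \{\alpha_i\}} = r_{\Gamma''} - r_\scT$ (matching free ranks of the quotient) and $m(\scS_0 \cup \{\alpha_i\}; G) = m(\scT; G)$ (matching torsion parts, by Definition \ref{def:multiplicit}). Combined with the sign flip $(-1)^{\#\scS} = -(-1)^{\#\scT}$, the second sum reassembles into $-\chi_{\A''}^G(t)$, and the identity follows.

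I do not expect any serious obstacle; the only care needed is the bookkeeping that $r_\Gamma$ and $r_\scS$ drop simultaneously (or both remain fixed, when $\alpha_i$ is a torsion element) upon passing to $\Gamma''$, so that the exponent of $t$ matches on the nose. This is subsumed by the quotient isomorphism above and avoids any case split on whether $\alpha_i$ is a loop, coloop, or proper, in contrast with Corollary \ref{cor:del-cont1} where the three cases genuinely behave differently.
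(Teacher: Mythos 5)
Your proof is correct, and it takes a genuinely different route from the one the paper implies. The paper labels this as a Corollary following Corollary \ref{cor:del-cont1}, so the intended derivation is to substitute $(x,y)=(1-t,0)$ into the three-case deletion--contraction formula for $T_{\A}^G$ and use $\chi_{\A}^G(t)=(-1)^{r_\A}t^{r_\Gamma-r_\A}T_{\A}^G(1-t,0)$, which forces you to track separately how $r_\A$, $r_{\A''}$, and $r_{\Gamma''}$ shift in the loop, coloop, and proper cases (and to notice that $\chi_{\A''}^G$ picks up the needed extra sign from $(-1)^{r_{\A''}}=(-1)^{r_\A-1}$ in the proper and coloop cases, versus from the $(y-1)$ factor in the loop case). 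Your approach instead splits the defining sum over $\alpha_i\in\scS$ or not and invokes the third isomorphism theorem $\Gamma/\langle\scS_0\cup\{\alpha_i\}\rangle\cong\Gamma''/\langle\scT\rangle$, which packages both the matching of multiplicities $m(\scS;G)=m(\scT;G)$ and the matching of exponents $r_\Gamma-r_\scS=r_{\Gamma''}-r_\scT$ into a single canonical isomorphism, since $\rank$ of the quotient is intrinsic. This eliminates the case split entirely and is arguably cleaner; the paper's route has the advantage of reusing the already-established Tutte-level deletion--contraction and staying parallel to the standard treatments in the arithmetic Tutte literature. One small point worth making explicit if you write this up: the bijection $\scS_0\mapsto\scT$ between sublists of $\A'$ and sublists of $\A''$ is automatic under the paper's multiset convention (both lists are indexed by the same index set), but it is worth a word since $\A''$ may contain repeated elements or zeros even when $\A'$ does not.
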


\subsection{Specializations}

\label{subsec:specializ}

The $G$-Tutte polynomial has several specializations. 

\begin{proposition}
Let $\A$ be a list in the free abelian group $\Gamma=\Z^\ell$. 
Suppose that $G$ is a torsion-free abelian group. Then 
$T_{\A}^G(x, y)=T_{\A}(x, y)$ and 
$\chi_{\A}^G(t)=\chi_{\A}(t)$. 
\end{proposition}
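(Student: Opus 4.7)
The plan is to show that under the hypotheses, the $G$-multiplicity $m(\scS; G)$ equals $1$ for every sublist $\scS\subset\A$, so that the sums defining $T_{\A}^G$ and $\chi_\A^G$ collapse to the classical Tutte and characteristic polynomials term by term.

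The first step is to observe that any homomorphism from a torsion abelian group $H$ to a torsion-free abelian group $G$ must be trivial: if $h\in H$ has order $d$, then $d\cdot\varphi(h)=0$ in $G$, which forces $\varphi(h)=0$. Hence $\Hom(H,G)=\{0\}$ whenever $G$ is torsion-free and $H$ is a torsion group. The second step is to verify that the relevant group is indeed torsion in a controlled way. Since $\Gamma=\Z^\ell$ is finitely generated, for any sublist $\scS\subset\A$ the quotient $\Gamma/\langle\scS\rangle$ is finitely generated, and its torsion part $(\Gamma/\langle\scS\rangle)_{\tor}$ is therefore a finite abelian group.

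Combining these two observations with Definition \ref{def:multiplicit} gives
\begin{equation*}
m(\scS;G)=\#\Hom\!\left((\Gamma/\langle\scS\rangle)_{\tor},G\right)=1
\end{equation*}
for every $\scS\subset\A$. Substituting this into the defining sum of $T_{\A}^G(x,y)$ in Definition \ref{def:main} yields
\begin{equation*}
T_{\A}^G(x,y)=\sum_{\scS\subset\A}(x-1)^{r_\A-r_\scS}(y-1)^{\#\scS-r_\scS}=T_{\A}(x,y),
\end{equation*}
which matches the definition of the classical Tutte polynomial recalled in \S\ref{subsec:arithmtutte}. The identity $\chi_\A^G(t)=\chi_\A(t)$ then follows either from the same termwise substitution into the definition of $\chi_\A^G(t)$, or directly from the relation $\chi_\A^G(t)=(-1)^{r_\A}t^{r_\Gamma-r_\A}T_\A^G(1-t,0)$ together with the classical formula $\chi_\A(t)=(-1)^{r_\A}t^{\ell-r_\A}T_\A(1-t,0)$ and $r_\Gamma=\ell$.

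There is essentially no obstacle here beyond the elementary lemma that torsion maps trivially into torsion-free groups; the main thing to be careful about is that $(\Gamma/\langle\scS\rangle)_{\tor}$ really is finite, which is why the hypothesis that $\Gamma=\Z^\ell$ (or more generally finitely generated) is used.
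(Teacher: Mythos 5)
Your proof is correct and is essentially the same as the paper's one-line argument: the paper simply asserts that $\#\Hom((\Gamma/\langle\scS\rangle)_{\tor},G)=1$, and you have supplied the elementary justification (torsion groups map trivially into torsion-free groups) plus the routine substitution. No discrepancies.
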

\begin{proof}
This follows from $\#\Hom((\Gamma/\langle\scS\rangle)_{\tor}, G)=1$ for 
$\scS\subset\A$. 
\end{proof}

\begin{proposition}
\label{prop:torarith}
Let $\A$ be a list in the finitely generated abelian 
group $\Gamma$, and let 
$G=S^1$ or $\C^\times$. 
Then $T_{\A}^G(x, y)=T_{\A}^{\arith}(x, y)$. 
\end{proposition}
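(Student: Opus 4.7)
The plan is to reduce the claim to a pointwise identity of multiplicities: since $T_{\A}^G$ and $T_{\A}^{\arith}$ have the same structural sum $\sum_{\scS\subset\A}(-)(x-1)^{r_\A-r_\scS}(y-1)^{\#\scS-r_\scS}$, it suffices to prove
\begin{equation*}
m(\scS;G)=m(\scS)\qquad\text{for every sublist }\scS\subset\A
\end{equation*}
when $G=S^1$ or $G=\C^\times$. Here $m(\scS)$ denotes the D'Adderio--Moci extension of the Moci--Br\"and\'en multiplicity to a list in a finitely generated abelian group, which by the structure theorem equals the order of the torsion subgroup of $\Gamma/\langle\scS\rangle$.

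First I would apply the structure theorem to write
\begin{equation*}
(\Gamma/\langle\scS\rangle)_{\tor}\simeq \Z/d_1\Z\oplus\cdots\oplus\Z/d_r\Z,
\end{equation*}
so that $m(\scS)=\prod_{i=1}^r d_i$ and (by Proposition~\ref{prop:finitetf}(1))
\begin{equation*}
\Hom\bigl((\Gamma/\langle\scS\rangle)_{\tor},G\bigr)\simeq\prod_{i=1}^r \Hom(\Z/d_i\Z,G)\simeq\prod_{i=1}^r G[d_i].
\end{equation*}
Thus the problem is reduced to computing $\#G[d]$ for each positive integer $d$ when $G=S^1$ or $\C^\times$.

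Second, for $G=S^1$ the subgroup $S^1[d]$ consists of the $d$-th roots of unity in the unit circle, hence $S^1[d]\simeq\Z/d\Z$ and $\#S^1[d]=d$. For $G=\C^\times$ the same holds: $\C^\times[d]=\mu_d$ is cyclic of order $d$. In either case $\#G[d_i]=d_i$, giving
\begin{equation*}
m(\scS;G)=\#\Hom\bigl((\Gamma/\langle\scS\rangle)_{\tor},G\bigr)=\prod_{i=1}^r d_i=m(\scS),
\end{equation*}
and summing over $\scS\subset\A$ yields $T_{\A}^G(x,y)=T_{\A}^{\arith}(x,y)$.

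There is essentially no hard step here: the whole argument is a bookkeeping exercise combining the structure theorem with the fact that the divisible groups $S^1$ and $\C^\times$ both contain a unique cyclic subgroup of each finite order. The only mild point to check is that the D'Adderio--Moci multiplicity $m(\scS)$ for a list in a general $\Gamma$ does coincide with $|(\Gamma/\langle\scS\rangle)_{\tor}|$ (this matches the Smith normal form definition recalled in \S\ref{subsec:arithmtutte} when $\Gamma=\Z^\ell$, since the $d_{\scS,i}$ are precisely the invariant factors of the torsion of the cokernel).
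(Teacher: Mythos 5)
Your proposal is correct and takes essentially the same approach as the paper's proof, which is the one-line observation that $\#\Hom\bigl((\Gamma/\langle\scS\rangle)_{\tor}, G\bigr)=\#(\Gamma/\langle\scS\rangle)_{\tor}=m(\scS)$ when $G=S^1$ or $\C^\times$. You simply expand the computation of $\#G[d]$ in more detail, but the underlying idea is identical.
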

\begin{proof}
Note that $\#\Hom((\Gamma/\langle\scS\rangle)_{\tor}, G)=
\#(\Gamma/\langle\scS\rangle)_{\tor}$, 
which is equal to the multiplicity 
$m(\scS)$ in the definition of the arithmetic Tutte polynomial. 
\end{proof}
The arithmetic Tutte polynomial can also be obtained as another 
specialization. 
Suppose that $(\Gamma/\langle\scS\rangle)_{\tor}
\simeq\bigoplus_{i=1}^{k_{\scS}}\Z/d_{\scS, i}\Z$, where 
$k_{\scS}\geq 0$ and $d_{\scS, i}\mid d_{\scS, i+1}$. 
Define $\rho_{\A}$ by 
\begin{equation*}
\rho_\A:=\lcm(d_{\scS, k_{\scS}}\mid\scS\subset\A). 
\end{equation*}

\begin{proposition}
\label{prop:rhoA}
$T_{\A}^{\Z/\rho_{\A}\Z}(x, y)=T_{\A}^{\arith}(x, y)$. 
\end{proposition}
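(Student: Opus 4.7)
The plan is to reduce the equality of polynomials to the pointwise equality of coefficients, that is, to show
\[
m(\scS; \Z/\rho_{\A}\Z) = m(\scS)
\]
for every sublist $\scS \subset \A$. Since $T_{\A}^{\arith}$ and $T_{\A}^G$ have identical structures as sums over $\scS$ with the same exponents $(x-1)^{r_\A - r_\scS}(y-1)^{\#\scS - r_\scS}$, matching the multiplicities term by term will be sufficient.

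To carry this out, I would first apply the structure theorem to write
\[
(\Gamma/\langle\scS\rangle)_{\tor} \simeq \bigoplus_{i=1}^{k_\scS} \Z/d_{\scS, i}\Z,
\]
so that $m(\scS) = \prod_{i=1}^{k_\scS} d_{\scS, i}$ by the definition recalled above. On the other hand, using Proposition \ref{prop:finitetf} (applied iteratively),
\[
m(\scS; \Z/\rho_\A\Z) = \#\Hom\Bigl(\bigoplus_{i=1}^{k_\scS}\Z/d_{\scS,i}\Z,\ \Z/\rho_\A\Z\Bigr) = \prod_{i=1}^{k_\scS} \gcd(d_{\scS,i}, \rho_\A).
\]

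It then remains to verify that $d_{\scS, i} \mid \rho_\A$ for every $i \leq k_\scS$, which will give $\gcd(d_{\scS, i}, \rho_\A) = d_{\scS, i}$ and thus equate the two products. By definition, $\rho_\A = \lcm(d_{\scS, k_\scS} \mid \scS \subset \A)$, so in particular $d_{\scS, k_\scS} \mid \rho_\A$ for the fixed $\scS$. Combined with the divisibility chain $d_{\scS, 1} \mid d_{\scS, 2} \mid \cdots \mid d_{\scS, k_\scS}$ coming from the Smith/invariant factor convention, we get $d_{\scS, i} \mid \rho_\A$ for all $i$, completing the argument.

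There is no real obstacle here; the only point requiring care is recognizing that $\rho_\A$ only involves the \emph{largest} invariant factor $d_{\scS, k_\scS}$ of each sublist, but the divisibility of the smaller $d_{\scS, i}$ by $\rho_\A$ still follows automatically from the invariant factor divisibility chain. Proposition \ref{prop:torarith} then gives the desired identification with $T_{\A}^{\arith}$.
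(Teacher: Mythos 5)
Your proposal is correct and follows essentially the same route as the paper: both reduce to showing $m(\scS; \Z/\rho_\A\Z) = m(\scS)$ via Proposition \ref{prop:finitetf} and the divisibility $d_{\scS,i} \mid \rho_\A$. You merely make explicit the small step (using the invariant factor chain $d_{\scS,1} \mid \cdots \mid d_{\scS,k_\scS}$) that the paper leaves implicit.
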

\begin{proof}
Let $G=\Z/\rho_{\A}\Z$. 
By Proposition \ref{prop:finitetf}, 
because $d_{\scS, i}\mid\rho_{\A}$, 
we have 
$\#\Hom(\Z/d_{\scS, i}\Z, G)=d_{\scS, i}$ for all $\scS\subset\A$ and 
$1\leq i\leq k_{\scS}$. Furthermore, $m(\scS; G)=m(\scS)$ for all $\scS$. 
\end{proof}

\begin{example}
\label{ex:otherex1}
In \cite{car-spo, sokal}, partition functions of abelian group 
valued Potts models were studied, which were generalized to the arithmetic 
matroid setting by Br\"and\'en-Moci \cite[Theorem 7.4]{br-mo}. 
Br\"and\'en-Moci's polynomial $Z_{\mathcal{L}}(\Gamma, H, \bm{v})$ is, 
in our terminology, 
equal to
$(\#H)^{r_\Gamma}\cdot Z_{\mathcal{L}}^H(\#H, v_1, \dots, v_n)$, 
where 
$\mathcal{L}$ is a list of $n$ elements in $\Gamma$, and $H$ is a finite 
abelian group. In the same paper, Br\"and\'en-Moci also defined the Tutte 
quasi-polynomial $Q_{\mathcal{L}}(x, y)$, which is equal to 
$T_{\mathcal{L}}^{\Z/(x-1)(y-1)\Z}(x, y)$ 
for any fixed integers $x$ and $y$. 
\end{example}
\begin{example}
\label{ex:otherex2}
Both 
the modified Tutte-Krushkal-Renhardy polynomial for a finite CW-complex 
(see \cite[\S 3]{bbc}, \cite[\S 4]{de-mo} for details) and Bibby's 
Tutte polynomial for an elliptic arrangement (\cite[Remark 4.4]{bibby}) 
can be expressed using $T_{\A}^{S^1\times S^1}(x, y)$. 
\end{example}
We give a representation of the constituents of characteristic 
quasi-polynomials in terms of $G$-characteristic polynomials 
in \S \ref{subsec:count}.

\subsection{Changing the group $\Gamma$}

\label{subsec:changegamma}

Let $\sigma:\Gamma_1\longrightarrow\Gamma_2$ be a homomorphism between 
finitely generated abelian groups. 
The map $\sigma$ induces a homomorphism 
\begin{equation}
\label{eq:pullback}
\sigma^*:\Hom(\Gamma_2, G)\longrightarrow\Hom(\Gamma_1, G). 
\end{equation}
Let $\alpha\in\Gamma_1$. It is easily seen that 
$(\sigma^*)^{-1}(H_{\alpha, G})=H_{\sigma(\alpha), G}$. Hence 
\eqref{eq:pullback} induces a map between the complements 
\begin{equation*}
%\label{eq:Mpullback}
\sigma^*|_{\M(\sigma(\A); \Gamma_2, G)}:
\M(\sigma(\A); \Gamma_2, G)\longrightarrow\M(\A; \Gamma_1, G). 
\end{equation*}

A natural question is to compare 
$T_{\A}^G(x, y)$ and $T_{\sigma(\A)}^G(x, y)$. 
This comparison is in general difficult. 
However, in the case where 
$\Gamma_1=\Gamma_2=\Z^\ell$ and $G$ is a connected Lie group, 
the constant terms of the $G$-characteristic polynomials can be controlled 
by $\det(\sigma)$. 

\begin{proposition}
\label{prop:latticechange}
Let $\Gamma=\Z^\ell$, 
$\sigma:\Gamma\longrightarrow\Gamma$ be a homomorphism, 
$\A$ be a finite list of elements in $\Gamma$, and 
$G=(S^1)^p\times\R^q$ with $p>0$. 
Then 
\begin{equation*}
\chi_{\sigma(\A)}^{G}(0)=|\det(\sigma)|^p\cdot\chi_{\A}^{G}(0). 
\end{equation*}
\end{proposition}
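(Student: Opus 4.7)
The plan is to exploit the fact that evaluating $\chi_{\A}^G(t)$ at $t=0$ kills every term except those coming from full-rank sublists. Since $\chi_{\A}^G(t) = \sum_{\scS\subset\A}(-1)^{\#\scS}m(\scS;G)\,t^{\ell-r_\scS}$, only $\scS\subset\A$ with $r_\scS = \ell$ survive, i.e., those that generate a full-rank sublattice. For such $\scS$, the quotient $\Gamma/\langle\scS\rangle$ is finite and coincides with its torsion subgroup. Since $\R$ is torsion-free, $\Hom(F,\R^q)=0$ for any finite $F$, and Pontryagin duality gives $\#\Hom(F,(S^1)^p) = (\#F)^p$. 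Combining these via Proposition \ref{prop:finitetf}, I obtain
\begin{equation*}
m(\scS;G) = \bigl([\Gamma:\langle\scS\rangle]\bigr)^p
\quad\text{whenever } r_\scS=\ell.
\end{equation*}

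Next I split on whether $\det(\sigma)=0$. If it is, then $\sigma(\Gamma)$ has rank less than $\ell$, so every sublist $\sigma(\scS)\subset\sigma(\A)$ satisfies $r_{\sigma(\scS)}<\ell$, and $\chi_{\sigma(\A)}^G(0)=0$ matches $|\det\sigma|^p\cdot\chi_\A^G(0)=0$.

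The substantive case is $\det(\sigma)\neq 0$, in which $\sigma$ is injective, hence an isomorphism $\Gamma\stackrel{\sim}{\to}\sigma(\Gamma)$ of index $|\det(\sigma)|$. Because $\sigma$ is injective, $r_{\sigma(\scS)} = r_\scS$ for every sublist, so the sublists contributing to $\chi_{\sigma(\A)}^G(0)$ correspond under $\scS \mapsto \sigma(\scS)$ to those contributing to $\chi_\A^G(0)$, with signs $(-1)^{\#\scS}$ preserved. The key index computation then uses the tower $\langle\sigma(\scS)\rangle = \sigma(\langle\scS\rangle) \subset \sigma(\Gamma) \subset \Gamma$: since $\sigma|_\Gamma$ is a bijection onto $\sigma(\Gamma)$, it carries the inclusion $\langle\scS\rangle\subset\Gamma$ isomorphically to $\sigma(\langle\scS\rangle)\subset\sigma(\Gamma)$, yielding $[\sigma(\Gamma):\sigma(\langle\scS\rangle)] = [\Gamma:\langle\scS\rangle]$. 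Multiplicativity of indices gives
\begin{equation*}
[\Gamma:\langle\sigma(\scS)\rangle] \;=\; [\Gamma:\sigma(\Gamma)]\cdot[\sigma(\Gamma):\sigma(\langle\scS\rangle)] \;=\; |\det(\sigma)|\cdot[\Gamma:\langle\scS\rangle].
\end{equation*}

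Raising to the $p$-th power, summing over full-rank $\scS\subset\A$ with signs $(-1)^{\#\scS}$, and using Step 1 to identify each side as the corresponding value of $\chi^G(0)$ produces the claimed identity. The main potential pitfall is purely bookkeeping: one must remember that sublists are formally indexed subsets of the multiset $\A$ (see the Conventions), so $\scS\mapsto\sigma(\scS)$ really is a sign-preserving bijection between sublists of $\A$ and sublists of $\sigma(\A)$, and no collapse of repeated elements can spoil the count.
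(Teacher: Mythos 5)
Your proof is correct and follows essentially the same route as the paper's: both reduce to the full-rank sublists, use $m(\scS;G)=[\Gamma:\langle\scS\rangle]^p$ when $r_\scS=\ell$, dispose of the degenerate case $\det(\sigma)=0$, and then apply the index identity $[\Gamma:\langle\sigma(\scS)\rangle]=|\det(\sigma)|\cdot[\Gamma:\langle\scS\rangle]$. The only difference is cosmetic: you make the index computation explicit via the tower $\sigma(\langle\scS\rangle)\subset\sigma(\Gamma)\subset\Gamma$ and spell out the $\Hom$-calculation $\#\Hom(F,(S^1)^p\times\R^q)=(\#F)^p$, both of which the paper states without derivation.
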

\begin{proof}
By the definition (Definition \ref{def:main}) 
of the $G$-characteristic polynomial, 
$\chi_{\A}^G(t)$ is divisible by $t^{r_{\Gamma}-r_{\A}}$. 
If $\det(\sigma)=0$, then $r_{\sigma(\A)}<\ell=r_{\Gamma}$, and 
$\chi_{\sigma(\A)}^G(t)$ is divisible by $t$. 
Therefore the left-hand side vanishes, and the assertion holds trivially. 

We assume instead that $\det(\sigma)\neq 0$. 
Note that for a sublattice $L\subset\Gamma$ of rank $\ell$, 
we have $(\Gamma:\sigma(L))=|\det(\sigma)|\cdot (\Gamma:L)$. 
Second, if $r_{\scS}=\ell$, then 
$(\Gamma/\langle\scS\rangle)_{\tor}=\Gamma/\langle\scS\rangle$, and 
we have 
$m(\scS; G)=m(\scS; S^1)^p=\#(\Gamma/\langle\scS\rangle)^p$. 
Third, because $\sigma:\Gamma\longrightarrow\Gamma$ is injective, 
$r_{\sigma(\scS)}=r_{\scS}$ and $\#\sigma(\scS)=\#\scS$ 
for every sublist $\scS\subset\A$. 
Therefore, 
\begin{equation*}
\begin{split}
\chi_{\sigma(\A)}^{G}(0)
&=
\sum_{\substack{\sigma(\scS)\subset\sigma(\A) \\ r_{\sigma(\scS)}=\ell}}(-1)^{\#\sigma(\scS)}m(\sigma(\scS); G)\\
&=
\sum_{\substack{\scS\subset\A \\ r_{\scS}=\ell}}(-1)^{\#\scS}m(\sigma(\scS); G)\\
&=
\sum_{\substack{\scS\subset\A \\ r_{\scS}=\ell}}(-1)^{\#\scS}|\det(\sigma)|^p m(\scS; G)\\
&=
|\det(\sigma)|^p\cdot\chi_{\A}^{G}(0).
\qedhere
\end{split}
\end{equation*}
\end{proof}

\subsection{The case $\Gamma$ is finite}

\label{subsec:finiteGamma}

If the group $\Gamma$ is finite (or equivalently $r_{\Gamma}=0$), then 
$r_{\A}=r_{\scS}=0$. Hence $T_{\A}^G(x, y)$ is a polynomial in $y$ 
by definition. In this case, 
the coefficients of the $G$-Tutte polynomial can be explicitly expressed. 
More generally, we can prove the following. 

\begin{theorem}
\label{thm:finpos}
Let $\A$ be a finite list of elements in a 
finitely generated abelian group $\Gamma$, and let $G$ be a 
torsion-wise finite abelian group. 
Suppose that $\A$ is contained in $\Gamma_{\tor}$. Then 
\begin{equation*}
T_{\A}^{G}(x, y)=
\sum_{\underset{\bullet}{k}=0}^{\#\A}
\left(
\sum_{\substack{\underset{\bullet}{\scS}\subset\A\\ \#\underset{\bullet}{\scS}=k}}
\#\M(\A/\scS; \Gamma_{\tor}/\langle\scS\rangle, G)
\right)
y^k. 
\end{equation*}
In particular, $T_{\A}^G(x, y)$ is a polynomial in $y$ with 
positive coefficients. 
\end{theorem}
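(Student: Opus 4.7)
The plan is to exploit the hypothesis $\A \subset \Gamma_{\tor}$ to collapse the $x$-dependence and then recognize the resulting $y$-polynomial as an inclusion--exclusion expression that can be inverted via Proposition~\ref{prop:disjoint}. First I would note that $\A \subset \Gamma_{\tor}$ forces $r_\scS = 0$ for every sublist $\scS \subset \A$, so
\begin{equation*}
T_{\A}^{G}(x,y) = \sum_{\scS \subset \A} m(\scS; G)\,(y-1)^{\#\scS}.
\end{equation*}
Next, since $\scS \subset \Gamma_{\tor}$, the short exact sequence
$0 \to \Gamma_{\tor}/\langle\scS\rangle \to \Gamma/\langle\scS\rangle \to \Gamma/\Gamma_{\tor} \to 0$
splits (the quotient is free), whence $(\Gamma/\langle\scS\rangle)_{\tor} = \Gamma_{\tor}/\langle\scS\rangle$ and consequently
\begin{equation*}
m(\scS; G) = \#\Hom(\Gamma_{\tor}/\langle\scS\rangle, G) = \#\{\varphi \in \Hom(\Gamma_{\tor}, G) \mid \varphi(\alpha)=0 \text{ for all }\alpha\in\scS\}.
\end{equation*}

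The central step is to re-express $m(\scS; G)$ as a sum of complement-cardinalities. Applying Proposition~\ref{prop:disjoint} to the list $\A$ inside $\Gamma_{\tor}$ (and intersecting with the condition $\varphi|_{\langle\scS\rangle}=0$), the set above partitions according to the unique maximal vanishing sublist $\A_\varphi = \{\alpha \in \A \mid \varphi(\alpha)=0\} \supset \scS$, yielding
\begin{equation*}
m(\scS; G) = \sum_{\scS \subset \underset{\bullet}{\scT} \subset \A} \#\M(\A/\scT;\, \Gamma_{\tor}/\langle\scT\rangle,\, G).
\end{equation*}
Substituting this into the formula for $T_{\A}^G(x,y)$ and swapping the order of summation gives
\begin{equation*}
T_{\A}^G(x,y) = \sum_{\scT \subset \A} \#\M(\A/\scT;\, \Gamma_{\tor}/\langle\scT\rangle,\, G) \sum_{\underset{\bullet}{\scS} \subset \scT} (y-1)^{\#\scS},
\end{equation*}
and the inner sum simplifies by the binomial theorem to $((y-1)+1)^{\#\scT} = y^{\#\scT}$. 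Grouping by $k = \#\scT$ produces the asserted identity, and positivity of the coefficients is automatic since each cardinality $\#\M$ is a nonnegative integer.

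The main delicate point, which I would verify carefully, is the identification $(\Gamma/\langle\scS\rangle)_{\tor} = \Gamma_{\tor}/\langle\scS\rangle$ and the subsequent description of $m(\scS; G)$ as counting $\varphi \in \Hom(\Gamma_{\tor}, G)$ vanishing on $\scS$; once this is secured, the partition argument via Proposition~\ref{prop:disjoint} applies verbatim to the ambient group $\Gamma_{\tor}$ in place of $\Gamma$, and the rest is formal manipulation. A secondary bookkeeping issue is that sublists are multisets: the decomposition $\scS \subset \scT$ with $\scT \setminus \scS \subset \A \setminus \scS$ must respect the multiset conventions of~\cite[\S 2.1]{dad-mo}, but this is exactly what Proposition~\ref{prop:disjoint} handles.
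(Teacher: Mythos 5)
Your proposal is correct and uses the same key ingredients as the paper's own proof: the observation that $\A\subset\Gamma_{\tor}$ forces $r_{\scS}=0$ and $(\Gamma/\langle\scS\rangle)_{\tor}=\Gamma_{\tor}/\langle\scS\rangle$, and Proposition~\ref{prop:disjoint} to rewrite $m(\scS;G)$ as $\sum_{\scS\subset\scT\subset\A}\#\M(\A/\scT;\Gamma_{\tor}/\langle\scT\rangle,G)$. The only difference is in the order of algebraic manipulations: the paper first expands $(y-1)^{\#\scS}$ into powers of $y$, then substitutes the disjoint-union decomposition, and then has to verify a somewhat fiddly chain of binomial-coefficient identities; you substitute the decomposition first and then collapse the remaining inner sum $\sum_{\scS\subset\scT}(y-1)^{\#\scS}$ in one stroke by the binomial theorem to $y^{\#\scT}$. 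This is the same proof, but your rearrangement bypasses the bookkeeping of signed binomial sums and is cleaner; the multiset convention makes the inner sum count sublists by size with $\binom{\#\scT}{j}$ multiplicities, which is exactly what the binomial theorem needs, so that step is sound.
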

\begin{proof}
By assumption, $r_{\A}=r_{\scS}=0$ and 
$(\Gamma/\langle\scS\rangle)_{\tor}=
\Gamma_{\tor}/\langle\scS\rangle$ for every $\scS\subset\A$. 
Using Proposition \ref{prop:disjoint}, we have 
\begin{equation*}
{\small 
\begin{split}
T_{\A}^{G}(x, y)
=&
\sum_{\underset{\bullet}{\scS}\subset\A}
\#\Hom(\Gamma_{\tor}/\langle\scS\rangle, G)\cdot(y-1)^{\#\scS}
\\
=&
\sum_{\underset{\bullet}{\scS}\subset\A}
\#\Hom(\Gamma_{\tor}/\langle\scS\rangle, G)\cdot
\sum_{\underset{\bullet}{k}=0}^{\#\scS}y^k\cdot (-1)^{\#\scS-k}\cdot
\begin{pmatrix}
\#\scS\\ k
\end{pmatrix}
\\
=&
\sum_{\underset{\bullet}{k}=0}^{\#\A}y^k\cdot
\sum_{\substack{\underset{\bullet}{\scS}\subset\A\\ \#\underset{\bullet}{\scS}\geq k}}
(-1)^{\#\scS-k}
\begin{pmatrix}
\#\scS\\ k
\end{pmatrix}
\sum_{\scS\subset\underset{\bullet}{\scT}\subset\A}\#\M(\A/\scT; \Gamma_{\tor}/\langle\scT\rangle, G)
\\
=&
\sum_{\underset{\bullet}{k}=0}^{\#\A}y^k\cdot
\sum_{\substack{\underset{\bullet}{\scT}\subset\A \\ \#\underset{\bullet}{\scT}\geq k}}\#\M(\A/\scT; \Gamma_{\tor}/\langle\scT\rangle, G)\cdot
\sum_{\substack{\underset{\bullet}{\scS}\subset\scT\\ \#\underset{\bullet}{\scS}\geq k}}
(-1)^{\#\scS-k}
\begin{pmatrix}
\#\scS\\ k
\end{pmatrix}
\\
=&
\sum_{\underset{\bullet}{k}=0}^{\#\A}y^k\cdot
\sum_{\substack{\underset{\bullet}{\scT}\subset\A \\ \#\underset{\bullet}{\scT}\geq k}}\#\M(\A/\scT; \Gamma_{\tor}/\langle\scT\rangle, G)\cdot
\sum_{k\leq\underset{\bullet}{m}\leq\#\scT}(-1)^{m-k}
\begin{pmatrix}
m\\ k
\end{pmatrix}
\cdot
\begin{pmatrix}
\#\scT\\ m
\end{pmatrix}
\\
=&
\sum_{\underset{\bullet}{k}=0}^{\#\A}y^k\cdot
\sum_{\substack{\underset{\bullet}{\scT}\subset\A \\ \#\underset{\bullet}{\scT}\geq k}}\#\M(\A/\scT; \Gamma_{\tor}/\langle\scT\rangle, G)\cdot
\begin{pmatrix}
\#\scT\\ k
\end{pmatrix}
\sum_{k\leq\underset{\bullet}{m}\leq\#\scT}(-1)^{m-k}
\begin{pmatrix}
\#\scT-k\\ m-k
\end{pmatrix}
\\
=&
\sum_{\underset{\bullet}{k}=0}^{\#\A}y^k\cdot
\sum_{\substack{\underset{\bullet}{\scT}\subset\A \\ 
\#\underset{\bullet}{\scT}= k}} \#\M(\A/\scT; \Gamma_{\tor}/\langle\scT\rangle, G).
\end{split}
}
\qedhere
\end{equation*}
\end{proof}

\section{Euler characteristic and point counting}

\label{sec:eulercount}

In this section, we prove formulas that express the 
Euler characteristic (in the case that $G$ is an abelian Lie group 
with finitely many components) and the cardinality 
(in the case that $G$ is finite) of the complement as 
a special value of the $G$-characteristic polynomial. 
We then describe the constituents of characteristic quasi-polynomials 
in terms of $G$-characteristic polynomials. 

\subsection{Euler characteristic of the complement}

\label{subsec:eulerchar}

We first recall the notion of Euler characteristic for semialgebraic sets 
(see \cite{cos-ras, bpr} for further details). 
Every semialgebraic set $X$ has a decomposition 
$X=\bigsqcup_{i=1}^N X_i$ such that each $X_i$ is a semialgebraic 
subset that is semialgebraically homeomorphic to 
the open simplex $\sigma_{d_i}=\{(x_1, \dots, x_{d_i})\in\R^{d_i}
\mid x_i>0, \sum x_i<1\}$ for some $d_i=\dim X_i$. 
The semialgebraic 
Euler characteristic of $X$ is defined by 
\begin{equation*}
e_{\semi}(X)=\sum_{i=1}^N(-1)^{d_i}. 
\end{equation*}
The Euler characteristic $e_{\semi}(X)$ is independent of the 
choice of decomposition. Furthermore, it satisfies the following 
additivity and multiplicativity properties: 
\begin{itemize}
\item 
Let $X$ be a semialgebraic set. Let $Y\subset X$ be a semialgebraic 
subset. Then $e_{\semi}(X)=e_{\semi}(X\smallsetminus Y)+e_{\semi}(Y)$. 
\item 
Let $X$ and $Y$ be semialgebraic sets. Then $e_{\semi}(X\times Y)=
e_{\semi}(X)\times e_{\semi}(Y)$. 
\end{itemize}
\begin{remark}
Unlike the topological Euler characteristic $e_{\top}(X)=\sum(-1)^ib_i(X)$, 
the semialgebraic Euler characteristic 
$e_{\semi}(X)$ is not homotopy invariant. (Even contractible 
semialgebraic sets have different values: e.g., 
$e_{\semi}([0, 1])=1$, $e_{\semi}(\R_{\geq 0})=0$, $e_{\semi}(\R)=-1$.) 
However, if $X$ is a manifold (without boundary), then 
$e_{\semi}(X)$ and $e_{\top}(X)$ are related by the following formula: 
\begin{equation*}
e_{\semi}(X)=(-1)^{\dim X}\cdot e_{\top}(X). 
\end{equation*}
\end{remark}

Here we assume that $G$ is an abelian Lie group with 
finitely many connected components. Then $G$ is of the form 
$G=(S^1)^{p}\times \R^{q}\times F$, 
where $F$ is a finite abelian group. 
Such a group $G$ can be realized as a semialgebraic set, with 
the group operations defined by $C^{\infty}$ semialgebraic maps. 
Hence subsets defined by using group operations are always semialgebraic 
sets. 

The Euler characteristics of $G$ are easily computed as 
\begin{equation*}
\begin{split}
&
e_{\semi}(G)=
\left\{
\begin{array}{cc}
0, & \mbox{if $p>0$, }\\
(-1)^{p+q}\cdot\# F,& \mbox{if $p=0$, }
\end{array}
\right.
\\
& 
e_{\top}(G)=
\left\{
\begin{array}{cc}
0, & \mbox{if $p>0$, }\\
\# F,& \mbox{if $p=0$. }
\end{array}
\right.
\end{split}
\end{equation*}
Let $\A$ be a finite list of elements in a finitely generated abelian group 
$\Gamma$. 
The space $\M(\A; \Gamma, G)$ is a semialgebraic set, and, if it is not empty, 
it is also a manifold (without boundary) of 
$\dim \M(\A; \Gamma, G)=r_\Gamma\cdot\dim G$. 

The $G$-Tutte polynomial can be used to compute the Euler characteristic of 
$\M(\A; \Gamma, G)$. 

\begin{theorem}
\label{thm:euler}
Let $G$ be an abelian Lie group with 
finitely many connected components, and let $g=\dim G$. 
Then, 
\begin{equation*}
e_{\semi}(\M(\A; \Gamma, G))
=\chi_{\A}^{G}(e_{\semi}(G)), 
\end{equation*}
or equivalently, 
\begin{equation*}
e_{\top}(\M(\A; \Gamma, G))
=(-1)^{g\cdot r_\Gamma} 
\cdot \chi_{\A}^{G}\left((-1)^g\cdot e_{\top}(G)\right). 
\end{equation*}
\end{theorem}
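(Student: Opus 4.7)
The approach relies on three ingredients from the excerpt: additivity and multiplicativity of the semialgebraic Euler characteristic $e_{\semi}$; Proposition \ref{prop:intersect}, which expresses $\bigcap_{\alpha\in\scS}H_{\alpha,G}$ as a product of a finite group and a power of $G$; and the definition of $m(\scS;G)$. The strategy is to apply inclusion-exclusion to the arrangement complement, compute each intersection by the product formula, and recognize the resulting alternating sum as $\chi_{\A}^{G}$ evaluated at $e_{\semi}(G)$.

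First I would check semialgebraicity. Since $G=(S^1)^p\times\R^q\times F$ is a semialgebraic group and $\Gamma\simeq\Z^{r_\Gamma}\oplus\Gamma_{\tor}$, the space $\Hom(\Gamma,G)\simeq G^{r_\Gamma}\times\Hom(\Gamma_{\tor},G)$ is semialgebraic (the second factor being a finite set), and each $H_{\alpha,G}$, being the kernel of a semialgebraic group homomorphism, is a closed semialgebraic subgroup. Consequently every intersection $\bigcap_{\alpha\in\scS}H_{\alpha,G}$ and the complement $\M(\A;\Gamma,G)$ are semialgebraic. Moreover, the identification in Proposition \ref{prop:intersect} is induced by a splitting of the finitely generated abelian group $\Gamma/\langle\scS\rangle$ and is visibly semialgebraic. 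Combining multiplicativity of $e_{\semi}$ with the fact that a finite set of cardinality $N$ contributes $e_{\semi}=N$ yields
$$e_{\semi}\left(\bigcap_{\alpha\in\scS}H_{\alpha,G}\right)=m(\scS;G)\cdot e_{\semi}(G)^{r_\Gamma-r_\scS}.$$

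Next I would apply inclusion-exclusion to $\M(\A;\Gamma,G)=\Hom(\Gamma,G)\smallsetminus\bigcup_{\alpha\in\A}H_{\alpha,G}$. The formal identity of indicator functions
$$\mathbf{1}_{\M(\A;\Gamma,G)}=\prod_{\alpha\in\A}(1-\mathbf{1}_{H_{\alpha,G}})=\sum_{\scS\subset\A}(-1)^{\#\scS}\mathbf{1}_{\bigcap_{\alpha\in\scS}H_{\alpha,G}}$$
remains valid when $\A$ is a multiset (repeated entries simply repeat the same subgroup), so applying the additive valuation $e_{\semi}$ together with the previous computation gives
$$e_{\semi}(\M(\A;\Gamma,G))=\sum_{\scS\subset\A}(-1)^{\#\scS}m(\scS;G)\cdot e_{\semi}(G)^{r_\Gamma-r_\scS},$$
which matches $\chi_{\A}^{G}(e_{\semi}(G))$ via Definition \ref{def:main}(3). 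This proves the first asserted equality.

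For the equivalent topological form I would invoke the relation $e_{\semi}(X)=(-1)^{\dim X}e_{\top}(X)$ valid for manifolds without boundary, noted in the excerpt. Since $G$ is a Lie group of dimension $g$ and $\M(\A;\Gamma,G)$, when nonempty, is a manifold of dimension $g\cdot r_\Gamma$, substituting $e_{\semi}(\M(\A;\Gamma,G))=(-1)^{g\cdot r_\Gamma}e_{\top}(\M(\A;\Gamma,G))$ and $e_{\semi}(G)=(-1)^g e_{\top}(G)$ converts the first formula into the second (the empty case is trivial, as both sides vanish). The main potential obstacle is a bookkeeping one---confirming that the group-theoretic isomorphism of Proposition \ref{prop:intersect} is semialgebraic and that inclusion-exclusion is correctly indexed by sublists rather than by distinct subgroups---but both points are straightforward given the explicit form of $G$ and the structure theorem for finitely generated abelian groups.
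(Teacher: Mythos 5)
Your proof is correct and follows the same approach as the paper: inclusion-exclusion over sublists combined with Proposition \ref{prop:intersect} and the multiplicativity of $e_{\semi}$, yielding the alternating sum that is recognized as $\chi_{\A}^{G}(e_{\semi}(G))$. The extra checks you add (semialgebraicity of the intersections and the isomorphism, and the passage to the topological form via $e_{\semi}(X)=(-1)^{\dim X}e_{\top}(X)$) are sound and simply make explicit what the paper leaves tacit.
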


\begin{proof}
By the additivity of $e_{\semi}(-)$, we can compute 
$e_{\semi}(\M(\A; \Gamma, G))$ by using the principle of inclusion-exclusion 
together with Proposition \ref{prop:intersect} as follows: 
\begin{equation*}
\begin{split}
e_{\semi}(\M(\A; \Gamma, G))
&=
\sum_{\scS\subset\A}(-1)^{\#\scS}\cdot 
e_{\semi}\left(\bigcap_{\alpha\in\scS}H_{\alpha, G}\right)\\
&=
\sum_{\scS\subset\A}(-1)^{\#\scS}\cdot m(\scS; G)\cdot 
e_{\semi}(G)^{r_{\Gamma}-r_\scS}
\\
&=
\chi_{\A}^{G}(e_{\semi}(G)). 
\end{split}
\end{equation*}
\end{proof}

\begin{remark}
We can also prove Theorem \ref{thm:euler} by using 
deletion-contraction formula. 
Note that if $\A=\emptyset$, then 
$\chi_{\A}^G(t)=
\#\Hom(\Gamma_{\tor}, G)\cdot t^{r_\Gamma}$. Hence 
$\chi_{\A}^G(e_{\semi}(G))=
\#\Hom(\Gamma_{\tor}, G)\cdot e_{\semi}(G)^{r_\Gamma}=
e_{\semi}(\Hom(\Gamma, G))$. 
Theorem \ref{thm:euler} then follows easily from Proposition \ref{prop:set} 
and Corollary \ref{cor:del-cont2} by induction on $\#\A$. 
\end{remark}

\subsection{Point counting in complements}

\label{subsec:count}

In the case that $G$ is finite, the complement $\M(\A; \Gamma, G)$ is also a finite set. 
Every finite set can be considered as a $0$-dimensional 
semialgebraic set whose Euler characteristic is equal to its cardinality. 
The following theorem immediately follows from Theorem \ref{thm:euler}. 

\begin{theorem}
\label{thm:count}
Let $\A$ be a finite list of elements in a finitely generated 
abelian group $\Gamma$, 
and let $G$ be a finite abelian group. Then 
\begin{equation*}
\#\M(\A; \Gamma, G)=
\chi_{\A}^G(\#G). 
\end{equation*}
\end{theorem}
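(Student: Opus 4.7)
The plan is to derive Theorem \ref{thm:count} as a special case of Theorem \ref{thm:euler}. The key observation is that a finite set, viewed as a $0$-dimensional semialgebraic set (each point is a $0$-cell, contributing $(-1)^0=1$ to the Euler characteristic), has semialgebraic Euler characteristic equal to its cardinality. Concretely, I would argue that since $G$ is finite, it is a $0$-dimensional Lie group with finitely many connected components (the points themselves), so $e_{\semi}(G)=\#G$; similarly the complement $\M(\A;\Gamma,G)\subset \Hom(\Gamma,G)$ is a finite set, hence $e_{\semi}(\M(\A;\Gamma,G))=\#\M(\A;\Gamma,G)$. Substituting these two identities into Theorem \ref{thm:euler} yields the claim.

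A self-contained alternative, which I would mention in one line, is a direct inclusion-exclusion that bypasses the semialgebraic machinery: by finiteness of $\Hom(\Gamma,G)$ (Proposition \ref{prop:finitness} applied to the torsion part, together with $\Hom(\Z,G)=G$ finite), one writes
\begin{equation*}
\#\M(\A;\Gamma,G)=\sum_{\scS\subset\A}(-1)^{\#\scS}\,\#\!\!\bigcap_{\alpha\in\scS}H_{\alpha,G}.
\end{equation*}
By Proposition \ref{prop:intersect}, the cardinality of the intersection equals $m(\scS;G)\cdot(\#G)^{r_\Gamma-r_\scS}$, and collecting these terms gives exactly $\chi_{\A}^G(\#G)$ by Definition \ref{def:main}(3).

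There is no substantive obstacle here: the content of the theorem is entirely packaged into Theorem \ref{thm:euler} and Proposition \ref{prop:intersect}. The only subtlety worth flagging is the mild coincidence that, for $0$-dimensional semialgebraic sets, $e_{\semi}$ and cardinality agree (whereas for positive-dimensional $G$ the two interpretations of $e_{\semi}(G)$ differ in sign from the topological Euler characteristic), so no sign correction appears when specializing Theorem \ref{thm:euler} to the finite case.
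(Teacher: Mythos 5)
Your proof is correct and follows the paper's approach exactly: the paper also derives Theorem \ref{thm:count} immediately from Theorem \ref{thm:euler} by observing that a finite set, viewed as a $0$-dimensional semialgebraic set, has semialgebraic Euler characteristic equal to its cardinality. The direct inclusion-exclusion route you sketch as an alternative is just Theorem \ref{thm:euler}'s own proof specialized to the finite case, so there is no real divergence there either.
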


We can now describe the constituents of characteristic quasi-polynomials 
as $G$-characteristic polynomials (see \S \ref{subsec:cqp}). 
\begin{theorem}
\label{thm:cqp}
(See \S\ref{subsec:cqp} for notation.) 
Let $\A$ be a finite list of elements in $\Gamma=\Z^\ell$, 
and let $k$ be a divisor of $\rho_{\A}$. 
The $k$-constituent $f_k(t)$ of the characteristic quasi-polynomial 
$\chi^{\quasi}_{\A}(q)$ is equal to 
\begin{equation*}
f_k(t)=
\chi_{\A}^{\Z/k\Z}(t). 
\end{equation*}
\end{theorem}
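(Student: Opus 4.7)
The plan is to combine the point-counting formula (Theorem \ref{thm:count}) with a term-by-term comparison of $G$-multiplicities for $G=\Z/q\Z$ and $G=\Z/k\Z$, then use the fact that two polynomials agreeing on infinitely many integers must be equal. In a single sentence: the $G$-multiplicities factor through residues modulo $\rho_\A$, so the polynomial $\chi_{\A}^{\Z/k\Z}(t)$ captures the whole residue class.

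First, I would invoke Theorem \ref{thm:count} with $G=\Z/q\Z$ to rewrite
\begin{equation*}
\chi^{\quasi}_{\A}(q)=\#\M(\A;\Z^\ell,\Z/q\Z)=\chi_{\A}^{\Z/q\Z}(q)
\end{equation*}
for every positive integer $q$. The theorem is then reduced to showing that $\chi_{\A}^{\Z/q\Z}(q)=\chi_{\A}^{\Z/k\Z}(q)$ whenever $q\equiv k\pmod{\rho_\A}$, since by the definition of the $k$-constituent recalled in \S\ref{subsec:cqp} this is exactly what is needed to identify $f_k(t)$ with $\chi_{\A}^{\Z/k\Z}(t)$ on an infinite arithmetic progression of integers.

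Next, I would compare coefficients of $t^{r_\Gamma-r_\scS}$ in the defining sum of $\chi_{\A}^{G}(t)$. For each sublist $\scS\subset\A$, writing $(\Gamma/\langle\scS\rangle)_{\tor}\simeq\bigoplus_{i=1}^{k_\scS}\Z/d_{\scS,i}\Z$, Proposition \ref{prop:finitetf}(2) gives
\begin{equation*}
m(\scS;\Z/q\Z)=\prod_{i=1}^{k_\scS}\gcd(d_{\scS,i},q),\qquad m(\scS;\Z/k\Z)=\prod_{i=1}^{k_\scS}\gcd(d_{\scS,i},k).
\end{equation*}
Since $d_{\scS,i}$ divides $d_{\scS,k_\scS}$, which in turn divides $\rho_\A$ by definition, the congruence $q\equiv k\pmod{\rho_\A}$ forces $q\equiv k\pmod{d_{\scS,i}}$, hence $\gcd(d_{\scS,i},q)=\gcd(d_{\scS,i},k)$ for every $i$. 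Consequently $m(\scS;\Z/q\Z)=m(\scS;\Z/k\Z)$ for every $\scS\subset\A$, and summing over $\scS$ yields $\chi_{\A}^{\Z/q\Z}(q)=\chi_{\A}^{\Z/k\Z}(q)$.

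Finally, the set $\{q\in\Z_{>0}\mid q\equiv k\pmod{\rho_\A}\}$ is infinite, so the polynomials $f_k(t)$ and $\chi_{\A}^{\Z/k\Z}(t)$ agree at infinitely many points and must be equal. There is no serious obstacle here; the only point that requires care is the observation that every elementary divisor $d_{\scS,i}$ appearing across all sublists divides $\rho_\A$, which is exactly how $\rho_\A$ is defined. This is what allows the multiplicity $m(\scS;\Z/q\Z)$ to depend on $q$ only through its residue modulo $\rho_\A$, and therefore makes $\chi_{\A}^{\Z/k\Z}(t)$ the natural polynomial representative of the residue class $k$.
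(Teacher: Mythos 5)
Your proof is correct and follows essentially the same route as the paper: apply Theorem \ref{thm:count} to write $\chi^{\quasi}_\A(q)=\chi_\A^{\Z/q\Z}(q)$, use Proposition \ref{prop:finitetf}(2) together with $d_{\scS,i}\mid\rho_\A$ to show $m(\scS;\Z/q\Z)=m(\scS;\Z/k\Z)$ termwise, and finish by equating two polynomials that agree on infinitely many integers. The only cosmetic difference is that you restrict to $q\equiv k\pmod{\rho_\A}$, which appeals directly to the definition of $f_k$, whereas the paper takes $q$ with $\gcd(q,\rho_\A)=k$ and implicitly uses the recalled fact that constituents depend only on $\gcd(k,\rho_\A)$; both sets are infinite and either choice works.
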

\begin{proof}
Let $q\in\Z_{>0}$ be a positive integer,  
and suppose that $\gcd(q, \rho_\A)=k$. 
Because $d_{\scS, i}\mid\rho_{\A}$, we have 
\begin{equation*}
\gcd(q, d_{\scS, i})=
\gcd(k, d_{\scS, i})
\end{equation*}
for any $\scS\subset\A$ and $1\leq i\leq r_\scS$. 
It follows from Proposition \ref{prop:finitetf} that 
$\#(\Z/q\Z)[d_{\scS, i}]=\#(\Z/k\Z)[d_{\scS, i}]$, and hence 
\begin{equation*}
m(\scS; \Z/q\Z)=m(\scS; \Z/k\Z). 
\end{equation*}
Using Theorem \ref{thm:count}, 
\begin{equation*}
\begin{split}
f_k(q)&=\#\M(\A; \Gamma, \Z/q\Z)=\chi_{\A}^{\Z/q\Z}(q)\\
&=
\sum_{\scS\subset\A}(-1)^{\#\scS} m(\scS; \Z/q\Z)\cdot q^{r_{\Gamma}-r_{\scS}}
\\
&=
\sum_{\scS\subset\A}(-1)^{\#\scS} m(\scS; \Z/k\Z)\cdot q^{r_{\Gamma}-r_{\scS}}
\\
&=
\chi_{\A}^{\Z/k\Z}(q). 
\end{split}
\end{equation*}
Because $f_k(t)$ and $\chi_{\A}^{\Z/k\Z}(t)$ are 
polynomials in $t$ that have common values for infinitely 
many $q>0$, $f_k(t)=\chi_{\A}^{\Z/k\Z}(t)$. 
\end{proof}

\begin{corollary}
\label{cor:toricchar}
The most degenerate constituent $f_{\rho_\A}(t)$ of the characteristic 
quasi-polynomial $\chi^{\quasi}_{\A}(q)$ is equal to both 
$\chi_{\A}^{\C^\times}(t)$ and $\chi_{\A}^{\arith}(t)$. 
\end{corollary}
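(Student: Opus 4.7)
The corollary is essentially a direct consequence of three results already established in the paper, and the plan is simply to chain them together.

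First I would apply Theorem \ref{thm:cqp} with the divisor $k = \rho_\A$ of $\rho_\A$, which immediately gives $f_{\rho_\A}(t) = \chi_\A^{\Z/\rho_\A\Z}(t)$. This reduces the problem to identifying $\chi_\A^{\Z/\rho_\A\Z}(t)$ with the two target polynomials.

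Next I would handle the arithmetic side. Proposition \ref{prop:rhoA} gives $T_\A^{\Z/\rho_\A\Z}(x,y) = T_\A^{\arith}(x,y)$, and since both characteristic polynomials are obtained from their respective Tutte polynomials by the same specialization
\begin{equation*}
\chi_\A^G(t) = (-1)^{r_\A}\cdot t^{r_\Gamma - r_\A}\cdot T_\A^G(1-t,0),
\end{equation*}
I conclude $\chi_\A^{\Z/\rho_\A\Z}(t) = \chi_\A^{\arith}(t)$. Similarly, Proposition \ref{prop:torarith} yields $T_\A^{\C^\times}(x,y) = T_\A^{\arith}(x,y)$ and hence $\chi_\A^{\C^\times}(t) = \chi_\A^{\arith}(t)$ by the same specialization.

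Combining the two identifications gives $f_{\rho_\A}(t) = \chi_\A^{\Z/\rho_\A\Z}(t) = \chi_\A^{\arith}(t) = \chi_\A^{\C^\times}(t)$, which is the claim. There is no real obstacle here; all the substantive work has been done in establishing Theorem \ref{thm:cqp} (which relies on the point-counting Theorem \ref{thm:count} and a polynomial-agreement-at-infinitely-many-points argument) and in the two propositions about torsion-wise multiplicities, so the corollary reduces to a one-line concatenation of known identities.
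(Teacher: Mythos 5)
Your proposal is correct and is essentially the same argument as the paper's, which also cites Proposition~\ref{prop:torarith}, Proposition~\ref{prop:rhoA}, and Theorem~\ref{thm:cqp} and chains the resulting equalities $\chi_\A^{\arith}(t)=\chi_\A^{\C^\times}(t)=\chi_\A^{\Z/\rho_\A\Z}(t)=f_{\rho_\A}(t)$. You simply spell out the intermediate specialization step (passing from the Tutte to the characteristic polynomial) that the paper leaves implicit.
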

\begin{proof}
By Proposition \ref{prop:torarith}, 
Proposition \ref{prop:rhoA} (and its specialization to 
the characteristic polynomial) and Theorem \ref{thm:cqp}, we have 
\begin{equation*}
\chi_{\A}^{\arith}(t)=
\chi_{\A}^{\C^\times}(t)=
\chi_{\A}^{\Z/\rho_\A\Z}(t)=
f_{\rho_\A}(t). 
\end{equation*}
\end{proof}

\begin{remark}
\label{rem:ffm}
Corollary \ref{cor:toricchar} enables us to compute 
the Poincar\'e polynomial 
$P_{\M(\A; \Gamma, \C^\times)}(t)$ of the associated toric arrangement 
$\A(\C^\times)$ via modulo $q$ counting, where $\rho_{\A}\mid q$. 

In particular, the Poincar\'e polynomial of a toric arrangement 
can be computed if the characteristic quasi-polynomial 
is known. We will compute the Poincar\'e polynomial of the toric 
arrangement for exceptional root systems in \S \ref{sec:example}. 
\end{remark}

\section{Examples: root systems}

\label{sec:example}

As we saw in \S \ref{subsec:count} (Remark \ref{rem:ffm}), 
the Poincar\'e polynomial of a toric arrangement can be computed 
from the characteristic quasi-polynomial. 
By applying recent results on characteristic quasi-polynomials 
of root systems, we prove that Poincar\'e polynomials 
satisfy a certain functional equation. We also prove a formula 
that expresses the Euler characteristic. 

Let $\Phi$ be an irreducible root system of rank $\ell$, and 
let $\Gamma=\Z\cdot\Phi$ be 
the root lattice of $\Phi$. Consider the list $\A_{\Phi}:=\Phi^+\subset\Gamma$ 
of positive roots. 
The characteristic quasi-polynomial $\chi_{\A_\Phi}^{\quasi}(q)$ 
was computed by Suter \cite{sut} and 
Kamiya-Takemura-Terao \cite{ktt-quasi}. 
The most degenerate constituents $f_{\rho_{\A_\Phi}}(t)$ 
are shown in Table \ref{fig:table}. 

\begin{table}[htbp]
\centering
{\footnotesize 
\begin{tabular}{c|c|c|c|c|l}
$\Phi$&$\#W$&$h$&$f$&$\rho_{\A_{\Phi}}$&$f_{\rho_{\A}}(t)=\chi_{\A}^{\C^\times}(t)$\\
\hline\hline
$A_\ell$&$(\ell+1)!$&$\ell+1$&$\ell+1$&$1$&$\displaystyle\prod_{k=1}^\ell(t-k)$\\
\hline
$B_\ell, C_\ell$&$2^\ell\cdot\ell!$&$2\ell$&$2$&$2$&$(t-\ell)\displaystyle\prod_{k=1}^{\ell-1}(t-2k)$\\
\hline
$D_\ell$&$2^{\ell-1}\cdot\ell!$&$2\ell-2$&$4$&$2$&$(t^2-2(\ell-1)t+\frac{\ell(\ell-1)}{2})\displaystyle\prod_{k=1}^{\ell-2}(t-2k)$\\
\hline
$E_6$&$2^7\cdot 3^4\cdot 5$&$12$&$3$&$6$&$(t - 6)^2 (t^4 - 24 t^3 + 186 t^2 - 504 t + 480)$\\
\hline
$E_7$&$2^{10}\cdot 3^4\cdot 5\cdot 7$&$18$&$2$&$12$&$(t - 12) (t^6 - 51 t^5 + 1005 t^4 - 9675 t^3 $\\
&&&&&$+ 47784 t^2 - 116064 t + 120960)$\\
\hline
$E_8$&$2^{14}\cdot 3^5\cdot 5^2\cdot 7$&$30$&$1$&$60$&$t^8 - 120 t^7 + 6020 t^6 - 163800 t^5 + 2626008 t^4$\\
&&&&&$ - 25260480 t^3 +  142577280 t^2 - 445824000 t $\\
&&&&&$ + 696729600$\\
\hline
$F_4$&$2^7\cdot 3^2$&$12$&$1$&$12$&$t^4 - 24 t^3 + 208 t^2 - 768 t + 1152$\\
\hline
$G_2$&$2^2\cdot 3$&$6$&$1$&$6$&$t^2-6 t+12$
\end{tabular}
}
\caption{Table of root systems. (Notation: $W$ is the Weyl group, $h$ is the Coxeter number, $f$ is the index of connection, and $\rho_{\A_\Phi}$ is the minimal period of the characteristic quasi-polynomial. See \cite{hum} for details.) }
\label{fig:table}
\end{table}

Using formula \eqref{eq:torpoin}, or Theorem \ref{thm:poin}, 
the Poincar\'e polynomial for the corresponding toric arrangement is 
$P_{\M(\A_{\Phi}; \Gamma, \C^\times)}(t)=(-t)^\ell\chi_{\A_\Phi}^{\C^\times}
\left(-\frac{1+t}{t}\right)$. 
We only show exceptional cases. 
(See \cite{bergv} for more detailed information about the cohomology groups 
including the Weyl group action (except for $E_8$).) 
\begin{equation*}
\begin{split}
P_{\M(\A_{E_6}; \Gamma, \C^\times)}(t)=&1 + 42 t + 705 t^2 + 6020 t^3 + 27459 t^4 + 63378 t^5 + 58555 t^6,\\
P_{\M(\A_{E_7}; \Gamma, \C^\times)}(t)=&1 + 70 t + 2016 t^2 + 30800 t^3 + 268289 t^4 + 1328670 t^5 \\
& + 3479734 t^6 + 3842020 t^7,\\
P_{\M(\A_{E_8}; \Gamma, \C^\times)}(t)=&1 + 128 t + 6888 t^2 + 202496 t^3 + 3539578 t^4 + 37527168 t^5 \\
&+  235845616 t^6 + 818120000 t^7 + 1313187309 t^8,\\
P_{\M(\A_{F_4}; \Gamma, \C^\times)}(t)=&1 + 28 t + 286 t^2 + 1260 t^3 + 2153 t^4,\\
P_{\M(\A_{G_2}; \Gamma, \C^\times)}(t)=&1 + 8 t + 19 t^2.
\end{split}
\end{equation*}

It was proved in \cite[Corollary 3.8]{yos-wor} that the 
characteristic quasi-polynomial of a root system $\Phi$ satisfies 
the functional equation 
\begin{equation*}
%\label{eq:fe}
\chi_{\A_{\Phi}}^{\quasi}(h-q)=(-1)^\ell \chi_{\A_{\Phi}}^{\quasi}(q), 
\end{equation*}
where $h$ is the Coxeter number. 
This equation holds even at the level of $k$-constituents. 
\begin{equation}
\label{eq:constfe}
f_k(h-q)=(-1)^\ell f_k(q), 
\end{equation}
for some $k$. 
(More precisely, \eqref{eq:constfe} holds for admissible divisors $k$ 
in the sense of \cite[\S 5.3]{yos-linial}.) 
In particular, equation \eqref{eq:constfe} holds for 
$k=\rho_{\A_\Phi}$ if and only if the period 
$\rho_{\A_\Phi}$ divides the Coxeter number $h$, 
which is equivalent to $\Phi\neq E_7, E_8$ 
(see \cite[\S 5.3]{yos-linial} for details). Thus we have the following. 

\begin{proposition}
\label{prop:functeq}
Let $\Phi$ be an irreducible root system. Assume that $\Phi\neq E_7, E_8$. 
Then 
the characteristic polynomial of the associated toric arrangement 
satisfies 
\begin{equation*}
\chi_{\A_\Phi}^{\C^\times}(h-t)=(-1)^\ell
\chi_{\A_\Phi}^{\C^\times}(t), 
\end{equation*}
or, equivalently, the Poincar\'e polynomial satisfies the following relation: 
\begin{equation*}
P_{\M(\A_\Phi; \Gamma, \C^\times)}(t)=
\left( (h+2)t+1\right)^\ell \cdot 
P_{\M(\A_\Phi; \Gamma, \C^\times)}\left(
\frac{-t}{(h+2)t+1}
\right). 
\end{equation*}
\end{proposition}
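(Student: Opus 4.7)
The plan is to reduce Proposition~\ref{prop:functeq} to the already-established functional equation \eqref{eq:constfe} for constituents of the characteristic quasi-polynomial, using Corollary~\ref{cor:toricchar} as the bridge. By Corollary~\ref{cor:toricchar}, the toric characteristic polynomial coincides with the most degenerate constituent: $\chi_{\A_\Phi}^{\C^\times}(t) = f_{\rho_{\A_\Phi}}(t)$. Equation \eqref{eq:constfe} asserts that $f_k(h-q) = (-1)^\ell f_k(q)$ whenever $k$ is an admissible divisor in the sense of \cite{yos-linial}, and according to the remark following \eqref{eq:constfe} this admissibility holds for $k = \rho_{\A_\Phi}$ exactly when $\rho_{\A_\Phi} \mid h$. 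So the first step is simply to inspect Table~\ref{fig:table} and verify case by case that $\rho_{\A_\Phi}$ divides the Coxeter number $h$ precisely for $\Phi \notin \{E_7, E_8\}$: e.g.\ $1 \mid \ell{+}1$ for $A_\ell$; $2 \mid 2\ell$ for $B_\ell, C_\ell, D_\ell$; $6 \mid 12$ for $E_6$; $12 \mid 12$ for $F_4$; $6 \mid 6$ for $G_2$; while $12 \nmid 18$ for $E_7$ and $60 \nmid 30$ for $E_8$. Combining this divisibility with \eqref{eq:constfe} yields the first assertion
\begin{equation*}
\chi_{\A_\Phi}^{\C^\times}(h-t) = (-1)^\ell \chi_{\A_\Phi}^{\C^\times}(t).
\end{equation*}

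The second step is to translate this into the stated functional equation for the Poincaré polynomial. By \eqref{eq:torpoin} (or Theorem~\ref{thm:poin}, applied to $G = \C^\times$ which is non-compact with one connected component and satisfies $r_{\A_\Phi} = \ell$ since $\A_\Phi$ spans $\Gamma$), the Poincaré polynomial is expressed as $P_{\M(\A_\Phi; \Gamma, \C^\times)}(t) = (-t)^\ell \cdot \chi_{\A_\Phi}^{\C^\times}\bigl(-(1+t)/t\bigr)$. Setting $s = -(1+t)/t$, a direct computation gives
\begin{equation*}
h - s = \frac{(h+1)t + (1+t)}{t} = \frac{(h+2)t + 1}{t},
\end{equation*}
and if we define $t' := -t/((h+2)t+1)$ then one checks that $-(1+t')/t' = h - s$. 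Substituting $t'$ into the Poincaré formula and using the already-proved functional equation for $\chi_{\A_\Phi}^{\C^\times}$ converts $(-t')^\ell \chi_{\A_\Phi}^{\C^\times}(h-s)$ into $(t')^\ell \cdot (-1)^\ell \chi_{\A_\Phi}^{\C^\times}(s)$, which after comparing with the expression for $P_{\M(\A_\Phi; \Gamma, \C^\times)}(t)$ gives the claimed identity.

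The genuinely substantive input, namely the constituent-level functional equation \eqref{eq:constfe} and its admissibility criterion, is imported wholesale from \cite{yos-wor, yos-linial}; the present proof contributes only the connection between the most degenerate constituent and the toric $\C^\times$-characteristic polynomial (via Corollary~\ref{cor:toricchar}) and the change-of-variable manipulation. So the main obstacle is essentially bookkeeping: keeping track of signs in the $(-1)^\ell$ factors, verifying that the Möbius-like substitution $t \mapsto -t/((h+2)t+1)$ is indeed the involution corresponding to $s \mapsto h-s$ under $s = -(1+t)/t$, and confirming the admissibility of $\rho_{\A_\Phi}$ from the table rather than proving any new structural result.
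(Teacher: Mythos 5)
Your approach is exactly the paper's: the paper derives the proposition by combining Corollary~\ref{cor:toricchar} with the functional equation \eqref{eq:constfe} for admissible constituents, noting that $\rho_{\A_\Phi}\mid h$ precisely when $\Phi\neq E_7, E_8$, and then passing to the Poincar\'e polynomial via the M\"obius change of variable — which is what you do.

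One small arithmetic slip to flag: from $s=-(1+t)/t$ you should get
\begin{equation*}
h-s \;=\; h+\frac{1+t}{t}\;=\;\frac{(h+1)t+1}{t},
\end{equation*}
not $\frac{(h+2)t+1}{t}$ as displayed (you appear to have added an extra $t$ when clearing denominators). Fortunately the rest of your argument uses the correct quantity implicitly: with $t'=-t/((h+2)t+1)$ one indeed finds $-(1+t')/t'=\frac{(h+1)t+1}{t}=h-s$, and $(-t')^\ell=(-t)^\ell/((h+2)t+1)^\ell$, so the final identity follows as claimed. So the error is confined to one displayed intermediate formula and does not propagate.
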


We next describe the Euler characteristic of $\M(\A_{\Phi}; \Gamma, \C^\times)$. 

\begin{proposition} 
\label{prop:constantterm}

Let $W$ be the Weyl group of $\Phi$, and let $f$ be the index of connection. The constant term of the characteristic polynomial of the associated toric arrangement can be computed as follows:
\begin{equation*}
\chi_{\A_\Phi}^{\C^\times}(0) =\frac{(-1)^\ell\#W}{f}.
\end{equation*}
\end{proposition}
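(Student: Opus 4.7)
The plan is to reduce $\chi_{\A_\Phi}^{\C^\times}(0)$ to a topological Euler-characteristic computation, lift it from the adjoint torus to the simply-connected torus via a covering of degree $f$, and then exploit the free Weyl-group action there. Since $\C^\times \cong S^1 \times \R$ has $p = 1 > 0$ we have $e_{\semi}(\C^\times) = 0$, so Theorem~\ref{thm:euler} yields
\[
\chi_{\A_\Phi}^{\C^\times}(0) = e_{\semi}(\M(\A_\Phi; \Gamma, \C^\times)) = e_{\top}(\M(\A_\Phi; \Gamma, \C^\times)),
\]
where the second equality uses that $\M(\A_\Phi; \Gamma, \C^\times)$ is a complex manifold of even real dimension $2\ell$.

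Next, I would compare the adjoint torus $T_{\mathrm{ad}} := \Hom(\Gamma, \C^\times)$ with the simply-connected torus $T_{\mathrm{sc}} := \Hom(P, \C^\times)$, where $P$ denotes the weight lattice of $\Phi$. The inclusion $\Gamma \hookrightarrow P$ induces a surjective homomorphism $T_{\mathrm{sc}} \twoheadrightarrow T_{\mathrm{ad}}$ with kernel $\Hom(P/\Gamma, \C^\times)$ of order $f$; since the preimage of each subgroup $H_{\alpha, \C^\times} \subset T_{\mathrm{ad}}$ is the corresponding $H_{\alpha, \C^\times} \subset T_{\mathrm{sc}}$, this covering restricts to a degree-$f$ covering $\M(\A_\Phi; P, \C^\times) \twoheadrightarrow \M(\A_\Phi; \Gamma, \C^\times)$. (The same relation also follows formally from Proposition~\ref{prop:latticechange} after identifying both $\Gamma$ and $P$ with $\Z^\ell$ via simple-root and fundamental-weight bases, since the associated change-of-basis matrix---essentially the Cartan matrix---has absolute determinant $f$.) This gives
\[
e_{\top}(\M(\A_\Phi; \Gamma, \C^\times)) = \tfrac{1}{f}\, e_{\top}(\M(\A_\Phi; P, \C^\times)).
\]

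Finally, the Weyl group $W$ acts freely on $\M(\A_\Phi; P, \C^\times) \subset T_{\mathrm{sc}}$: since centralizers of semisimple elements in a simply-connected complex semisimple group are connected (Steinberg), every regular element has trivial $W$-stabilizer. By the Chevalley--Steinberg theorem, $T_{\mathrm{sc}}/W \cong \C^\ell$ with coordinates the characters of the fundamental representations, and $\M(\A_\Phi; P, \C^\times)/W$ is identified with the complement of the discriminant hypersurface $\Delta \subset \C^\ell$. Consequently
\[
e_{\top}(\M(\A_\Phi; P, \C^\times)) = \#W \cdot e_{\top}(\C^\ell \setminus \Delta).
\]
The hard part will be the classical identity $e_{\top}(\C^\ell \setminus \Delta) = (-1)^\ell$, which I would establish by stratifying $\Delta$ according to $W$-conjugacy classes of parabolic stabilizers and inducting on the rank, using that each proper stratum fibres over the regular locus of a lower-rank root subsystem with a $\C^\times$-factor in the fibre (and so has vanishing Euler characteristic). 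Chaining the three reductions then produces $\chi_{\A_\Phi}^{\C^\times}(0) = (-1)^\ell \#W / f$, as claimed.
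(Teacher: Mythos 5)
Your route is genuinely different from the paper's. The paper deduces the proposition from Corollary~\ref{cor:toricchar} (the constituent $f_{\rho_{\A_\Phi}}$ of the characteristic quasi-polynomial equals $\chi_{\A_\Phi}^{\C^\times}$) together with a cited formula from~\cite{yos-wor} relating $f_{\rho_{\A_\Phi}}(0)$ to the Ehrhart quasi-polynomial of the closed fundamental alcove evaluated at $0$, i.e.\ it rests on the Worpitzky-partition machinery. You instead go through Theorem~\ref{thm:euler} to identify $\chi_{\A_\Phi}^{\C^\times}(0)$ with a topological Euler characteristic, pass to the simply-connected torus via the degree-$f$ isogeny $\Hom(P,\C^\times)\twoheadrightarrow\Hom(\Gamma,\C^\times)$, and then use Steinberg's connectedness theorem (free $W$-action on the regular locus of $T_{\mathrm{sc}}$) and the Chevalley--Steinberg quotient $T_{\mathrm{sc}}/W\cong\C^\ell$. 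All of these reductions are correct as stated; in effect they reproduce Remark~\ref{rem:Cartan} and Corollary~\ref{cor:toriccharrootsystem} in reverse.

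However, there is a gap precisely where you flag the ``hard part.'' The stratification argument you sketch for $e_{\top}(\C^\ell\smallsetminus\Delta)=(-1)^\ell$ does not work, because in the group case the proper strata of $\Delta$ do \emph{not} have vanishing Euler characteristic. Take $\Phi=A_2$, $G_{\mathrm{sc}}=SL_3$. The rank-$1$ layers of the arrangement in $T_{\mathrm{sc}}\cong(\C^\times)^2$ are the three copies of $\C^\times$ cut out by the positive roots, and each has generic part $\C^\times$ minus the three central elements, with Euler characteristic $-3$. Their common image in $\C^2=T_{\mathrm{sc}}/W$ is a single stratum of $\Delta$ of Euler characteristic $-3$ (the reflections fix these layers pointwise, so the quotient map is $3:1$), while the rank-$2$ stratum is the image of the three central points and has Euler characteristic $3$. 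Thus $e_{\top}(\Delta)=-3+3=0$ and $e_{\top}(\C^2\smallsetminus\Delta)=1$ as desired, but not because proper strata have vanishing Euler characteristic. The ``$\C^\times$-factor in the fibre'' mechanism you invoke is what makes the analogous statement true for the Lie-algebra adjoint quotient $\mathfrak{t}/W$ (where there is a contracting $\C^\times$-scaling action preserving every stratum of the discriminant); in the group case $T_{\mathrm{sc}}/W$ there is no such action, and no such fibration. So as written, your proof of the key identity $e_{\top}(\C^\ell\smallsetminus\Delta)=(-1)^\ell$ is circular: it is logically equivalent to the proposition itself after your (correct) covering/free-action reductions, and the stratification you propose does not establish it. You would either need an independent proof of this Euler-characteristic identity or need to fall back on an input like \cite{yos-wor} as the paper does.
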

\begin{proof}
By  Corollary \ref{cor:toricchar},
\begin{equation*}
\chi_{\A_\Phi}^{\C^\times}(0)=
f_{\rho_{\A_\Phi}}(0)=\frac{(-1)^\ell\#W}{f}L_{\overline{A^\circ}}(0)=\frac{(-1)^\ell\#W}{f},
\end{equation*}
 where $L_{\overline{A^\circ}}(q)$ is the Ehrhart quasi-polynomial of the closed fundamental alcove $\overline{A^\circ}$ (see \cite{hum, yos-wor} for the definition of $\overline{A^\circ}$). 
The second equality is obtained from \cite[Proposition 3.7]{yos-wor}, and the last equality follows from $L_{\overline{A^\circ}}(0)=1$. 
\end{proof}

\begin{remark}
\label{rem:Cartan}
The Cartan matrix of $\Phi$ whose determinant is the index of connection $f$ expresses the change of basis between the root lattice and the weight lattice. 
It follows from Propositions \ref{prop:latticechange} and \ref{prop:constantterm} that the constant term of the characteristic polynomial of the toric arrangement with respect to the weight lattice equals $(-1)^\ell\#W$. 
This gives a new proof for \cite[Corollary 7.4]{moci-tor}.
\end{remark}

Using the notation in Section \ref{sec:eulercount}, we can compute the Euler characteristic of 
$\M(\A_\Phi; \Gamma, \C^\times)$ as follows, noting that $e_{\top}(\C^\times)=
e_{\semi}(\C^\times)=0$:
\begin{corollary}
\label{cor:toriccharrootsystem}
(\cite{moci-lincei, moci-tor})
\begin{equation*}
e_{\semi}(\M(\A_\Phi; \Gamma, \C^\times))= e_{\top}(\M(\A_\Phi; \Gamma, \C^\times))
=\frac{(-1)^\ell\#W}{f}.
\end{equation*}
\end{corollary}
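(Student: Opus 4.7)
The plan is to apply Theorem \ref{thm:euler} directly with $G = \C^\times$, specialized at the constant term already computed in Proposition \ref{prop:constantterm}. The key observation is that $\C^\times \simeq S^1 \times \R$ is an abelian Lie group with one connected component of dimension $g = 2$, falling into the case $p = 1 > 0$ of the formula for $e_{\semi}(G)$, which gives $e_{\semi}(\C^\times) = e_{\top}(\C^\times) = 0$.

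First, I would invoke Theorem \ref{thm:euler} to obtain
\begin{equation*}
e_{\semi}(\M(\A_\Phi; \Gamma, \C^\times)) = \chi_{\A_\Phi}^{\C^\times}(e_{\semi}(\C^\times)) = \chi_{\A_\Phi}^{\C^\times}(0),
\end{equation*}
and then substitute the value $\chi_{\A_\Phi}^{\C^\times}(0) = (-1)^\ell \#W / f$ from Proposition \ref{prop:constantterm}. For the topological version, I would use the second (equivalent) formulation in Theorem \ref{thm:euler}, namely
\begin{equation*}
e_{\top}(\M(\A_\Phi; \Gamma, \C^\times)) = (-1)^{2\ell} \cdot \chi_{\A_\Phi}^{\C^\times}\bigl((-1)^2 \cdot e_{\top}(\C^\times)\bigr) = \chi_{\A_\Phi}^{\C^\times}(0),
\end{equation*}
giving the same value; alternatively, one can simply note that $\M(\A_\Phi; \Gamma, \C^\times)$ is a manifold of even dimension $2\ell$, so the sign $(-1)^{\dim}$ relating $e_{\semi}$ to $e_{\top}$ is trivial.

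There is no real obstacle here: the statement is a corollary in the literal sense, since all the ingredients have been assembled in the preceding sections. The only thing to be careful about is confirming that $\C^\times$ fits the hypothesis of Theorem \ref{thm:euler} (it is an abelian Lie group with finitely many, in fact one, connected components) and that $e_{\semi}(\C^\times) = 0$, which follows from the multiplicativity $e_{\semi}(S^1 \times \R) = e_{\semi}(S^1) \cdot e_{\semi}(\R) = 0 \cdot (-1) = 0$.
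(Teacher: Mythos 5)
Your proof is correct and takes exactly the same route as the paper: apply Theorem \ref{thm:euler} with $G=\C^\times$ (noting $e_{\semi}(\C^\times)=0$) and substitute the value $\chi_{\A_\Phi}^{\C^\times}(0)=(-1)^\ell\#W/f$ from Proposition \ref{prop:constantterm}. The paper's proof is a one-liner citing the same two results; you have merely spelled out the verification that $\C^\times$ satisfies the hypotheses.
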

\begin{proof}
This follows directly from Theorem \ref{thm:euler} and Proposition \ref{prop:constantterm}.
\end{proof}

\section{Poincar\'e polynomials for non-compact groups}

\label{sec:poinc}

In this section, we prove a formula that expresses 
the Poincar\'e polynomial in terms of the $G$-characteristic polynomial. 

\subsection{Torus cycles}
\label{subsec:tcycle}

We introduce a special class of homology cycles called torus cycles 
in $H_{*}(\M(\A; \Gamma, G), \Z)$, which are lifts of cycles in a compact torus. 

Let $G=F\times (S^1)^p\times\R^q$, where $F$ a finite abelian group. 
Write $G_{\operatorname{c}}=F\times (S^1)^p$ (compact part) and $V=\R^q$ (non-compact part). 

Let $\Gamma$ be a finitely generated abelian group. Fix a decomposition 
$\Gamma=\Gamma_{\tor}\oplus\Gamma_{\free}$, 
where $\Gamma_{\free}\simeq\Z^{r_\Gamma}$.  
Then 
\begin{equation}
\label{eq:decomp1}
\Hom(\Gamma, G)
\simeq 
\Hom(\Gamma, G_{\operatorname{c}})\times\Hom(\Gamma_{\free}, V). 
\end{equation}
(Note that $\Hom(\Gamma_{\tor}, V)=0$.) 
We can decompose this further as follows: 
\begin{equation}
\label{eq:decomp2}
\Hom(\Gamma, G)
\simeq 
\Hom(\Gamma_{\tor}, G_{\operatorname{c}})\times\Hom(\Gamma_{\free}, G_{\operatorname{c}})\times\Hom(\Gamma_{\free}, V). 
\end{equation}

The first component $\Hom(\Gamma_{\tor}, G_{\operatorname{c}})$ of \eqref{eq:decomp2} 
is a finite abelian group, 
the second component $\Hom(\Gamma_{\free}, G_{\operatorname{c}})$ is a compact abelian Lie group 
(not necessarily connected), and the third component is 
$\Hom(\Gamma_{\free}, V)\simeq V^{r_{\Gamma}}\simeq\R^{q\cdot r_{\Gamma}}$. 

Let 
$\alpha=(\beta, \eta)\in \Gamma_{\tor}\oplus\Gamma_{\free}$. 
According to decomposition \eqref{eq:decomp1}, the subgroup 
$H_{\alpha, G}\subset\Hom(\Gamma, G)$ can be expressed as 
\begin{equation*}
H_{\alpha, G}=
H_{\alpha, G_{\operatorname{c}}}\times
H_{\eta, V}, 
\end{equation*}
where $H_{\alpha, G_{\operatorname{c}}}\subset\Hom(\Gamma, G_{\operatorname{c}})$ and 
$H_{\eta, V}\subset\Hom(\Gamma_{\free}, V)$. 

If $\alpha\in\Gamma_{\tor}$, or equivalently $\alpha=(\beta, 0)$, 
then using \eqref{eq:decomp2} gives 
\begin{equation*}
H_{\alpha, G}=
H_{\beta, G_{\operatorname{c}}}\times
\Hom(\Gamma_{\free}, G_{\operatorname{c}})
\times
\Hom(\Gamma_{\free}, V), 
\end{equation*}
where $H_{\beta, G_{\operatorname{c}}}$ is a subgroup of the finite abelian group 
$\Hom(\Gamma_{\tor}, G_{\operatorname{c}})$. 
In this case, $H_{\alpha, G}$ is a collection of connected components 
of $\Hom(\Gamma, G)$ (note that $H_{\beta, G_{\operatorname{c}}}$ is a finite subset of 
the finite abelian group $\Hom(\Gamma_{\tor}, G_{\operatorname{c}})$). 
Similarly, the complement can be expressed as 
\begin{equation*}
\begin{split}
\M(\{\alpha\}; \Gamma, G)&=\Hom(\Gamma, G)\smallsetminus H_{\alpha, G}\\
&=
\left(\Hom(\Gamma_{\tor}, G_{\operatorname{c}})\smallsetminus H_{\beta, G_{\operatorname{c}}}\right)
\times
\Hom(\Gamma_{\free}, G_{\operatorname{c}})
\times
\Hom(\Gamma_{\free}, V). 
\end{split}
\end{equation*}
More generally, if $\A\subset\Gamma_{\tor}\subset\Gamma$, then 
\begin{equation}
\label{eq:torconn}
\begin{split}
\M(\A; \Gamma, G)
&=
\left(
\Hom(\Gamma_{\tor}, G_{\operatorname{c}})\smallsetminus
\bigcup_{\alpha\in\A}H_{\alpha, G_{\operatorname{c}}}
\right)
\times
\Hom(\Gamma_{\free}, G_{\operatorname{c}})
\times
\Hom(\Gamma_{\free}, V)\\
&=
\M(\A; \Gamma_{\tor}, G_{\operatorname{c}})\times
\Hom(\Gamma_{\free}, G_{\operatorname{c}})
\times
\Hom(\Gamma_{\free}, V). 
\end{split}
\end{equation}
Therefore, $\M(\A; \Gamma, G)$ is a collection of some of connected components 
of $\Hom(\Gamma, G)$. 

Let $\A\subset\Gamma$ be a list of elements. 
Define $\A_{\tor}:=\A\cap\Gamma_{\tor}$. As mentioned above, 
$\M(\A_{\tor}; \Gamma, G)$ is a collection of components of 
$\Hom(\Gamma, G)$. 

Consider the following diagram: 
\begin{equation}
\label{eq:diagramprojection}
{\small 
\begin{CD}
\M(\A; \Gamma, G) @> \subset >> \M(\A_{\tor}; \Gamma, G) @> \subset >> \Hom(\Gamma, G) @. \ni (f, t, v)\\
@. @VVV @VV\pi V\\
@. \M(\A_{\tor}; \Gamma, G_{\operatorname{c}}) @> \subset >> \Hom(\Gamma, G_{\operatorname{c}}) @. \ni (f, t), 
\end{CD}
}
\end{equation}
where $\pi:\Hom(\Gamma, G)\longrightarrow\Hom(\Gamma, G_{\operatorname{c}})$ is the projection 
defined by $\pi(f, t, v)=(f, t)$ for 
$(f, t, v)\in 
\Hom(\Gamma_{\tor}, G_{\operatorname{c}})\times\Hom(\Gamma_{\free}, G_{\operatorname{c}})\times
\Hom(\Gamma_{\free}, V)\simeq\Hom(\Gamma, G)$.

Now assume that $q>0$. The fiber of the projection $\pi$ is 
isomorphic to $\Hom(\Gamma, V)\simeq 
V^{r_\Gamma}\simeq \R^{q\cdot r_\Gamma}$. 
Then 
\begin{equation*}
\M(\A\smallsetminus\A_{\tor}; \Gamma, V)=\Hom(\Gamma, V)\smallsetminus
\bigcup_{\alpha\in\A\smallsetminus\A_{\tor}}H_{\alpha, V}
\end{equation*}
is a complement of proper subspaces. Hence it 
is non-empty. 
Fix an element 
$v_0\in \M(\A\smallsetminus\A_{\tor}; \Gamma, V)$. 
For a given $(f, t)\in \Hom(\Gamma, G_{\operatorname{c}})$, 
define $i_{v_0}(f, t):=(f, t, v_0)$. This induces a map 
\begin{equation*}
i_{v_0}: 
\M(\A_{\tor}; \Gamma, G_{\operatorname{c}})\longrightarrow \M(\A; \Gamma, G), 
\end{equation*}
which is a section of the projection 
$\pi|_{\M(\A; \Gamma, G)}:\M(\A; \Gamma, G)\longrightarrow\M(\A_{\tor}; \Gamma, G_{\operatorname{c}})$ 
in \eqref{eq:diagramprojection}. 

\begin{definition}
Assume that $q>0$. 
A cycle $\gamma\in H_*(\M(\A; \Gamma, G), \Z)$ is said to be a \emph{torus cycle} 
if there exist a connected component 
$T\subset\M(\A_{\tor}; \Gamma, G_{\operatorname{c}})$, a cycle 
$\widetilde{\gamma}\in H_*(T, \Z)\subset H_*(\M(\A_{\tor}; \Gamma, G_{\operatorname{c}}), \Z)$ and 
$v_0\in \M(\A\smallsetminus\A_{\tor}; \Gamma, V)$ such that 
\begin{equation*}
\gamma=(i_{v_0})_*(\widetilde{\gamma}). 
\end{equation*}
The subgroup of $H_*(\M(\A; \Gamma, G), \Z)$ generated by 
torus cycles is denoted by $H_*^{\torus}(\A(G))$. 
\end{definition}

\begin{remark}
\label{rem:dependence}
If $q>1$, then the homology class 
$(i_{v_0})_*(\widetilde{\gamma})$ is independent 
of the choice of ${v_0}\in \M(\A\smallsetminus\A_{\tor}; \Gamma, V)$, 
because 
$\M(\A\smallsetminus\A_{\tor}; \Gamma, V)$ is connected. 
On the other hand, if $q=1$, then the subspace $H_{\alpha, V}$ is 
a real hyperplane in $\Hom(\Gamma, V)\simeq V^{r_\Gamma}$. 
Hence the homology class 
$(i_{v_0})_*(\widetilde{\gamma})$ may depend 
on the chamber containing ${v_0}$. 
\end{remark}

\begin{lemma}
\label{lem:surjtorus}
Assume that $q>0$. 
Let $\alpha\in\A\smallsetminus\A_{\tor}$, and 
$\A'=\A\smallsetminus\{\alpha\}$. Then the map 
$\iota: H_*^{\torus}(\A(G))\longrightarrow H_*^{\torus}(\A'(G))$ 
induced by the inclusion $\M(\A; \Gamma, G)\hookrightarrow\M(\A'; \Gamma, G)$ 
is surjective. 
\end{lemma}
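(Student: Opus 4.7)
The plan is to establish surjectivity generator by generator. A generating torus cycle in $H_*^{\torus}(\A'(G))$ has the form $\eta' = (i_{v_0'})_*(\widetilde{\gamma})$ for some connected component $T\subset \M(\A'_{\tor}; \Gamma, G_c)$, some $\widetilde{\gamma}\in H_*(T,\Z)$, and some $v_0'\in \M(\A'\smallsetminus\A'_{\tor}; \Gamma, V)$. The first key observation is that since $\alpha\in\A\smallsetminus\A_{\tor}$, we have $\A_{\tor}=\A'_{\tor}$, whence $\M(\A_{\tor}; \Gamma, G_c)=\M(\A'_{\tor}; \Gamma, G_c)$; in particular the same component $T$ and the same cycle $\widetilde{\gamma}$ are available on the $\A$-side, and only the non-compact base point needs adjusting. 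Moreover
\begin{equation*}
\M(\A\smallsetminus\A_{\tor}; \Gamma, V)=\M(\A'\smallsetminus\A'_{\tor}; \Gamma, V)\smallsetminus H_{\alpha, V}.
\end{equation*}

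Next I would construct the lift. Let $C'$ be the connected component of $\M(\A'\smallsetminus\A'_{\tor}; \Gamma, V)$ containing $v_0'$ (a chamber when $q=1$, the entire space when $q\geq 2$). Since $\alpha\notin\Gamma_{\tor}$, the free part of $\alpha$ is non-zero, so $H_{\alpha, V}$ is a proper linear subspace of $\Hom(\Gamma_{\free}, V)\simeq V^{r_\Gamma}$ and cannot contain the non-empty open set $C'$. Hence $C'\cap\M(\A\smallsetminus\A_{\tor}; \Gamma, V)$ is non-empty, and I choose any $v_0$ in it. Setting $\eta:=(i_{v_0})_*(\widetilde{\gamma})\in H_*^{\torus}(\A(G))$ gives a candidate preimage.

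The remaining task is to verify $\iota(\eta)=\eta'$ in $H_*(\M(\A'; \Gamma, G),\Z)$. Because $v_0$ and $v_0'$ both lie in the path-connected set $C'\subset\M(\A'\smallsetminus\A'_{\tor}; \Gamma, V)$ (using convexity of chambers for $q=1$, or connectedness for $q\geq 2$), I can pick a path $\{v_s\}_{s\in[0,1]}$ from $v_0$ to $v_0'$ inside $C'$. This yields a continuous family of sections $i_{v_s}:T\longrightarrow \M(\A'; \Gamma, G)$, i.e.\ a homotopy between $i_{v_0}$ and $i_{v_0'}$ as maps into $\M(\A'; \Gamma, G)$. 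Homotopic maps induce equal maps on homology, so $(i_{v_0})_*(\widetilde{\gamma})=(i_{v_0'})_*(\widetilde{\gamma})$ in $H_*(\M(\A'; \Gamma, G),\Z)$, giving $\iota(\eta)=\eta'$, as required.

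The main conceptual obstacle is the $q=1$ case, where Remark \ref{rem:dependence} warns that $(i_{v_0})_*(\widetilde{\gamma})\in H_*(\M(\A; \Gamma, G),\Z)$ genuinely depends on the chamber of $\M(\A\smallsetminus\A_{\tor}; \Gamma, V)$ containing $v_0$. The argument succeeds because the relevant homotopy takes place in the larger space $\M(\A'; \Gamma, G)$, whose non-compact slice has the coarser chamber decomposition of $\M(\A'\smallsetminus\A'_{\tor}; \Gamma, V)$; adjacent chambers separated by $H_{\alpha, V}$ fuse into one chamber of the coarser arrangement, precisely removing the chamber ambiguity once we push forward via $\iota$.
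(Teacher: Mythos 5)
Your proof is correct and follows essentially the same approach as the paper's, which simply says to take a small perturbation of the base point off $H_{\alpha,V}$ and appeals to Remark \ref{rem:dependence}. You spell out the perturbation and the homotopy inside $\M(\A';\Gamma,G)$ more explicitly, including a careful treatment of the $q=1$ chamber structure, but the underlying argument is the same.
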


\begin{proof}
Let 
$(i_{v_0})_*(\widetilde{\gamma})\in H_*(\M(\A'; \Gamma, G), \Z)$ 
be a torus cycle. 
If ${v_0}\notin H_{\alpha, V}$, 
then 
$(i_{{{v_0}}})_*(\widetilde{\gamma})$ is clearly contained in the image of 
the map $\iota$. If ${v_0}\in H_{\alpha, V}$, 
since $\M(\A\smallsetminus\A_{\tor}; \Gamma, V)$ is nonempty, 
there exists a small 
perturbation ${v_0'}$ of ${v_0}$ such that ${v_0'}
\in \M(\A\smallsetminus\A_{\tor}; \Gamma, V)$ 
(see Remark \ref{rem:dependence}). 
Then $H_*^{\torus}(\A(G))\ni 
(i_{{v_0'}})_*(\widetilde{\gamma})\longmapsto
(i_{{v_0}})_*(\widetilde{\gamma})\in
H_*^{\torus}(\A'(G))$. 
\end{proof}

\subsection{Meridian cycles}
\label{subsec:tmeridian}

The torus cycles introduced in the previous section are not enough to 
generate the homology group $H_*(\M(\A; \Gamma, G), \Z)$. We also need to 
consider meridians 
of $H_{\alpha, G}$ to generate $H_*(\M(\A; \Gamma, G), \Z)$. 

Let us first recall the notion of layers. 
A layer of $\A(G)$ is a connected component of a non-empty intersection 
of elements of $\A(G)$. Let $\scS\subset\A$. By 
Proposition \ref{prop:intersect}, every connected component 
of $H_{\scS, G}:=\bigcap_{\alpha\in\scS}H_{\alpha, G}$ is isomorphic to 
\begin{equation*}
\left((S^1)^p\times\R^q\right)^{r_{\Gamma}-r_{\scS}}. 
\end{equation*}
We sometimes call the number $r_{\scS}$ the rank of the layer. 
Since $H_{\emptyset, G}=\Hom(\Gamma, G)$, a connected component of 
$\Hom(\Gamma,  G)$ is a layer of rank $0$. Similarly, a connected component 
of $H_{\alpha, G}$ for $\alpha\in\A\smallsetminus\A_{\tor}$ is a layer 
of rank $1$. 

Let $L$ be a layer. 
Denote the set of $\alpha$ such that $H_{\alpha, G}$ contains $L$ by 
$\A_L:=\{\alpha\in\A\mid L\subset H_{\alpha, G}\}$, and the contraction by 
$\A^L:=\A/\A_L$. Note that $L$ can be considered to be a rank $0$ layer 
of $\A^L(G)$. Define 
\begin{equation*}
\begin{split}
\M^L(\A)
:=& 
L\smallsetminus\bigcup_{H_{\alpha, G}\not\supset L} H_{\alpha, G}\\
=& L\cap\M(\A^L; \Gamma/\langle\A_L\rangle, G). 
\end{split}
\end{equation*}
(We consider $\M(\A^L; \Gamma/\langle\A_L\rangle, G)$ as a subset 
of $\Hom(\Gamma, G)$ as in Proposition \ref{prop:disjoint}.) 

Let $L_1\subset\Hom(\Gamma, G)$ be a rank $1$ layer of $\A(G)$, and 
let $L_0$ be the rank $0$ layer that contains $L_1$. 
We wish to define the meridian homomorphism 
\begin{equation*}
\mu_{L_0/L_1}^{\varepsilon}: 
H_*(\M^{L_1}(\A), \Z)\longrightarrow
H_{*+\varepsilon\cdot(g-1)}(\M^{L_0}(\A), \Z), 
\end{equation*}
where $g=\dim G=p+q>0$ and $\varepsilon\in\{0, 1\}$. 

Since the normal bundle of $L_1$ in $L_0$ is trivial, 
there is a tubular neighborhood 
$U$ of $\M^{L_1}(\A)$ in $L_0$ such that 
$U\simeq \M^{L_1}(\A)\times D^g$ with the identification 
$\M^{L_1}(\A)=\M^{L_1}(\A)\times\{0\}$, 
where $D^g$ is the $g$-dimensional disk. 
Then 
$U\cap \M^{L_0}(\A)\simeq \M^{L_1}(\A)\times D^{g*}$, where 
$D^{g*}=D^g\smallsetminus\{0\}$. We denote the corresponding inclusion 
by $i:\M^{L_1}(\A)\times D^{g*}\hookrightarrow M^{L_0}(\A)$. 
For a given $\gamma\in H_*(\M^{L_1}(\A), \Z)$, define the element 
$\mu_{L_0/L_1}^{\varepsilon}(\gamma)\in H_{*+\varepsilon\cdot(g-1)}
(\M^{L_0}(\A), \Z)$ as follows. 
\begin{itemize}
\item[(0)] 
For $\varepsilon=0$, 
let $p_0\in D^{g*}$. Then 
$\gamma\times [p_0]\in H_*(\M^{L_1}(\A))\otimes H_0(D^{g*})\subset
H_{*}(\M^{L_1}(\A)\times D^{g*})$, and 
$\mu_{L_0/L_1}^{0}(\gamma):=i_*(\gamma\times [p_0])$. 
\item[(1)] 
For $\varepsilon=1$, 
let $S^{g-1}\subset D^{g*}$ be a sphere of small radius. Then 
$\gamma\times [S^{g-1}]\in H_*(\M^{L_1}(\A))\otimes H_{g-1}(D^{g*})\subset
H_{*+g-1}(\M^{L_1}(\A)\times D^{g*})$ (this part is essentially the 
Gysin homomorphism). 
Now define $\mu_{L_0/L_1}^{1}(\gamma):=i_{*}(\gamma\times [S^{g-1}])$. 
\end{itemize}
Similarly, we can define the meridian map 
\begin{equation*}
\mu_{L_j/L_{j+1}}^{\varepsilon}: 
H_*(\M^{L_{j+1}}(\A), \Z)\longrightarrow
H_{*+\varepsilon\cdot(g-1)}(\M^{L_j}(\A), \Z) 
\end{equation*}
between layers $L_j\supset L_{j+1}$ with consecutive ranks. 

\begin{definition}
\label{def:mercyc}
A cycle $\gamma\in H_d(\M(\A; \Gamma, G), \Z)$ is called a \emph{meridian cycle} 
if there exists some $k\geq 0$ and 
\begin{itemize}
\item[(a)] 
a flag $L_0\supset L_1\supset\cdots\supset L_k$ of layers with
$\rank L_j=j$, such that 
$L_0\cap\M(\A; \Gamma, G)\neq\emptyset$ (or equivalently, 
$L_0\subset\M(\A_{\tor}; \Gamma, G)$), 
\item[(b)] 
a sequence $\varepsilon_1, \dots, \varepsilon_k\in\{0, 1\}$, and 
\item[(c)] 
a torus cycle $\tau\in H_{d-(g-1)\sum_{i=1}^k\varepsilon_i}
(\M^{L_k}(\A), \Z)$,  
\end{itemize}
such that 
\begin{equation*}
\gamma=
\mu_{L_0/L_1}^{\varepsilon_1}\circ
\mu_{L_1/L_2}^{\varepsilon_2}\circ\cdots\circ
\mu_{L_{k-1}/L_k}^{\varepsilon_k}(\tau). 
\end{equation*}
We call the minimum such $k$ the depth of $\gamma$. 
\end{definition}

By definition, a meridian cycle of depth $0$ is a torus cycle 
of a connected component $L_0$. 
Furthermore, a cycle $\gamma\in H_*(\M(\A; \Gamma, G), \Z)$ is a 
meridian cycle of depth $k>0$ if and only if there exist layers 
$L_0\supset L_1$ of rank $0$ and $1$ respectively, with 
$L_0\cap\M(\A; \Gamma, G)\neq\emptyset$, 
$\varepsilon\in\{0, 1\}$ and a meridian cycle 
$\gamma'\in H_{*-\varepsilon\cdot(g-1)}(\M^{L_1}(\A), \Z)$ 
of depth $(k-1)$ such that 
$\gamma=\mu_{L_0/L_1}^{\varepsilon}(\gamma')$. 

Note that 
in Definition \ref{def:mercyc}, $\M^{L_0}(\A)$ is a non-empty open subset 
of $\M(\A; \Gamma, G)$. Hence we have the induced injection 
$H_*(\M^{L_0}(\A), \Z)\hookrightarrow H_*(\M(\A; \Gamma, G), \Z)$. 
We denote by $H_*^{\mer}(\A(G))$ the submodule of 
$H_*(\M(\A; \Gamma, G), \Z)$ generated by the images of meridian cycles. 
It is clear that 
\begin{equation*}
H_*^{\torus}(\A(G))\subset
H_*^{\mer}(\A(G))\subset
H_*(\M(\A; \Gamma, G), \Z). 
\end{equation*}

\begin{lemma}
\label{lem:surjmerid}
Assume that $q>0$. 
Let $\alpha\in\A\smallsetminus\A_{\tor}$, and let 
$\A':=\A\smallsetminus\{\alpha\}$. Then 
\begin{equation}
\label{eq:surjmerid}
H_*^{\mer}(\A(G))\longrightarrow H_*^{\mer}(\A'(G))
\end{equation}
is surjective. 
\end{lemma}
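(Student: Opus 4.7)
The plan is to argue by induction on the depth $k$ of a meridian cycle $\gamma\in H^{\mer}_*(\A'(G))$ that we wish to lift through the inclusion $\iota$. In the base case $k=0$, $\gamma$ is by definition a torus cycle, so the claim follows directly from Lemma \ref{lem:surjtorus}.

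For the inductive step, write $\gamma = \mu^{\varepsilon}_{L_0/L_1}(\gamma')$ with $\gamma'\in H_*(\M^{L_1}(\A'))$ a meridian cycle of $\A'$ of depth $k-1$, and $L_0\supset L_1$ layers of $\A'(G)$ of ranks $0$ and $1$. Since $\alpha\notin\A_{\tor}$, the subgroup $H_{\alpha,G}$ does not contain any connected component of $\Hom(\Gamma,G)$, so $L_0$ is automatically a rank-$0$ layer of $\A(G)$ with $L_0\cap\M(\A;\Gamma,G)\neq\emptyset$. The heart of the proof is then to lift $\gamma'$ to a meridian cycle in $H_*(\M^{L_1}(\A))$ of depth $k-1$, after which one applies the outer meridian step $\mu^{\varepsilon}_{L_0/L_1}$ to complete the construction.

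I would split the inductive step according to how $L_1$ sits with respect to $H_{\alpha,G}$. If $L_1\not\subset H_{\alpha,G}$, then $L_1$ remains a rank-$1$ layer of $\A(G)$, and the contracted list $\A^{L_1}$ in $\Gamma/\langle\A'_{L_1}\rangle$ differs from $\A'^{L_1}$ exactly by the adjoined element $\overline\alpha$. A useful observation is that if $\overline\alpha$ happens to be torsion in $\Gamma/\langle\A'_{L_1}\rangle$, then $\varphi\mapsto\varphi(\overline\alpha)$ takes values in the finite group $G[d]$ and is therefore locally constant; its restriction to the connected component $L_1$ is thus a constant that is non-zero (by $L_1\not\subset H_{\alpha,G}$), so $L_1\cap H_{\alpha,G}=\emptyset$ and $\gamma'$ lifts trivially to $\M^{L_1}(\A)=\M^{L_1}(\A')$. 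If instead $\overline\alpha$ is non-torsion, the inductive hypothesis applies to the pair $(\A^{L_1},\A'^{L_1})$ in $\Gamma/\langle\A'_{L_1}\rangle$ with the distinguished element $\overline\alpha$, yielding the desired lift. On the other hand, if $L_1\subset H_{\alpha,G}$, then $\M^{L_1}(\A)=\M^{L_1}(\A')$ and $\gamma'$ again lifts trivially; the only issue is to reinterpret the outer step as a meridian construction of $\A(G)$, which may require replacing $\mu^{\varepsilon}_{L_0/L_1}$ by the composition $\mu^{\varepsilon}_{L_0/\widetilde L_1}\circ\mu^{0}_{\widetilde L_1/L_1}$, where $\widetilde L_1$ is a rank-$1$ layer of $\A(G)$ that contains $L_1$ (needed precisely when the rank of $L_1$ in $\A$ jumps from $1$ to $2$).

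The main obstacle is establishing, in each case, the compatibility of the outer meridian maps with the inclusions $\M^{L_j}(\A)\hookrightarrow\M^{L_j}(\A')$ for $j=0,1$: the tubular neighborhood of $\M^{L_1}(\A)$ inside $\M^{L_0}(\A)$ must be chosen inside the one used for $\A'$ yet still avoid $H_{\alpha,G}$. The technically most delicate situation is the sub-case $L_1\subset H_{\alpha,G}$ where the rank jumps, since there one must verify that the inserted meridian $\mu^{0}_{\widetilde L_1/L_1}$ recovers the original homology class on the nose; this reduces to moving the basepoint of the inserted meridian in the non-compact factor $V$, which is possible exactly because of the hypothesis $q>0$, precisely as in the perturbation argument used in the proof of Lemma \ref{lem:surjtorus}.
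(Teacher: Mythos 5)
Your outline follows the paper's strategy closely: induct, use the torus-cycle lemma (Lemma \ref{lem:surjtorus}) in the base case, peel off the outer meridian $\mu^{\varepsilon}_{L_0/L_1}$, and argue according to the position of $L_1$ relative to $H_{\alpha,G}$. Your two sub-cases of $L_1\not\subset H_{\alpha,G}$ are exactly the paper's ``$\overline\alpha$ is a loop in $\A^{L_1}$ with $L_1\cap H_{\alpha,G}=\emptyset$'' and ``$\overline\alpha$ is not a loop'' cases, and your local-constancy argument for the former is a nice reformulation.

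The gap is in the case $L_1\subset H_{\alpha,G}$. You worry that the rank of $L_1$ with respect to $\A$ might jump from $1$ to $2$ and propose inserting an extra meridian $\mu^0_{\widetilde L_1/L_1}$, but this scenario cannot occur. Indeed, $L_1\subset H_{\alpha,G}$ already forces $\overline\alpha$ to be torsion in $\Gamma/\langle\A'_{L_1}\rangle$: if $\overline\alpha$ had infinite order, the evaluation $\Hom(\Gamma/\langle\A'_{L_1}\rangle, G)\ni\varphi\mapsto\varphi(\overline\alpha)\in G$ would be non-constant on the connected component $L_1$ (the identity component of $G$ is non-trivial since $q>0$), so $L_1\not\subset H_{\alpha,G}$. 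Equivalently, since $\alpha\notin\Gamma_{\tor}$, the subgroup $H_{\alpha,G}\cap L_0$ has the same dimension $g(r_\Gamma-1)$ as $L_1$, and a higher-dimensional set cannot sit inside a lower-dimensional one. Hence in this case $\overline\alpha$ is a loop (this is precisely the paper's case distinction), $\langle\A'_{L_1}\cup\{\alpha\}\rangle$ and $\langle\A'_{L_1}\rangle$ have the same rank $1$, and $L_1$ remains a rank-$1$ layer of $\A(G)$. Your proposed ``fix'' with $\widetilde L_1$ and $\mu^0$ addresses a phantom situation and, even if invoked, would not obviously recover the original homology class (inserting an extra meridian step changes the flag).

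Relatedly, the ``main obstacle'' you flag in the final paragraph --- choosing the tubular neighborhood to avoid $H_{\alpha,G}$ --- is not actually delicate and is resolved by the same dimension count. Since $L_1\subset H_{\alpha,G}$ forces $L_1$ to be a whole connected component of $H_{\alpha,G}\cap L_0$, a sufficiently small tubular neighborhood $U$ of $\M^{L_1}(\A')$ satisfies $U\cap H_{\alpha,G}=U\cap L_1$, whence $U\smallsetminus L_1\subset\M^{L_0}(\A)$ and the outer meridian map can be built directly inside $\M^{L_0}(\A)\subset\M^{L_0}(\A')$. This is the content of the paper's loop case and needs to be stated, not deferred as an obstacle. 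With that observation made explicit and the ``rank-jump'' fix deleted, your argument would match the paper's.
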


\begin{proof}
We prove this by induction on $\#(\A\smallsetminus\A_{\tor})$ 
and the depth $k$ of the meridian cycle $\gamma$. 
If $\#(\A\smallsetminus\A_{\tor})=1$, then $\A'=\A_{\tor}$. 
In this case, the meridian cycles of $\M(\A'; \Gamma, G)$ are torus cycles, and 
the result follows from Lemma \ref{lem:surjtorus}. 
Now assume that $\#(\A\smallsetminus\A_{\tor})>1$. 
Let $\gamma\in H_*(\M(\A'; \Gamma, G), \Z)$ be a 
meridian cycle of $\A'$. Suppose that $\gamma$ can be expressed as $\gamma=
\mu_{L_0/L_1}^{\varepsilon_1}\circ\cdots\circ
\mu_{L_{k-1}/L_k}^{\varepsilon_k}(\tau)$, 
as in Definition \ref{def:mercyc}. 
If $k=0$, then $\gamma=\tau$ is a torus cycle. Hence, again by 
Lemma \ref{lem:surjtorus}, $\gamma$ is contained in the image 
of the map \eqref{eq:surjmerid}. We may therefore assume that $k>0$. 
Let $\gamma'=\mu_{L_1/L_2}^{\varepsilon_2}\circ\cdots\circ
\mu_{L_{k-1}/L_k}^{\varepsilon_k}(\tau)$. 
Then $\gamma'\in H_{*-\varepsilon_1\cdot(g-1)}^{\mer}((\A')^{L_1}(G))$ 
is a meridian cycle of depth $(k-1)$. 

We separate the proof into two cases depending on whether 
$\overline{\alpha}$ is a loop in $\A^{L_1}$. 

Suppose that $\overline{\alpha}$ is a loop in $\A^{L_1}$. 
Then $H_{\alpha, G}$ 
either contains $L_1$ or does not intersect $L_1$. 
In either case, $\M^{L_1}(\A)=\M^{L_1}(\A')$. 
Hence the 
tubular neighborhood $U$ of $\M^{L_1}(\A')$ satisfies 
$U\cap \M^{L_0}(\A')=U\cap \M^{L_0}(\A)=U\smallsetminus L_1$, 
and the meridian cycle 
$\gamma=\mu_{L_0/L_1}^{\varepsilon}(\gamma')$ can be constructed in 
$\M^{L_0}(\A)\subset\M^{L_0}(\A')$. Consequently, $\gamma$ is contained in 
the image $H_*^{\mer}(\A(G))\longrightarrow H_*^{\mer}(\A'(G))$. 

Suppose that $\overline{\alpha}$ is not a loop in $\A^{L_1}$. 
Then, by the induction hypothesis, 
there exists a meridian cycle $\widetilde{\gamma}'\in H_{*-\varepsilon_1\cdot(g-1)}^{\mer}(\A^{L_1}(G))$ 
that is sent to $\gamma'$ by the induced map 
\begin{equation*}
H_{*-\varepsilon_1\cdot(g-1)}^{\mer}(\A^{L_1}(G)) \longrightarrow H_{*-\varepsilon_1\cdot(g-1)}^{\mer}((\A')^{L_1}(G)), 
\widetilde{\gamma}'\longmapsto\gamma'. 
\end{equation*}
Using the following commutative diagram, we can conclude that 
$\gamma$ is also contained in the image: 
\begin{equation*}
\begin{CD}
\widetilde{\gamma}'\in H_{*-\varepsilon_1\cdot(g-1)}^{\mer}(\A^{L_1}(G)) @>>> H_{*-\varepsilon_1\cdot(g-1)}^{\mer}((\A')^{L_1}(G))\ni\gamma'\\
@V\mu_{L_0/L_1}^{\varepsilon_1}VV @VV\mu_{L_0/L_1}^{\varepsilon_1}V\\
H_*^{\mer}(\A(G)) @>>> H_*^{\mer}(\A'(G))\ni\gamma. 
\end{CD}
\end{equation*}
\end{proof}

\subsection{Mayer-Vietoris sequences and Poincar\'e polynomials}
\label{subsec:mv}

For simplicity, in this section, we will set 
$\M(\A):=\M(\A; \Gamma, G)$, 
$\M(\A'):=\M(\A'; \Gamma, G)$, and 
$\M(\A''):=\M(\A''; \Gamma'', G)$. 

\begin{theorem}
\label{thm:inductive}
Let $\A$ be a finite list of elements in a 
finitely generated abelian group $\Gamma$, and let 
$G=(S^1)^p\times \R^q\times F$, where $F$ is a finite abelian group. 
Assume that $q>0$, and set $g=\dim G=p+q$. Then the following hold. 
\begin{itemize}
\item[(i)] 
$H_*(\M(\A), \Z)$ is generated by meridian cycles. That is 
$H_*(\M(\A), \Z)=H_*^{\mer}(\A(G))$, 
and furthermore it is torsion free. 
\item[(ii)] 
If $\alpha$ is not a loop, then 
$H_*(\M(\A), \Z)\longrightarrow H_*(\M(\A'), \Z)$ is surjective. 
\item[(iii)] 
Let $\alpha\in\A$. Then 
\begin{equation*}
P_{\M(\A)}(t)=
\left\{
\begin{array}{ll}
P_{\M(\A')}(t)-P_{\M(\A'')}(t), &\ \mbox{ if $\alpha$ is a loop, }\\
P_{\M(\A')}(t)+t^{g-1}\cdot P_{\M(\A'')}(t), &\ \mbox{ if $\alpha$ is not a loop.}\\
\end{array}
\right. 
\end{equation*}
\end{itemize}
\end{theorem}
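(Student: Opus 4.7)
I plan to prove (i), (ii), (iii) simultaneously by induction on $\#\A$, using Proposition \ref{prop:set} to write $\M(\A')=\M(\A)\sqcup\M(\A'')$. For the base case $\A=\emptyset$, $\M(\A)=\Hom(\Gamma,G)$ is a finite disjoint union of products of tori with Euclidean spaces, whose homology is torsion-free and tautologically generated by (depth-zero) torus cycles; (ii) and (iii) are vacuous.

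For the inductive step, first suppose $\alpha$ is a loop, so $\alpha\in\A_{\tor}$. By the analysis in \S\ref{subsec:tcycle}, $H_{\alpha,G}$ is then a union of connected components of $\Hom(\Gamma,G)$, so $\M(\A')=\M(\A)\sqcup\M(\A'')$ is a decomposition into open-and-closed subsets; consequently $H_*(\M(\A'))\cong H_*(\M(\A))\oplus H_*(\M(\A''))$, and (i), (ii), and the identity $P_{\M(\A)}=P_{\M(\A')}-P_{\M(\A'')}$ follow immediately from the inductive hypothesis applied to $\A'$ and $\A''$.

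Now suppose $\alpha$ is not a loop, i.e.\ $\alpha\notin\Gamma_{\tor}$. Then $\M(\A'')$ is a closed codimension-$g$ submanifold of $\M(\A')$ and the normal bundle is trivial (trivialized by left translation by $G$), so a tubular neighborhood is $U\simeq\M(\A'')\times D^g$ and $U\cap\M(\A)\simeq\M(\A'')\times S^{g-1}$. Applying Mayer--Vietoris to $\M(\A')=\M(\A)\cup U$ yields
\begin{equation*}
\cdots\to H_n(\M(\A'')\times S^{g-1})\xrightarrow{\phi}H_n(\M(\A))\oplus H_n(\M(\A''))\xrightarrow{\psi}H_n(\M(\A'))\xrightarrow{\partial}\cdots.
\end{equation*}
By the inductive hypothesis (i) for $\A'$ combined with Lemma \ref{lem:surjmerid}, the composite $H_*^{\mer}(\A(G))\to H_*(\M(\A'))$ is surjective; hence so is $\psi$, forcing $\partial=0$ and reducing the above to a short exact sequence in every degree. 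Together with the K\"unneth identity $P_{\M(\A'')\times S^{g-1}}(t)=(1+t^{g-1})P_{\M(\A'')}(t)$ (valid also at $g=1$ where $S^0$ has $P=2$), this yields $P_{\M(\A)}(t)=P_{\M(\A')}(t)+t^{g-1}P_{\M(\A'')}(t)$, proving (iii); torsion-freeness of $H_*(\M(\A))$ follows from the SES and torsion-freeness of the other two terms.

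Statement (i) for $\A$ requires more work. Given $\sigma\in H_n(\M(\A),\Z)$, use Lemma \ref{lem:surjmerid} to find a meridian cycle of $\A$ whose image in $H_n(\M(\A'))$ agrees with that of $\sigma$; subtracting, it suffices to show that $\ker\bigl(H_n(\M(\A))\to H_n(\M(\A'))\bigr)$ is generated by meridian cycles. By the SES this kernel is the image under $\phi$ of $\ker(j_*)\subset H_n(\M(\A'')\times S^{g-1})$, where $j$ is the projection onto $\M(\A'')$, and a K\"unneth computation identifies $\ker(j_*)$ with $H_{n-g+1}(\M(\A''))\otimes[S^{g-1}]$. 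The main obstacle---the heart of the argument---is to verify that the image under $\phi$ of such a class is exactly $\mu_{L_0/L_1}^{1}$ applied to the corresponding cycle of $\A''$, by identifying the linking sphere $S^{g-1}$ in the tubular neighborhood with the small sphere used in the Gysin construction of meridian cycles, and carefully matching the connected components of $\M(\A'')$ with the appropriate rank-one layers. Since by the inductive hypothesis $H_{n-g+1}(\M(\A''))$ is itself generated by meridian cycles of $\A''$, this completes (i); statement (ii) is then immediate from (i) combined with Lemma \ref{lem:surjmerid}.
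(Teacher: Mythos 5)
Your proposal follows essentially the same strategy as the paper: apply Mayer--Vietoris to the covering $\M(\A')=U\cup\M(\A)$ by a tubular neighborhood $U$ of $\M(\A'')$, use Lemma \ref{lem:surjmerid} together with the inductive hypothesis for $\A'$ to force the long exact sequence to split into short exact sequences, and read off (iii), torsion-freeness, and (i) from there. The only presentational differences are that the paper inducts on $\#(\A\smallsetminus\A_{\tor})$ so the loop case is absorbed into the base case $\A=\A_{\tor}$ (avoiding the small extra argument you would need to transfer meridian cycles across the clopen decomposition), and it packages your explicit kernel computation for (i) as a diagram chase comparing the Mayer--Vietoris sequence with its subsequence of meridian submodules --- which is precisely the identification of $\phi$ on the K\"unneth factor with $\mu^1$ that you flag as the heart of the matter.
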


\begin{proof}
We first note that when $\alpha$ is a loop, 
$\M(\A')=\M(\A)\sqcup \M(\A'')$ is a decomposition into 
disjoint open subsets. Thus 
(iii) is obvious when $\alpha$ is a loop. 

We prove the other results by induction on $\#(\A\smallsetminus\A_{\tor})$. 
If $\A=\A_{\tor}$, then (i) follows from 
\[
H_*(\M(\A), \Z)=H_*^{\mer}(\A(G))=H_*^{\torus}(\A(G))
\]
(see \eqref{eq:torconn} in \S \ref{subsec:tcycle}),  
and there is nothing to prove for (ii) and (iii). 

Assume that 
$\A\smallsetminus\A_{\tor}\neq\emptyset$, and 
suppose that $\alpha\in\A\smallsetminus\A_{\tor}$. 
Let $U$ be a tubular neighborhood of $\M(\A'')$ in $\M(\A')$, as 
in \S \ref{subsec:tmeridian}. 
Set $U^*:=U\cap\M(\A)\simeq \M(\A'')\times D^{g*}$. 
Consider the Mayer-Vietoris sequence 
associated with the covering 
$\M(\A')=U\cup\M(\A)$. We have the following diagram: 
\begin{equation*}
%\label{eq:cd}
{\footnotesize 
\begin{CD}
@>>> H_k(U^*) @> f_k >> H_k(U)\oplus H_k(\M(\A)) @> g_k >> 
H_k(\M(\A'))@>>>\\
@. @AA h_1 A @AA h_2 A @AA h_3 A\\
@. H_k^{\mer}(U^*) @>f'_k>> H_k^{\mer}(U)\oplus H_k^{\mer}(\A(G))@> g'_k >> 
H_k^{\mer}(\A'(G)), 
\end{CD}
}
\end{equation*}
where $H_*^{\mer}(U^*)=H_*^{\mer}(\A''(G))\otimes H_*(D^{g*})$ and 
$H_k^{\mer}(U)\simeq H_k^{\mer}(\A''(G))$. 
The first line is a part of the Mayer-Vietoris long exact sequence. 
The vertical arrows $h_1, h_2$ and $h_3$ are the inclusion of the subgroup 
generated 
by meridian cycles. By the induction hypothesis, 
$h_1$ and $h_3$ are isomorphic. 
Lemma \ref{lem:surjmerid} implies that $g_k'$ is surjective. 
Hence, $g_k$ is also surjective. The surjectivity of $g_{k+1}$ 
implies that $f_k$ is injective. Therefore, the long exact sequence 
breaks into short exact sequences. 
The torsion freeness follows immediately. 
Thus 
\begin{equation*}
\rank H_k(U)+\rank H_k(\M(\A))=\rank H_k(U^*)+\rank H_k(\M(\A')), 
\end{equation*}
which implies the inductive formula (iii). 
A diagram chase shows that 
$h_2$ is also surjective. Hence 
$H_*(\M(\A), \Z)=H_*^{\mer}(\A(G))$. 
\end{proof}

If $G=(S^1)^p\times \R^q\times F$ as in the previous theorem, 
the Poincar\'e polynomial of $G$ is 
\begin{equation*}
P_G(t)=(1+t)^p\times\#F. 
\end{equation*}
We can compute the Poincar\'e polynomial of the complement $\M(\A)$ 
using $P_G(t)$ and the $G$-Tutte polynomial $T_{\A}^G(x, y)$. 

\begin{theorem}
\label{thm:poin}
Let $G$ be a non-compact abelian Lie group with finitely many 
connected components. Set $g=\dim G$. Then 
\begin{equation}
\label{eq:poin}
\begin{split}
P_{\M(\A)}(t)
&=
P_G(t)^{r_{\Gamma}-r_\A}\cdot t^{r_\A(g-1)}\cdot 
T_{\A}^{G}
\left(
\frac{P_G(t)}{t^{g-1}}+1, 0
\right)\\
&=
(-t^{g-1})^{r_{\Gamma}}\cdot
\chi_{\A}^G\left(
 -\frac{P_G(t)}{t^{g-1}}
\right). 
\end{split}
\end{equation}
\end{theorem}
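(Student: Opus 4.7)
The plan is to prove the second (equivalent) form
\[
P_{\M(\A)}(t) = (-t^{g-1})^{r_\Gamma}\,\chi_\A^G\!\left(-\frac{P_G(t)}{t^{g-1}}\right)
\]
by induction on $\#\A$, since the two displayed expressions in \eqref{eq:poin} coincide. First I would verify the equivalence of the two forms directly: substituting $t\mapsto -P_G(t)/t^{g-1}$ into the defining identity $\chi_\A^G(t) = (-1)^{r_\A}t^{r_\Gamma - r_\A}T_\A^G(1-t,0)$ and solving for $T_\A^G(\cdot,0)$, the factor $(-P_G(t)/t^{g-1})^{r_\Gamma-r_\A}$ absorbs the sign $(-1)^{r_\Gamma}$ and the appropriate power of $t^{g-1}$, producing exactly the prefactor $P_G(t)^{r_\Gamma-r_\A}\cdot t^{r_\A(g-1)}$ of the first form.

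For the base case $\A=\emptyset$, the decomposition $\Hom(\Gamma,G)\cong\Hom(\Gamma_{\tor},G_c)\times G^{r_\Gamma}$ yields $P_{\M(\emptyset)}(t) = \#\Hom(\Gamma_{\tor},G)\cdot P_G(t)^{r_\Gamma}$, while $\chi_\emptyset^G(t) = \#\Hom(\Gamma_{\tor},G)\cdot t^{r_\Gamma}$, and the two match after the substitution. For the inductive step I would fix $\alpha\in\A$ and combine Theorem \ref{thm:inductive}(iii) on the left-hand side with Corollary \ref{cor:del-cont2} on the right. If $\alpha$ is a loop, then $\alpha\in\Gamma_{\tor}$ forces $r_{\Gamma''}=r_\Gamma$, so the common prefactor $(-t^{g-1})^{r_\Gamma}$ lets the recurrences $P_{\M(\A)} = P_{\M(\A')}-P_{\M(\A'')}$ and $\chi_\A^G=\chi_{\A'}^G-\chi_{\A''}^G$ match term by term under the inductive hypothesis. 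If $\alpha$ is not a loop, then $r_{\Gamma''}=r_\Gamma-1$, and the hypothesis applied to $\A''$ gives
\[
t^{g-1}P_{\M(\A'')}(t) \;=\; -(-t^{g-1})^{r_\Gamma}\,\chi_{\A''}^G\!\left(-\frac{P_G(t)}{t^{g-1}}\right),
\]
which, combined with $P_{\M(\A')}(t) = (-t^{g-1})^{r_\Gamma}\chi_{\A'}^G(\,\cdot\,)$ via Theorem \ref{thm:inductive}(iii), yields $(-t^{g-1})^{r_\Gamma}[\chi_{\A'}^G-\chi_{\A''}^G](\,\cdot\,) = (-t^{g-1})^{r_\Gamma}\chi_\A^G(\,\cdot\,)$, as required.

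The main obstacle I anticipate is the bookkeeping in the non-loop case: the hard part will be checking that the factor $t^{g-1}$ appearing in Theorem \ref{thm:inductive}(iii) is precisely what compensates for the rank drop $r_{\Gamma''}=r_\Gamma-1$ on the algebraic side, so that the sign $(-1)^{r_\Gamma}$ and the power $t^{(g-1)r_\Gamma}$ conspire correctly between the two sides. All the genuine topological content has already been extracted in Theorem \ref{thm:inductive} (via the Mayer-Vietoris argument and the generation of $H_*(\M(\A),\Z)$ by meridian cycles), so once the recurrences are aligned this theorem follows as a clean algebraic consequence.
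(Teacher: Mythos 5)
Your proposal is correct and follows essentially the same route as the paper: the paper's proof is precisely "induction on $\#\A$, base case $\A=\emptyset$ is direct, inductive step combines Theorem~\ref{thm:inductive}(iii) with Corollary~\ref{cor:del-cont2}," with the details left to the reader. You have filled those details in accurately, including the sign/power bookkeeping showing that $r_{\Gamma''}=r_\Gamma-1$ in the non-loop case is exactly compensated by the $t^{g-1}$ factor in Theorem~\ref{thm:inductive}(iii), and the check that the two displayed forms are interchangeable via $\chi_\A^G(t)=(-1)^{r_\A}t^{r_\Gamma-r_\A}T_\A^G(1-t,0)$.
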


\begin{proof}
We prove the result by induction on $\#\A$. Suppose that $\A=\emptyset$. Then 
$\M(\A)=\Hom(\Gamma, G)\simeq \Hom(\Gamma_{\tor}, G)\times G^{r_\Gamma}$, 
and $\chi_{\A}^G(t)=\#\Hom(\Gamma_{\tor}, G)\times t^{r_\Gamma}$. 
The formula \eqref{eq:poin} follows immediately. 

Suppose $\A\neq\emptyset$. Then, using Corollary \ref{cor:del-cont2} and 
Theorem \ref{thm:inductive} (iii), 
Formula \eqref{eq:poin} can be proved by induction. 
\end{proof}

\begin{remark}
\label{rem:recovering}
Theorem \ref{thm:poin} recovers the known formulas 
\eqref{eq:bjorner} and \eqref{eq:torpoin}. 
\end{remark}

\begin{remark}
\label{rem:noncpt}
If $G$ is a compact group, then formula \eqref{eq:poin} does not hold 
unless $\A=\emptyset$. There are several steps that fail 
for compact groups. For example the surjectivity of torus cycles 
(Lemma \ref{lem:surjtorus}) fails, so the proof of the surjectivity of 
meridian cycles (Lemma \ref{lem:surjmerid}) does not work. 
Furthermore, the existence of the fundamental class is an obstruction for 
breaking the Mayer-Vietoris sequence into short exact sequences. 
\end{remark}

\section{Relationship with arithmetic matroids}

\label{sec:matroid}

In this section, we discuss the relationship between $G$-multiplicities 
and arithmetic matroid structures.

\subsection{Properties of $G$-multiplicities}

\newcommand{\scC}{\mathcal{C}}
\newcommand{\scR}{\mathcal{R}}

We summarize the construction of the dual of a representable 
arithmetic matroid. Let $\A=\{\alpha_1, \dots, \alpha_n\}$ be a 
finite list of elements in a finitely generated abelian group $\Gamma$. 
In \cite{dad-mo}, 
D'Adderio and Moci constructed 
another finitely generated abelian group $\Gamma^\dagger$ and a 
list $\A^\dagger=\{\alpha_1^\dagger, \dots, \alpha_n^\dagger\}$ of elements 
in $\Gamma^\dagger$ labelled by the same index set $[n]=\{1, \dots, n\}$ 
(see \cite[\S 3.4]{dad-mo} for details). Let us recall the construction 
briefly. Assume that $\Gamma$ can be expressed as 
$\Gamma=\Z^m/\langle\bm{v}_1, \dots, \bm{v}_h\rangle$. Choose representatives 
$\widetilde{\alpha}_i\in\Z^m$ of $\alpha_i\in\Gamma$. 
Define 
\begin{equation*}
\Gamma^\dagger:=\Z^{n+h}/
\langle {}^t(\widetilde{\alpha}_1, \dots, \widetilde{\alpha}_n, \bm{v}_1, \dots, \bm{v}_h)\rangle, 
\end{equation*}
where the denominator is the subgroup generated by $m$ columns 
of the $(n+h)\times m$ matrix 
${}^t(\widetilde{\alpha}_1, \dots, \widetilde{\alpha}_n, \bm{v}_1, \dots, \bm{v}_h)$. 
Let $\bm{e}_i$ be the standard basis of $\Z^{n+h}$. Set 
$\alpha_i^\dagger:=\overline{\bm{e}_i}\in\Gamma^\dagger$ for $i=1, \dots, n$. 
Now we have the list 
$\A^\dagger=\{\alpha_1^\dagger, \dots, \alpha_n^\dagger\}$. 
For a subset $S\subset [n]$, we have (\cite[\S 3.4]{dad-mo}) 
\begin{equation}
\label{eq:dual}
\begin{split}
r_S^\dagger
&=\#S-r_{[n]}+r_{S^c},\\
(
\Gamma^\dagger/
\langle\alpha_i^\dagger\mid i\in S\rangle
)_{\tor}
&\simeq
(
\Gamma/
\langle\alpha_i\mid i\in S^c\rangle
)_{\tor}, 
\end{split}
\end{equation}
where $S^c=[n]\smallsetminus S$, 
$r_S=\rank\langle\alpha_i\mid i\in S\rangle$ and 
$r_S^\dagger=\rank\langle\alpha_i^\dagger\mid i\in S\rangle$ 
(the second relation in \eqref{eq:dual} is not a 
canonical isomorphism). 
Note that $\A^\dagger$ has rank $r_{\A^\dagger}=\#\A-r_{\A}$. 

Let $G$ be a torsion-wise finite abelian group. 
Recall from Definition \ref{def:multiplicit} that 
$m(\scS; G):=\#\Hom\left((\Gamma/\langle\scS\rangle)_{\tor}, G\right)$ for any 
$\scS \subset \A$. 

Denote the $G$-multiplicity of $(\Gamma^\dagger, \A^\dagger)$ by 
\begin{equation*}
m^\dagger(S; G):=\#\Hom
\left(
(
\Gamma^\dagger/
\langle\alpha_i^\dagger\mid i\in S\rangle
)_{\tor}, G
\right). 
\end{equation*}
The second relation in \eqref{eq:dual} implies that 
\begin{equation*}
m^\dagger(S; G)=m(S^c; G). 
\end{equation*}
The operation $(-)^\dagger$ is reflexive in the sense that 
\begin{equation*}
%\label{eq:reflex}
\begin{split}
r_S&=\#S-r_{[n]}^\dagger+r_{S^c}^\dagger, \\
m(S; G)&=m^\dagger(S^c; G), 
\end{split}
\end{equation*}
and $G$-Tutte polynomials satisfy 
\begin{equation}
\label{eq:dualGTutte}
T_{\A^\dagger}^G(x, y)=T_{\A}^G(y, x). 
\end{equation}

\begin{theorem}
\label{thm:G-matroid}
The $G$-multiplicities satisfy the following four properties (we borrow 
the numbering from \cite[\S 2.3]{dad-mo}). 
\begin{itemize}
\item[(1)]  If $\scS \subset \A$ and $\alpha \in \A$ satisfy $r_{\scS \cup \{\alpha\}} = r_{\scS}$, then $m(\scS \cup \{\alpha\} ;G)$ divides $m(\scS ;G)$.

\item[(2)]  If $\scS \subset \A$ and $\alpha \in \A$ satisfy $r_{\scS \cup \{\alpha\}} = r_{\scS}+1$, then $m(\scS ;G)$ divides $m(\scS \cup \{\alpha\} ;G)$.

\item[(4)] If $\scS \subset  \scT \subset \A$ and $r_{\scS} = r_{\scT}$, then
\begin{equation*}
%\label{Axiom4}
\rho_{\scT}(\scS; G):= \sum_{\scS \subset  \underset{\bullet}{\scB} \subset \scT} (-1)^{\#\scB - \#\scS}m(\scB ;G) \ge 0.
\end{equation*}

\item[(5)] If $\scS \subset  \scT \subset \A$ and $r_{\scT} = r_{\scS}+\#(\scT\smallsetminus\scS)$, then
\begin{equation*}
\label{Axiom5}
\rho^*_{\scT}(\scS; G):= \sum_{\scS \subset  \underset{\bullet}{\scB} \subset \scT} (-1)^{\#\scT - \#\scB}m(\scB ;G) \ge 0.
\end{equation*}
\end{itemize}
Additionally, if $G$ is a (torsion-wise finite) divisible abelian group, 
that is, the multiplication-by-$k$ map $k:G\longrightarrow G$ is 
surjective for 
any positive integer $k$, then the $G$-multiplicities satisfy the following. 
\begin{itemize}
\item[(3)] 
If $\scS \subset  \scT \subset \A$ and $\scT$ is a disjoint union $\scT= \scS \sqcup \scB \sqcup \scC$ such that for all $\scS \subset \scR \subset  \scT$, we
have $r_{\scR}=r_{\scS} + \#(\scR \cap \scB)$, then
\begin{equation*}
%\label{Axiom3}
m(\scS ;G) \cdot m(\scT ;G) = m(\scS \sqcup \scB ;G) \cdot 
m(\scS \sqcup \scC ;G).
\end{equation*}
\end{itemize}
\end{theorem}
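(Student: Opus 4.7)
The proof proceeds axiom by axiom; the key tool throughout is the identification $m(\scR;G) = \#\Tor_1^{\Z}(\Gamma/\langle\scR\rangle, G)$ recorded after Definition \ref{def:multiplicit}, combined with short exact sequences obtained from comparing quotients of $\Gamma$. For axioms (1) and (2), I would work with the short exact sequence
\begin{equation*}
0 \to \langle\bar\alpha\rangle \to \Gamma/\langle\scS\rangle \to \Gamma/\langle\scS\cup\{\alpha\}\rangle \to 0.
\end{equation*}
In case (1), the element $\bar\alpha$ is torsion (since $r_{\scS\cup\{\alpha\}}=r_\scS$), so the projection restricts to a surjection of torsion subgroups; any lift of a torsion element is torsion because the finite kernel adds only torsion. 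Applying $\Hom(-,G)$ then gives a subgroup inclusion of finite abelian groups, and Lagrange's theorem yields $m(\scS\cup\{\alpha\};G)\mid m(\scS;G)$. In case (2), the kernel $\langle\bar\alpha\rangle$ is isomorphic to $\Z$, and the $\Tor$ long exact sequence (using $\Tor_1(\Z,G)=0$) reduces to
\begin{equation*}
0\to\Tor_1(\Gamma/\langle\scS\rangle,G)\to\Tor_1(\Gamma/\langle\scS\cup\{\alpha\}\rangle,G)\to G_0\to 0,
\end{equation*}
where $G_0\subseteq G$ is the kernel of the map $g\mapsto \bar\alpha\otimes g$ from $G=\Z\otimes G$ into $(\Gamma/\langle\scS\rangle)\otimes G$. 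The middle term is finite by torsion-wise finiteness, hence so is $G_0$, and counting orders gives $m(\scS;G)\mid m(\scS\cup\{\alpha\};G)$.

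For axiom (4), I first reduce to $\scS=\emptyset$ by passing to $(\Gamma/\langle\scS\rangle,\scT\setminus\scS)$; the hypothesis $r_\scS=r_\scT$ places the reduced list inside the torsion subgroup, so every $\scB$ in the reduced setting has $r_\scB=0$. In this setting $\rho_\scT(\emptyset;G)$ coincides with $T_{\scT}^G(x,0)$, and Theorem \ref{thm:finpos} identifies this specialization as $\#\M(\scT;\Gamma_{\tor},G)\ge 0$. Axiom (5) then follows by the duality in \eqref{eq:dual}: with $\scS^\dagger:=\scT^c$ and $\scT^\dagger:=\scS^c$ inside $\A^\dagger$, the identity $r^\dagger_S=\#S-r_{[n]}+r_{S^c}$ converts the rank hypothesis $r_\scT-r_\scS=\#(\scT\setminus\scS)$ of (5) exactly into $r^\dagger_{\scS^\dagger}=r^\dagger_{\scT^\dagger}$; using $m^\dagger(\scB^c;G)=m(\scB;G)$, the sum $\rho^*_\scT(\scS;G)$ reindexes to $\rho_{\scT^\dagger}(\scS^\dagger;G)$, which is non-negative by (4) applied to $\A^\dagger$.

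Axiom (3) is the delicate one and is the only place divisibility enters. After reducing to $\scS=\emptyset$, the rank hypotheses force $N:=\langle\scB\rangle$ to be free of rank $\#\scB$ and $T:=\langle\scC\rangle$ to lie in $\Gamma_{\tor}$, hence be finite; consequently $N\cap T=0$ and $\langle\scT\rangle=N\oplus T$. This produces the Mayer--Vietoris short exact sequence
\begin{equation*}
0\to\Gamma\to\Gamma/N\oplus\Gamma/T\to\Gamma/(N+T)\to 0,
\end{equation*}
with first map $x\mapsto(x+N,x+T)$ and second $(a+N,b+T)\mapsto(a-b)+(N+T)$. Its $\Tor$ long exact sequence carries a single obstruction, namely the kernel of $\Gamma\otimes G\to(\Gamma/N\oplus\Gamma/T)\otimes G$, equal to $\Im(N\otimes G)\cap\Im(T\otimes G)$ inside $\Gamma\otimes G$. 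Divisibility precisely kills this: writing $T\cong\bigoplus\Z/n_i\Z$ gives $T\otimes G=\bigoplus G/n_iG=0$ for divisible $G$. The sequence therefore collapses to
\begin{equation*}
0\to\Tor_1(\Gamma,G)\to\Tor_1(\Gamma/N,G)\oplus\Tor_1(\Gamma/T,G)\to\Tor_1(\Gamma/(N+T),G)\to 0,
\end{equation*}
and counting orders yields the multiplicative identity $m(\emptyset;G)\cdot m(\scT;G)=m(\scB;G)\cdot m(\scC;G)$. The principal technical obstacle of the whole theorem is this vanishing-of-obstruction step, which is exactly where divisibility becomes indispensable and which distinguishes (3) from the other four axioms.
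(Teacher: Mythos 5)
Your proof is correct, and for axioms (2) and (3) it takes a genuinely different route from the paper. Axioms (1), (4), (5) are handled in essentially the same spirit: for (1) you construct the torsion-subgroup epimorphism by hand from the short exact sequence rather than citing \cite[Lemma 5.2]{br-mo}; for (4) both arguments ultimately realize $\rho_{\scT}(\scS;G)$ as a cardinality via inclusion-exclusion (the paper works directly with the saturation $\Gamma'$ and Proposition \ref{prop:intersect}, whereas you reduce to $\scS=\emptyset$ and read off the constant term of $T_{\scT}^G(x,0)$ from Theorem \ref{thm:finpos}); and (5) is dual to (4) in both. For (2) the paper simply dualizes (1) via the construction of $\A^\dagger$, whereas you run the $\Tor$ long exact sequence of $0\to\langle\overline{\alpha}\rangle\to\Gamma/\langle\scS\rangle\to\Gamma/\langle\scS\cup\{\alpha\}\rangle\to 0$ and use $\Tor_1(\Z,G)=0$; this is more self-contained but gives nothing beyond what duality delivers for free. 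The interesting divergence is (3): the paper stitches together two short exact sequences of torsion subgroups (as in \cite[Lemma 2.6]{dad-mo}) via the isomorphism of \cite[Lemma 5.3]{br-mo} and then uses that $\Hom(-,G)$ is exact for injective (i.e.\ divisible) $G$. You instead reduce to $\scS=\emptyset$, use the rank hypotheses to split $\langle\scT\rangle=N\oplus T$ with $N$ free and $T$ finite torsion, form the single Mayer-Vietoris sequence $0\to\Gamma\to\Gamma/N\oplus\Gamma/T\to\Gamma/(N+T)\to 0$, and note that its $\Tor_1$ portion becomes short exact precisely because the obstruction $\Im(N\otimes G)\cap\Im(T\otimes G)$ vanishes, since $T\otimes G=0$ for divisible $G$. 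Both invocations of divisibility amount to the same fact (injectivity of $G$), but your version makes the obstruction explicit and pinpoints the exact step that fails for non-divisible $G$, which is a clean structural explanation for why (3) needs the extra hypothesis while the others do not, and it nicely matches the counter-example in Remark \ref{rem:counterex}.
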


\begin{proof}
Property (1) follows from the fact that there exists a group epimorphism $(\Gamma/\langle\scS\rangle)_{\tor} \longrightarrow (\Gamma/\langle\scS \cup\{\alpha\}\rangle)_{\tor}$ (\cite[Lemma 5.2]{br-mo}), and by applying the functor $\Hom(\textendash, G)$ to this epimorphism. 

By the above construction, $(r^\dagger, m^\dagger)$ satisfies (1), 
which is equivalent to property (2) for $(r, m)$. 

We prove (4) by showing that $\rho_{\scT}(\scS; G)$ is the cardinality of a 
certain finite set. 
Property (4) is clearly true if $\scS= \scT$, so assume that $\scS\subsetneq \scT$. Let us define $\Gamma'$ by 
\begin{equation*}
\Gamma':=\{g\in\Gamma\mid\exists n>0\mbox{ such that }n\cdot g\in\langle\scS\rangle\}. 
\end{equation*}
It is also characterized by 
$(\Gamma/\langle\scS\rangle)_{\tor}=\Gamma'/\langle\scS\rangle$. 
By the assumption $r_{\scS}=r_{\scT}$, we have 
$\scS\subset\scT\subset\Gamma'$. 
If $\scS\subset\scB\subset\scT$, we also have 
$(\Gamma/\langle\scB\rangle)_{\tor}=\Gamma'/\langle\scB\rangle$. 
Therefore, 
$\Hom((\Gamma/\langle\scB\rangle)_{\tor}, G)=
\Hom(\Gamma'/\langle\scB\rangle, G)$ 
can be considered as a subset of 
$\Hom((\Gamma/\langle\scS\rangle)_{\tor}, G)=
\Hom(\Gamma'/\langle\scS\rangle, G)$. 
By the principle of inclusion-exclusion and Proposition \ref{prop:intersect}, 
we have 
\begin{equation*}
\begin{split}
\rho_{\scT}(\scS; G)
&=
\sum_{\scS \subset  \underset{\bullet}{\scB} \subset \scT}
(-1)^{\#\scB-\#\scS}\cdot m(\scB; G)
\\
&=
\sum_{\scS \subset  \underset{\bullet}{\scB} \subset \scT}
(-1)^{\#\scB-\#\scS}\cdot\#\Hom(\Gamma'/\langle\scB\rangle, G)
\\
&=
\#\M(\scT/\scS; \Gamma'/\langle\scS\rangle, G), 
\end{split}
\end{equation*}
which is clearly non-negative.

We can prove (5) by an argument similar to that for (2) by using duality. 

Finally, to prove property (3) we generalize the argument used in \cite[Lemma 2.6]{dad-mo}. 
We consider the following diagram composing of two short exact sequences:
\begin{equation*}
%\label{eq:Axiom3}
{\footnotesize 
\begin{CD}
0 @>>> \left(\dfrac{\Gamma}{\langle \scS \sqcup \scC \rangle}\right)_{\tor} @>>> \left(\dfrac{\Gamma}{\langle \scT \rangle}\right)_{\tor}@>>> 
\ \left(\dfrac{\Gamma}{\langle \scT \rangle}\right)_{\tor}/\left(\dfrac{\Gamma}{\langle \scS \sqcup \scC \rangle}\right)_{\tor} 
@>>> 0\\
@. @. @. @AA \simeq A\\
0 @>>> \left(\dfrac{\Gamma}{\langle \scS \rangle}\right)_{\tor} @>>> \left(\dfrac{\Gamma}{\langle \scS \sqcup \scB \rangle}\right)_{\tor}@>>> \left(\dfrac{\Gamma}{\langle \scS \sqcup \scB \rangle}\right)_{\tor}/\left(\dfrac{\Gamma}{\langle \scS \rangle}\right)_{\tor} @>>> 0. 
\end{CD}
}
\end{equation*} 
(The isomorphism indicated by the vertical arrow is proved in 
\cite[Lemma 5.3]{br-mo}.) 
Since $G$ is divisible, $G$ is an injective $\Z$-module and the functor $\Hom(\textendash, G)$ is exact.
Applying the functor $\Hom(\textendash, G)$ to the diagram we obtain property (3).
\end{proof}

\begin{remark}
\label{rem:counterex}
When $G$ is a connected abelian Lie group, that is, $G=(S^1)^p\times\R^q$, $G$ is a torsion-wise finite and divisible group. 
It is easily seen that  property (3) fails in many cases. For example, let $\Gamma:=\Z^2, \scS:=\{(0,2)\}, \scB:=\{(2,1)\}, \scC:=\{(0,1)\}$ and $G:=\Z/2\Z$.
Then 
$
(\Gamma/\langle S\rangle)_{\tor}\simeq\Z/2\Z, 
(\Gamma/\langle S\cup B\rangle)_{\tor}\simeq\Z/4\Z, 
(\Gamma/\langle S\cup C\rangle)_{\tor}\simeq\{0\}, 
(\Gamma/\langle T\rangle)_{\tor}\simeq\Z/2\Z
$, 
and 
$m(\scS ;G) \cdot m(\scT ;G) = 4 \ne 2 = m(\scS \sqcup \scB ;G) \cdot m(\scS \sqcup \scC ;G).$
\end{remark}

\subsection{(Non-)positivity of coefficients} 
\label{subsec:nonpos}

As was proved in \cite[Theorem 3.5]{moci-tor}, the arithmetic Tutte polynomial 
$T_{\A}^{\arith}(x, y)$ is a polynomial with positive coefficients. 
In this section, we show that the $G$-Tutte polynomial has positive 
coefficients for some special cases. 
However, for a general group $G$, we show that the $G$-Tutte polynomial 
can have negative coefficients by exhibiting an explicit example. 

\begin{theorem}
Let $G$ be a torsion-wise finite divisible abelian group. Then the coefficients of the $G$-Tutte polynomial $T_{\A}^G(x, y)$ are positive integers.
\end{theorem}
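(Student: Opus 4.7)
The plan is to reduce the positivity statement to the known positivity theorem for arithmetic Tutte polynomials of abstract arithmetic matroids, by showing that under the divisibility hypothesis the $G$-multiplicity function supplies an arithmetic matroid structure on $\A$.

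First I would invoke Theorem \ref{thm:G-matroid}: the four properties (1), (2), (4), (5) always hold for $G$-multiplicities of a torsion-wise finite abelian group, and property (3) — the multiplicativity axiom on ``molecular'' decompositions — holds whenever $G$ is additionally divisible, because a divisible abelian group is an injective $\Z$-module, so the functor $\Hom(-,G)$ preserves short exact sequences and converts the diagram used in the proof into the desired multiplicative identity. Thus the triple $(\A, r, m(-;G))$ satisfies all five axioms of D'Adderio--Moci \cite{dad-mo}; that is, it is an arithmetic matroid in their sense.

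Next I would observe that the $G$-Tutte polynomial $T_{\A}^G(x,y)$, as defined in Definition \ref{def:main}, is exactly the arithmetic Tutte polynomial associated with the arithmetic matroid $(\A, r, m(-;G))$. The positivity statement then follows directly from the general positivity theorem of D'Adderio--Moci, which asserts that the arithmetic Tutte polynomial of \emph{any} arithmetic matroid (not merely a representable one) has non-negative integer coefficients, proved via a suitable notion of internal/external activity or, equivalently, a signed poset expansion.

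The main obstacle is purely bookkeeping: one must be sure that property (3) is really the precise form of axiom (A3) used in the arithmetic-matroid positivity theorem, and that the remaining axioms of \cite{dad-mo} are literally (1), (2), (4), (5). Granting this identification, the result is an immediate corollary; no further combinatorial work is required. It is worth noting that the argument breaks down precisely at property (3) when divisibility is dropped, which is consistent with Remark \ref{rem:counterex} and with the example of non-positivity presented in \S\ref{subsec:nonpos}, showing that divisibility is essentially the sharp hypothesis for the theorem.
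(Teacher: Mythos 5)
Your proposal is correct and matches the paper's argument: verify via Theorem \ref{thm:G-matroid} that divisibility makes the $G$-multiplicity function satisfy all five axioms of an arithmetic matroid, then apply the known positivity theorem for arithmetic Tutte polynomials. The only minor discrepancy is the attribution: the paper cites Br\"and\'en--Moci \cite[Theorem 4.5]{br-mo}, which establishes positivity even for the weaker notion of pseudo-arithmetic matroids, whereas you invoke the original D'Adderio--Moci positivity theorem for arithmetic matroids; both references suffice here.
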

\begin{proof}
When $G$ is a torsion-wise finite divisible group, the pair $(\Gamma, \A)$ together with the $G$-multiplicities form an arithmetic matroid. It is proved in \cite[Theorem 4.5]{br-mo} that the coefficients of the arithmetic Tutte polynomial of a pseudo-arithmetic matroid (and hence of an arithmetic matroid) are positive integers.
\end{proof}

\begin{proposition}
\label{prop:positivity}
Let $\Gamma$ be a finitely generated abelian group, and let $G$ be a 
torsion-wise finite group. 
\begin{itemize}
\item[(i)] 
If $\A\subset\Gamma$ consists of loops (i.e., $\A\subset
\Gamma_{\tor}$), then $T_{\A}^{G}(x, y)$ has positive coefficients. 
\item[(ii)] 
If $\A\subset\Gamma$ consists of coloops (i.e., $r_{\A}=\#\A$), 
then $T_{\A}^{G}(x, y)$ has positive coefficients. 
\end{itemize}
\end{proposition}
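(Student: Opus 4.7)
The plan is to dispatch (i) by direct appeal to Theorem \ref{thm:finpos} and then derive (ii) from (i) via the duality relation \eqref{eq:dualGTutte}.

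For part (i), the hypothesis $\A \subset \Gamma_{\tor}$ is precisely the hypothesis of Theorem \ref{thm:finpos}, whose conclusion already asserts positivity. Concretely, that theorem expresses the $k$-th coefficient of $T_{\A}^{G}(x,y)$ (viewed as a polynomial in $y$) as $\sum_{\#\scS=k}\#\M(\A/\scS;\Gamma_{\tor}/\langle\scS\rangle,G)$, which is manifestly a non-negative integer. No further work is needed.

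For part (ii), the key observation is that the coloop condition $r_{\A}=\#\A$ translates under the duality construction of D'Adderio--Moci into the loop condition for $\A^\dagger$. Indeed, by \eqref{eq:dual} the rank of $\A^\dagger$ in $\Gamma^\dagger$ is $r_{\A^\dagger}=\#\A-r_{\A}=0$, which forces $\A^\dagger \subset (\Gamma^\dagger)_{\tor}$. Part (i) applied to $(\Gamma^\dagger,\A^\dagger)$ therefore gives that $T_{\A^\dagger}^{G}(x,y)$ is a polynomial in $y$ with non-negative coefficients. The duality identity $T_{\A^\dagger}^{G}(x,y)=T_{\A}^{G}(y,x)$ from \eqref{eq:dualGTutte} then transfers this to the statement that $T_{\A}^{G}(x,y)$ is a polynomial in $x$ with non-negative coefficients, which is (ii).

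The main thing to verify carefully is that \eqref{eq:dualGTutte} is genuinely valid for every torsion-wise finite $G$. This rests on two ingredients already in place in the paper: the (non-canonical) isomorphism $(\Gamma^\dagger/\langle\alpha_i^\dagger\mid i\in S\rangle)_{\tor}\simeq(\Gamma/\langle\alpha_i\mid i\in S^c\rangle)_{\tor}$ from \eqref{eq:dual}, which yields $m^\dagger(S;G)=m(S^c;G)$ upon applying $\Hom(-,G)$ and taking cardinalities; and the rank identities $r_{[n]}^\dagger-r_S^\dagger=\#S^c-r_{S^c}$ and $\#S-r_S^\dagger=r_{[n]}-r_{S^c}$. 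Substituting these into the defining sum of $T_{\A^\dagger}^{G}$ and reindexing by $T=S^c$ produces $T_{\A}^{G}(y,x)$ term by term. This is a routine algebraic check rather than the main obstacle, so I would relegate it to a one-line verification; the substantive content of the proposition is really the reduction to Theorem \ref{thm:finpos} in both cases.
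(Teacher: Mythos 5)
Your proof is correct and follows the paper's argument exactly: part (i) is read off from Theorem \ref{thm:finpos}, and part (ii) reduces to (i) via the D'Adderio--Moci duality and the identity $T_{\A^\dagger}^G(x,y)=T_{\A}^G(y,x)$, using $r_{\A^\dagger}=\#\A-r_\A=0$ to see $\A^\dagger\subset(\Gamma^\dagger)_{\tor}$. The additional paragraph you include verifying \eqref{eq:dualGTutte} is a correct sanity check but not a deviation from the paper's route.
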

\begin{proof}
(i) follows immediately from Theorem \ref{thm:finpos}. 
(ii) follows immediately from (i) and 
\eqref{eq:dualGTutte} (note that if $\A$ consists of coloops, then 
$\A^\dagger$ consists of loops). 
\end{proof}
In general, the $G$-Tutte polynomial can have negative coefficients 
as in the next example. 
\begin{example}
\label{ex:negative}
Let $\Gamma=\Z\oplus\Z/4\Z$, let $\A=\{\alpha, \beta\}$ with 
$\alpha=(2, \overline{1})$ and $\beta=(0, \overline{2})\in\Gamma$, and 
let $G=\Z/4\Z$. Then by direct computation, we have 
\begin{equation*}
T_{\A}^G(x, y)=2xy+2x+2y-2. 
\end{equation*}
This also produces a counter-example to axiom (P) of 
Br\"and\'en-Moci \cite[\S 2]{br-mo}. With notation 
in \cite[\S 2]{br-mo}, $[\emptyset, \A]$ is a molecule, and 
\begin{equation*}
\rho(\emptyset, \A; G)=(-1)\cdot\sum_{\scB\subset\A}(-1)^{2-\#\scB}
m(\scB; G)=-2<0. 
\end{equation*}
Therefore, in this case, the multiplicity 
$m(\scB; G)$ ($\scB\subset\A$) does not form a pseudo-arithmetic matroid 
in the sense of Br\"and\'en-Moci \cite[\S 2]{br-mo}. 
\end{example}

\subsection{Convolution formula}

\label{subsec:convol}

The following result is the $G$-Tutte polynomial version of 
the so-called convolution formula \cite{eti-ver, krs}. 

\begin{theorem}
\label{thm:convol}
Let $\A\subset\Gamma$ be a list in a finitely generated group $\Gamma$, and 
let $G_1$ and $G_2$ be torsion-wise finite groups. Then 
\begin{equation}
\label{eq:convol}
T_{\A}^{G_1\times G_2}(x, y)=
\sum_{\underset{\bullet}{\scB}\subset\A}
T_{{\scB}}^{G_1}(0, y)\cdot
T_{\A/\scB}^{G_2}(x, 0). 
\end{equation}
\end{theorem}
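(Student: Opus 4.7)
The plan is to expand both sides using the definition of the $G$-Tutte polynomial (Definition \ref{def:main}), exploit the multiplicativity $m(\scS;G_1\times G_2)=m(\scS;G_1)\cdot m(\scS;G_2)$ that follows from Proposition \ref{prop:finitetf}(1) applied to $(\Gamma/\langle\scS\rangle)_{\tor}$, and then reduce the identity to a standard binomial vanishing of the form $(1-1)^n$.

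First I will rewrite the left-hand side as
\[
T_{\A}^{G_1\times G_2}(x,y)=\sum_{\scS\subset\A}m(\scS;G_1)\,m(\scS;G_2)(x-1)^{r_\A-r_\scS}(y-1)^{\#\scS-r_\scS}.
\]
For the right-hand side I treat the two factors separately. By definition,
\[
T_{\scB}^{G_1}(0,y)=\sum_{\scS\subset\scB}m(\scS;G_1)(-1)^{r_\scB-r_\scS}(y-1)^{\#\scS-r_\scS}.
\]
For $T_{\A/\scB}^{G_2}(x,0)$ I will use the natural bijection between sublists $\overline{\scC}$ of $\A/\scB$ and sublists $\scT$ of $\A$ containing $\scB$, with $\scC=\scT\smallsetminus\scB$. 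Under this bijection,
\[
\#\overline{\scC}=\#\scT-\#\scB,\quad r_{\overline{\scC}}=r_\scT-r_\scB,\quad r_{\A/\scB}=r_\A-r_\scB,\quad m(\overline{\scC};G_2)=m(\scT;G_2),
\]
the last equality because $(\Gamma/\langle\scB\rangle)/\langle\overline{\scC}\rangle\simeq\Gamma/\langle\scT\rangle$. This yields
\[
T_{\A/\scB}^{G_2}(x,0)=\sum_{\scB\subset\scT\subset\A}m(\scT;G_2)(x-1)^{r_\A-r_\scT}(-1)^{\#\scT-\#\scB-r_\scT+r_\scB}.
\]

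Next I interchange the order of summation. Writing the RHS as a sum over triples $\scS\subset\scB\subset\scT\subset\A$ and collecting the terms that do not depend on $\scB$, I obtain
\[
\mathrm{RHS}=\sum_{\scS\subset\scT\subset\A}m(\scS;G_1)\,m(\scT;G_2)(y-1)^{\#\scS-r_\scS}(x-1)^{r_\A-r_\scT}\cdot C_{\scS,\scT},
\]
where
\[
C_{\scS,\scT}=\sum_{\scS\subset\underset{\bullet}{\scB}\subset\scT}(-1)^{r_\scB-r_\scS+\#\scT-\#\scB-r_\scT+r_\scB}.
\]
Since $2r_\scB$ is even, the exponent simplifies modulo $2$ to $r_\scS+r_\scT+\#\scT+\#\scB$, so
\[
C_{\scS,\scT}=(-1)^{r_\scS+r_\scT+\#\scT}\sum_{\scS\subset\underset{\bullet}{\scB}\subset\scT}(-1)^{\#\scB}
=(-1)^{r_\scS+r_\scT+\#\scT+\#\scS}(1-1)^{\#\scT-\#\scS},
\]
which vanishes whenever $\scS\subsetneq\scT$ and equals $1$ when $\scS=\scT$. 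Substituting $C_{\scS,\scT}=\delta_{\scS,\scT}$ leaves exactly the left-hand side.

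The main obstacle is bookkeeping: one must handle the list/multiset conventions carefully (so that sublists with repeated elements are counted with the correct multiplicities and so that the binomial coefficients $\binom{\#\scT-\#\scS}{\#\scB-\#\scS}$ are the right ones), and one must keep track of the signs coming from the two evaluations $x\mapsto 0$ and $y\mapsto 0$. Once the parity reduction $2r_\scB\equiv 0\pmod 2$ is used, everything collapses to the binomial identity above and the identification $m(\overline{\scC};G_2)=m(\scT;G_2)$ supplied by the contraction isomorphism.
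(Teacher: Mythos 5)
Your proof is correct and follows essentially the same route as the paper's: expand both sides via Definition \ref{def:main}, reindex $T_{\A/\scB}^{G_2}(x,0)$ as a sum over $\scT\supset\scB$ using the contraction isomorphism, reorganize as a triple sum over $\scS\subset\scB\subset\scT$, and kill all cross terms with $\scS\subsetneq\scT$ via the binomial identity $\sum_{\scS\subset\scB\subset\scT}(-1)^{\#\scB}=0$, leaving exactly the LHS by the multiplicativity $m(\scS;G_1\times G_2)=m(\scS;G_1)m(\scS;G_2)$. The only cosmetic difference is that you isolate the sign sum into the coefficient $C_{\scS,\scT}$ and spell out the parity reduction $2r_\scB\equiv 0$, which the paper leaves implicit.
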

\begin{proof}
The right-hand side of the formula is equal to 
\begin{equation*}
\begin{split}
&
\sum_{\underset{\bullet}{\scB}\subset\A}
\left\{
\sum_{\underset{\bullet}{\scS}\subset\scB}
m(\scS; G_1)(-1)^{r_{\scB}-r_{\scS}}(y-1)^{\#\scS-r_{\scS}}
\right\}\\
&
\times\left\{
\sum_{\scB\subset\underset{\bullet}{\scT}\subset\A}
m(\scT; G_2)(x-1)^{r_{\A}-r_{\scB}-(r_{\scT}-r_{\scB})}
(-1)^{\#\scT-\#\scB-(r_{\scT}-r_{\scB})}
\right\}\\
=&
\sum_{\underset{\bullet}{\scS}\subset
\underset{\bullet}{\scB}\subset
\underset{\bullet}{\scT}\subset\A}
m(\scS; G_1)m(\scT; G_2)
(x-1)^{r_{\A}-r_{\scT}}(y-1)^{\#\scS-r_{\scS}}
(-1)^{\#\scT-\#\scB-r_{\scT}-r_{\scS}}\\
=&
\sum_{\underset{\bullet}{\scS}=
\underset{\bullet}{\scB}=
\underset{\bullet}{\scT}\subset\A}
m(\scS; G_1)m(\scS; G_2)
(x-1)^{r_{\A}-r_{\scS}}(y-1)^{\#\scS-r_{\scS}}\\
&
+
\sum_{\underset{\bullet}{\scS}\subsetneq
\underset{\bullet}{\scT}\subset\A}
\left\{
m(\scS; G_1)m(\scT; G_2)
(x-1)^{r_{\A}-r_{\scT}}(y-1)^{\#\scS-r_{\scS}}
\sum_{{\scS}\subset
\underset{\bullet}{\scB}
\subset\scT}
(-1)^{\#\scT-\#\scB-r_{\scT}-r_{\scS}}
\right\}. 
\end{split}
\end{equation*}
The first term is 
equal to $T_{\A}^{G_1\times G_2}(x, y)$ from 
the multiplicativity $m(\scS; G_1\times G_2)=
m(\scS; G_1)m(\scS; G_2)$ (see Proposition \ref{prop:finitetf}). 
The second term vanishes because, when 
$\scS\subsetneq\scT$, we have 
$\sum_{{\scS}\subset
\underset{\bullet}{\scB}
\subset\scT}
(-1)^{\#\scB}=0$. 
\end{proof}

The classical convolution formula \cite{eti-ver, krs} for matroids 
representable over $\Q$ is obtained from Theorem \ref{thm:convol} by 
replacing $G_1$ and $G_2$ by $\{0\}$. 
Theorem \ref{thm:convol} can also be specialized to the 
Backman-Lenz \cite{ba-le} convolution formula 
when $G_1\times G_2=S^1 \times \{0\}$ or $\{0\}\times S^1$.

\medskip

\noindent
\textbf{Acknowledgements:} 
YL is partially supported by JSPS KAKENHI Grant Number 16J00125. 
TNT gratefully acknowledges the support of the scholarship program of 
the Japanese Ministry of Education, Culture, Sports, Science, and Technology 
(MEXT) under grant number 142506. 
MY is partially supported by 
JSPS KAKENHI Grant Number JP15KK0144, JP16K13741, JP18H01115 
and Humboldt Foundation. 
The authors thank Emanuele Delucchi for sharing 
observations on arithmetic Tutte and characteristic quasi-polynomials, 
Luca Moci for suggestions concerning the convolution formula, and 
Roberto Pagaria for pointed out some mistakes in the previous version. 
The authors also thank ALTA group in Bremen University for their hospitality 
where part of this work was done.


\begin{thebibliography}{999}

\bibitem{ar-ca-he}
F. Ardila, F. Castillo, M. Henley, 
The arithmetic Tutte polynomials of the classical root systems. 
\emph{Int. Math. Res. Not.} \textbf{IMRN 2015}, no. 12, 3830-3877. 

\bibitem{ath-adv}
C. A. Athanasiadis, 
Characteristic polynomials of subspace arrangements and finite fields. 
\emph{Adv. Math.} \textbf{122} (1996), no. 2, 193--233. 

\bibitem{ba-le}
S. Backman, M. Lenz, 
A convolution formula for Tutte polynomials of arithmetic matroids and 
other combinatorial structures. Preprint, 
arXiv:1602.02664

\bibitem{bbc}
C. Bajo, B. Burdick, S. Chmutov, 
On the Tutte-Krushkal-Renardy polynomial for cell complexes. 
\emph{J. Combin. Theory Ser. A} \textbf{123} (2014), 186-201.

\bibitem{bpr}
S. Basu, R. Pollack, M. -F. Roy, 
Algorithms in real algebraic geometry. Second edition. 
Algorithms and Computation in Mathematics, 10. 
\emph{Springer-Verlag, Berlin}, 2006. x+662 pp.

\bibitem{bergv}
O. Bergvall, 
Cohomology of Complements of Toric Arrangements Associated to Root Systems. 
Preprint, arXiv:1601.01857 

\bibitem{bibby}
C. Bibby, 
Cohomology of abelian arrangements. 
\emph{Proc. Amer. Math. Soc.} \textbf{144} (2016), no. 7, 3093-3104. 

\bibitem{bjo}
A. Bj\"orner, 
Subspace arrangements. 
\emph{First European Congress of Mathematics, Vol. I (Paris, 1992)}, 321-370, 
Progr. Math., 119, Birkh\"auser, Basel, 1994. 

\bibitem{bl-sa}
A. Blass, B. Sagan, 
Characteristic and Ehrhart polynomials. 
\emph{J. Algebraic Combin.} \textbf{7} (1998), no. 2, 115-126. 

\bibitem{br-mo}
P. Br\"and\'en, L. Moci, 
The multivariate arithmetic Tutte polynomial. 
\emph{Trans. Amer. Math. Soc.} \textbf{366} (2014), no. 10, 5523-5540. 

\bibitem{cddmp}
F. Callegaro, M. D'Adderio, E. Delucchi, L. Migliorini, R. Pagaria, 
Orlik-Solomon-type presentations for the cohomology algebra of toric arrangements. 
Preprint, arXiv:1806.02195v1

%\bibitem{cal-del}
%F. Callegaro, E. Delucchi, 
%The integer cohomology algebra of toric arrangements. 
%\emph{Advances in Mathematics}, 
%\textbf{313}, 746-802. 

\bibitem{car-spo}
S. Caracciolo, A. Sportiello, 
General duality for abelian-group-valued statistical-mechanics models. 
\emph{J. Phys. A} \textbf{37} (2004), no. 30, 7407-7432. 

\bibitem{cos-ras}
M. Coste, Real Algebraic Sets. 
\emph{Arc spaces and additive invariants in real algebraic and analytic 
geometry}, 1-32, Panor. Synth\`eses, 24, Soc. Math. France, Paris, 2007.

\bibitem{dad-mo}
M. D'Adderio, L. Moci, 
Arithmetic matroids, the Tutte polynomial and toric arrangements. 
\emph{Adv. in Math.} \textbf{232} (2013) 335-367. 

\bibitem{dec-pro}
C. De Concini, C. Procesi, 
On the geometry of toric arrangements. 
\emph{Transform. Groups} \textbf{10} (2005), no. 3-4, 387-422. 

\bibitem{de-mo}
E. Delucchi, L. Moci, 
Colorings and flows on CW complexes, Tutte quasi-polynomials and 
arithmetic matroids. 
Preprint, arXiv:1602.04307

\bibitem{de-ri}
E. Delucchi, S. Riedel, 
Group actions on semimatroids. 
Preprint, arXiv:1507.06862

\bibitem{eti-ver}
G. Etienne, M. Las Vergnas, 
External and internal elements of a matroid basis. 
\emph{Discrete Math.} \textbf{179} (1998), no. 1-3, 111-119.

\bibitem{fink-moci}
A. Fink, L. Moci, 
Matroids over a ring. 
\emph{J. Eur. Math. Soc. } \textbf{18} (2016), no. 4, 681-731. 

\bibitem{gor-mac}
M. Goresky, R. MacPherson, 
Stratified Morse Theory, in: Ergeb. Math. Grenzgeb., Vol. 14, 
Springer-Verlag, Berlin, 1988.

\bibitem{hum}
J. E. Humphreys, 
Reflection groups and Coxeter groups. 
Cambridge Studies in Advanced Mathematics, 29. 
\emph{Cambridge University Press, Cambridge}, 1990. xii+204 pp.

\bibitem{ktt-cent}
H. Kamiya, A. Takemura, H. Terao, 
Periodicity of hyperplane arrangements with integral coefficients 
modulo positive integers. 
\emph{J. Algebraic Combin.} \textbf{27} (2008), no. 3, 317--330. 

\bibitem{ktt-quasi}
H. Kamiya, A. Takemura, H. Terao, 
The characteristic quasi-polynomials of the arrangements of 
root systems and mid-hyperplane arrangements. 
\emph{Arrangements, local systems and singularities}, 177--190, 
\textbf{Progr. Math., 283}, \emph{Birkh\"auser Verlag, Basel}, 2010. 

\bibitem{krs}
W. Kook, V. Reiner, D. Stanton, 
A convolution formula for the Tutte polynomial. 
\emph{J. Combin. Theory Ser. B} \textbf{76} (1999), no. 2, 297-300.

\bibitem{leh-tor}
G. I. Lehrer, 
A toral configuration space and regular semisimple conjugacy classes. 
\emph{Math. Proc. Cambridge Philos. Soc.} \textbf{118} (1995), no. 1, 105-113. 

\bibitem{loo-coh}
E. Looijenga, 
Cohomology of $\mathcal{M}_3$ and $\mathcal{M}_3^1$. 
\emph{Mapping class groups and moduli spaces of Riemann surfaces 
(G\"ottingen, 1991/Seattle, WA, 1991)}, 205-228, 
Contemp. Math., 150, Amer. Math. Soc., Providence, RI, 1993.

\bibitem{moci-lincei}
L. Moci, 
Combinatorics and topology of toric arrangements defined by root systems. 
\emph{Atti Accad. Naz. Lincei Rend. Lincei Mat. Appl.} \textbf{19} (2008), 
no. 4, 293-308. 

\bibitem{moci-tor}
L. Moci, 
A Tutte polynomial for toric arrangements. 
\emph{Trans. Amer. Math. Soc.} \textbf{364} (2012), no. 2, 1067-1088.

\bibitem{os}
P. Orlik, L. Solomon, 
Combinatorics and topology of complements of hyperplanes. 
\emph{Invent. Math.} {\bf 56} (1980), 167--189. 

\bibitem{ot} 
P. Orlik, H. Terao, Arrangements of hyperplanes. 
Grundlehren der Mathematischen Wissenschaften, 300. 
Springer-Verlag, Berlin, 1992. xviii+325 pp.

\bibitem{oxley}
J. Oxley, 
Matroid theory. Second edition. Oxford Graduate Texts in Mathematics, 21. 
Oxford University Press, Oxford, 2011. xiv+684 pp. 

\bibitem{sokal}
A. D. Sokal, 
The multivariate Tutte polynomial (alias Potts model) for graphs and matroids. 
\emph{Surveys in combinatorics 2005}, 173-226, 
London Math. Soc. Lecture Note Ser., 327, 
Cambridge Univ. Press, Cambridge, 2005. 

\bibitem{sut}
R. Suter, 
The number of lattice points in alcoves and the exponents of the 
finite Weyl groups. 
\emph{Math. Comp.} \textbf{67} (1998), no. 222, 751--758. 

\bibitem{thi}
M. B. Thistlethwaite, 
A spanning tree expansion of the Jones polynomial. 
\emph{Topology} \textbf{26} (1987), no. 3, 297-309. 

\bibitem{welsh}
D. J. A. Welsh, 
Complexity: knots, colourings and counting. 
London Mathematical Society Lecture Note Series, 186. 
Cambridge University Press, Cambridge, 1993. viii+163 pp.

\bibitem{yos-wor}
M. Yoshinaga, 
Worpitzky partitions for root systems and characteristic quasi-polynomials. 
\emph{Tohoku Mathematical Journal} \textbf{70} (2018) 39-63. 
%(arXiv:1501.04955) 
%To appear in \emph{Tohoku Math. J.} 

\bibitem{yos-linial}
M. Yoshinaga, 
Characteristic polynomials of Linial arrangements for exceptional 
root systems. 
\emph{Journal of Combinatorial Theory, Series A}. 
\textbf{157} (2018) 267-286. 
%Preprint, arXiv:1610.07841 

\bibitem{zas-face}
T. Zaslavsky, 
Facing up to arrangements: Face-count formulas for 
partitions of space by hyperplanes. 
\emph{Memoirs Amer. Math. Soc.} 1 (1975), no. 154, vii+102 pp. 

\end{thebibliography}
\end{document}